\newtheorem{theorem}{\rm \bf Theorem}[section]
\newtheorem{proposition}[theorem]{  \rm \bf Proposition}
\newtheorem{corollary}[theorem]{\rm \bf Corollary}
\newtheorem{lemma}[theorem]{\rm \bf Lemma}
\newtheorem*{mtheorem}{Main Theorem }
\theoremstyle{definition}
\newtheorem{defi}{Definition}
\newcommand{\Teich}{\ Teichm\"uller space\ }
\renewcommand{\remark}{\textbf{Remark}\quad   }
\newcommand{\Type}{\textbf{Type} }
\newcommand{\ML}{\mathcal{ML}(S)}
\newcommand{\R}{\mathbb{R}}
\newcommand{\C}{\mathcal{C}}
\title{ the Hilbert metric on Teichm\"uller space and  Earthquake}
\date{October 10, 2015}
\author{Huiping Pan}
\address{School of Mathematics and Computational Science, Sun Yat-Sen University, 510275, Guangzhou, P. R. China}
\email{ chnpanhp@foxmail.com}
 \thanks{The
  author is partially supported by NSFC, No: 11271378.}
\begin{document}

\synctex=1
\begin{abstract}
Hamenst\"adt  gave a  parametrization of the Teichm\"uller space of punctured surfaces  such that the image under this parametrization is the interior of a polytope.  In this paper, we study the Hilbert metric on the Teichm\"uller space of punctured surfaces based on this parametrization. We prove that every earthquake ray is an almost geodesic under the Hilbert metric.
\bigskip

\noindent AMS Mathematics Subject Classification (2010):    Primary 51F99; Secondary 57N16; 52B70.
\medskip

\noindent Keywords:  Teichm\"uller space, Hilbert metric, earthquake, mapping class group.
\end{abstract}
\maketitle
\section{introduction}
Let $S_{g,n}$ be an orientable surface of  genus $g$ with $n$ punctures. In this paper, we  consider the surfaces of negative Euler characteristic with at least one puncture. A marked hyperbolic structure on $S_{g,n}$ is pair $(X,f)$ where $X$ is a complete hyperbolic metric on a surface $S$ and $f:S_{g,n}\to S$ is a homeomorphism. Two marked hyperbolic structures $(X_1,f_1)$ and $(X_1,f_2)$ are called equivalent if there is an isometry in the isotopy class of $f_1^{-1}\circ f_2$. For simplicity, we usually denote a marked hyperbolic metric by $X$ instead of the pair $(X,f)$.   The Teichm\"uller space $T_{g,n}$ is defined as the space of  equivalent classes of marked hyperbolic structures on $S_{g,n}$. It is well known that $T_{g,n}$, equipped with the natural topology is homeomorphic to a ball in $\mathbb R^{6g-6+2n}$.

 Given an open  convex domain $D\subset \mathbb R^m$, Hilbert defined a natural metric on $D$, now called the Hilbert metric,  such that the straight line segments are geodesic segments under this metric.    In ~\cite{Papa2}, Papadopoulos raised a problem: \lq\lq Realize Teichm\"uller space as a bounded convex set somewhere and study the Hilbert metric on it \rq\rq. In this paper, we will study this problem.

 For the case of closed surfaces,   Yamada \cite{Yamada1} constructed a space which he called the Teichm\"uller-Coxeter complex   within which the original Teichm\"uller space sits as an open convex but unbounded subset,  based on the Weil-Petersson completion of the Teichm\"uller space. Then in \cite{Yamada2}, after  introducing a new variational characterization  of the Hilbert metric,  he defined  the Wei-Petersson Hilbert metric on the Teichm\"uller space, where the background geometry is the one induced by the Weil-Petersson geometry instead of the Euclidean geometry.

 For the case of punctured surfaces, Hamenst\"adt \cite{Ham1} provided a parametrization of the Teichm\"uller space into $\mathbb RP^{6g-6+2n}$ by length functions such that the image of $T_{g,n}$ is the interior of a finite-sided polyhedron (see \S\ref{sec:geometri para}). Therefore the Hilbert metric is well defined on $T_{g,n}$. Hamenst\"adt's parametrization depends on the choice of a preferred triangulation $\Gamma$ of $S_{g,n}$. More precisely, fix a puncture of $S_{g,n}$ and denote it as $O$, let $\Gamma=\{\eta_1,\eta_2,...,\eta_{6g-5+2n}\}$ be a set of bi-infinite simple curves on $S_{g,n}$ such that for any marked hyperbolic metric $X$ the two ends of $\eta_i$, $i=1,2,...,6g-5+2n$, go into the puncture $O$ and such that $S_{g,n}\backslash \Gamma$ consists of $4g-3+n$ ideal triangles and $n-1$ once punctured discs. Such a set $\Gamma$ is called a \textit{preferred triangulation} of $S_{g,n}$. There are countably many choices of preferred triangulations.

 In this paper, we study the Hilbert metric $d^{\Gamma}_h$ on $T_{g,n}$ based on Hament\"adt's parametrization.
 Before stating our main result, we briefly explain a deformation of hyperbolic metric introduced by Thurston in \cite{Th1}, namely, the \textit{earthquake}.

 Let $\alpha$ be a simple closed curve on $S_{g,n}$, and $X\in T_{g,n}$ be a marked hyperbolic metric.  Denote by $\alpha^*$ the  geodesic representative of  $\alpha$ on $X$.  Cutting $X$ along $\alpha^*$ and twisting to the left about distance $t$, we obtain a new marked hyperbolic metric, denoted by $\mathcal E^t_\alpha X$. Note that the notion of ``left'' twist depend only on the orientation of $X$ (no orientation of $\alpha^*$ is necessary).
 Thurston extended this construction to any measured geodesic lamination. He proved the following result, one of whose proof can be found in \cite{Ke}.

\begin{proposition}\label{prop:earthquake}
  There is a (unique) continuous map $\mathcal{ML}\times \mathbb R \times T_{g,n}\to T_{g,n}$, associating an element $\mathcal E^t_\alpha X\in T_{g,n}$ to $(\alpha,t,X)$, such that $\mathcal E^t_{\lambda\alpha}X=\mathcal E^{\lambda t}_\alpha X$ for all $\lambda>0$ and all $\alpha\in \mathcal {ML}$, and such that when $\alpha$ is a simple closed geodesic, $\mathcal E^t_\alpha X$ is obtained from $X$ by the earthquake defined above.
\end{proposition}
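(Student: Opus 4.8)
The plan is to work in the universal cover and build the earthquake as a deformation of the holonomy, first on the dense set of weighted simple closed curves (where the cut-and-twist construction described above already supplies the answer) and then extending by continuity. Fix a Fuchsian uniformization $X=\mathbb{H}^2/\Gamma$. A measured lamination $\alpha\in\mathcal{ML}$ lifts to a $\Gamma$-invariant measured geodesic lamination $\tilde\alpha$ on $\mathbb{H}^2$, whose leaves and complementary regions decompose $\mathbb{H}^2$ into strata. For a real parameter $t$ I would define the left earthquake map $E^t_{\tilde\alpha}\colon\mathbb{H}^2\to\mathbb{H}^2$ by declaring it to be the restriction of an isometry on each stratum and prescribing, for any two strata $A,B$, that the comparison isometry $(E^t_{\tilde\alpha}|_B)\circ(E^t_{\tilde\alpha}|_A)^{-1}$ be the left hyperbolic translation along a geodesic weakly separating $A$ from $B$, of translation length equal to $t$ times the total transverse measure between the two strata. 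One checks the cocycle condition making this consistent, verifies $\Gamma$-equivariance for a conjugated representation $\Gamma^t$, and concludes that $E^t_{\tilde\alpha}$ descends to a hyperbolic structure $\mathcal{E}^t_\alpha X=\mathbb{H}^2/\Gamma^t$; when $\alpha$ is a weighted simple closed curve this reproduces the geometric earthquake.

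Two of the required properties then fall out of the definition. The homogeneity $\mathcal{E}^t_{\lambda\alpha}X=\mathcal{E}^{\lambda t}_\alpha X$ holds because replacing $\alpha$ by $\lambda\alpha$ multiplies every transverse measure by $\lambda$, so the prescribed shear $t\cdot(\text{transverse measure})$ is unchanged precisely when $t$ is simultaneously replaced by $t/\lambda$. Uniqueness is reduced to the continuity claim: since weighted simple closed curves are dense in $\mathcal{ML}$ and the cut-and-twist description pins down $\mathcal{E}^t_\alpha X$ on this dense set, any two continuous maps satisfying the stated properties must agree everywhere.

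The technical heart is therefore establishing continuity of $(\alpha,t,X)\mapsto\mathcal{E}^t_\alpha X$, which I would do by approximating an arbitrary $\alpha$ by weighted multicurves $\alpha_n\to\alpha$ and showing that the corresponding earthquake maps converge. The key point is that the comparison isometry between two fixed strata depends only on the total transverse measure separating them, and this quantity varies continuously in the weak-$\ast$ topology on $\mathcal{ML}$; this lets me control the distance between $E^t_{\tilde\alpha_n}(z)$ and $E^t_{\tilde\alpha}(z)$ uniformly on compact subsets of $\mathbb{H}^2$, even though the combinatorial pattern of strata changes discontinuously. Uniform convergence on compacta yields convergence of the induced boundary homeomorphisms of $S^1$, hence of the conjugated Fuchsian groups and of the points $\mathcal{E}^t_{\alpha_n}X$ in $T_{g,n}$; joint continuity in $(\alpha,t,X)$ follows from the same estimates once the uniformization $X=\mathbb{H}^2/\Gamma$ is allowed to vary continuously.

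I expect the main obstacle to be exactly this last step: controlling the earthquake map under perturbation of $\alpha$ when the stratification itself is unstable. The way around it is to avoid tracking individual strata and instead work with the shear data encoded by the transverse measure, which is continuous, together with a priori bounds (from the finite total mass of $\alpha$ through a compact region and the discreteness of $\Gamma$) guaranteeing that the boundary extensions stay equicontinuous. Once equicontinuity is in hand, an Arzel\`a--Ascoli argument closes the existence-and-continuity step, and uniqueness follows as above.
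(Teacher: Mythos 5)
The paper offers no proof of this proposition at all: it is quoted as Thurston's theorem with the proof deferred to \cite{Th1} and \cite{Ke}, and your sketch follows exactly that standard construction (lift to $\mathbb{H}^2$, strata and comparison isometries governed by the transverse measure, reduction to the dense set of weighted simple closed curves, equicontinuity and Arzel\`a--Ascoli for the limit). One caveat worth fixing if you write this out: prescribing the comparison isometry between \emph{every} pair of strata to be a translation of length exactly $t$ times the intervening transverse measure is inconsistent as stated, since translation lengths are not additive under composition of translations along distinct axes; the correct construction imposes this only for adjacent strata (atoms of the measure) and defines the general comparison maps as limits of finite compositions over fine subdivisions, with the weakly-separating-axis and ``leftness'' properties then verified for the limit, which is precisely how \cite{Ke} proceeds.
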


The metric $\mathcal E^t_\alpha X$ defined in Proposition \ref{prop:earthquake} is said to be obtained from $X$ by a \textit{(left) earthquake of amplitude t along the measured geodesic lamination $\alpha$}, and the orbits $\{\mathcal E^t_\alpha X\}_{t=-\infty}^\infty$, $\{\mathcal E^t_\alpha X\}_{t=-\infty}^0$ and $\{\mathcal E^t_\alpha X\}_{t=0}^\infty$ are called the \textit{earthquake line directed by $\alpha$ and starting at $X$}, the \textit{anti-earthquake ray directed by $\alpha$ and starting at $X$}, and  the \textit{earthquake ray directed by $\alpha$ and starting at $X$}, respectively.

 Recall that for a metric space $(X,d)$, an unbounded path $\gamma:[0,\infty)\to X$ is called an \textit{almost-geodesic} if for any $\epsilon>0$,  there exists $T>0$, such that
 $$ |d(\gamma(0),\gamma(s))+d(\gamma(s),\gamma(t))-t)|< \epsilon$$
 for any $t\geq s\geq T.$

 Now we  state our main result.
  \begin{mtheorem}\label{thm:main-theorem}
  After reparametrization, every (anti-)earthquake ray is an almost-geodesic in $(T_{g,n},d^{\Gamma}_h)$.
\end{mtheorem}
 In fact, the image of an earthquake ray under Hamenst\"adt's parametrization eventually looks like a projective line (see \S~\ref{sec:earthquake}).

 \vskip 10pt
 \subsection*{Outline}
 This paper is organized as the following. In Section 2, we recall some basic properties of  the Hilbert metric  and express the Hilbert  metric $d^\Gamma_h$  on the Teichm\"uller space based on Hamenst\"adt's parametrization.
 In Section 3, we prove our main theorem.
 In Section 4, we study the  dependence of the Hilbert metric $d^{\Gamma}_h$ on the choice of the preferred triangulation $\Gamma$. We will show that a sphere $B(X_0,R)$   centered at $X_0\in T_{g,n}$ of radius $R$ with respect to  $d^\Gamma_h$ for a preferred triangulation $\Gamma$ is again a sphere up to an additive constant with respect to $d^{\Gamma'}_h$ for another preferred triangulation $\Gamma'$, provided that $\Gamma'$ can be obtained from $\Gamma$ by a \textit{diagonal-flip} (to be defined in \S~\ref{sec:triangulation}). But the additive constant depends on the center point $X_0$.
 In Section 5, we study the actions of  the mapping class group on $(T_{g,n},d^{\Gamma}_h)$. It is well known that when the Teichm\"uller space is endowed with the Teichm\"uller metric, the Thurston metric or the Weil-Petersson metric, the mapping class group acts by isometries. But here, we will show that  the action of a positive Dehn twist is not isometric (see Corollary \ref{cor:nonisometry}). Instead, it is an almost isometry on an unbounded subset of $(T_{g,n},d^{\Gamma}_h)$ (see Corollary \ref{cor:Dehn-twist}).

\vskip 20pt
\textbf{Acknowledgements.}  I would like to thank my advisor Lixin Liu for his careful reading of this manuscript and many useful suggestions. I would like to thank Athanas Papadopoulos for useful conversations during his visit at Sun Yat-Sen University (Zhongshan University). I also thank the referee for numerous comments and suggestions.

\vskip 40pt
\section{the Hilbert metric on the Teichm\"uller space}\label{sec:Hilbert}

 \subsection{The Hilbert metric}\label{subsec:Hilbert}
 There are two versions of the Hilbert's metric. The first version is the original one due to Hilbert which is  defined  on a bounded convex domain $\Omega$ (see Fig.\ref{fig:hilbert}) in $\mathbb R^m$. Let $x, y $ be two points in the interior of $\Omega$, the line passing through $x, y$ intersects the boundary  $\partial{\Omega}$ at two points $a, b$, where $x$ lies between $a$ and $y$. Then the Hilbert metric is defined as:
\begin{equation}\label{eq:Hilbert}
 d_{H}(x,y)=\frac{1}{2}\log{[a,b,y,x]}=\frac{1}{2}\log{\frac{|a-y||b-x|}{|a-x||b-y|}},
\end{equation}
where $[a,b,y,x]$ represents the cross-ratio of $a,\ x,\ y,\ b$.
\par
The second version is due to Birkhoff which is defined  on the cone $\C$ over a bounded convex domain  $\Omega$ (see \cite{KN}, \cite{KP} and \cite{LN} for more details about this version). Recall that a cone $\C$  is called \textit{pointed} if $\C \cap -\C={0}$. Let $\C$ be a closed, pointed (convex) cone over a convex bounded domain $\Omega$ in $\mathbb R^m$.  Given two nonzero vectors $x$ and $y$ in $\C$ (see Fig.\ref{fig:birk-hilbert}), the \textit{Birkhoff's version of the Hilbert metric}, denoted as $d_h$ is defined as:
\begin{equation}\label{eq:Birkhoff}
  d_{h}(x, y)=\frac{1}{2}\log{M(x,y)/m(x,y)},
\end{equation}
 where
 \[M(x,y)=\inf\{\lambda \geq0:\lambda y-x\in \C\},\]
 \[m(x,y)=\sup\{\lambda \geq 0: x-\lambda y\in \C\}.\]
 Denote by $o$ the cone point of $\mathcal C$, and suppose that the line $\overline{xy}$ passing through $x,y$ intersects the boundary $\partial \mathcal C$ at $a,b$.
 To calculate $M(x,y)$ explicitly, we distinguish two cases. The first case is that the points $o,y,x$ are collinear. In this case $M(x,y)=m(x,y)=|x|/|y|$, hence $d_h(x,y)=0$. The second case is that the points $o,x,y$ are not collinear. We draw an auxiliary line $\overline{xp}$ from $x$ which is parallel to the line $\overline{ob}$ and intersects the line $\overline{oy}$ at $p$. Then
  \[M(x,y)=\frac{| p-o |}{| y-o |}=\frac{| x-b | }{| y-b |}.\]
  Similarly we get
   \[m(x,y)=\frac{|x-a|}{| y-a |}.\]
  Hence
  \[\frac{M(x,y)}{m(x,y)}=\frac{| a-y| | b-x |}{| a-x||b-y |}=[a,b,y,x].\]
  By the property of cross-ratio, we have $d_{h}(\lambda x, \mu y)=d_{h}(x,y)$  for any $\lambda>0, \mu>0$.
  It is clear that $d_h$ is not a metric on $\mathcal C$ since it does not separate $x$ and $\lambda x$ for any $\lambda>0$. In fact, $d_h$ is a metric on the projective space $\mathcal C/ \mathbb R^+$.

 \begin{figure}
   \subfigure[]
   {
   \begin{minipage}[tbp]{50mm}
    \includegraphics[width=50mm]{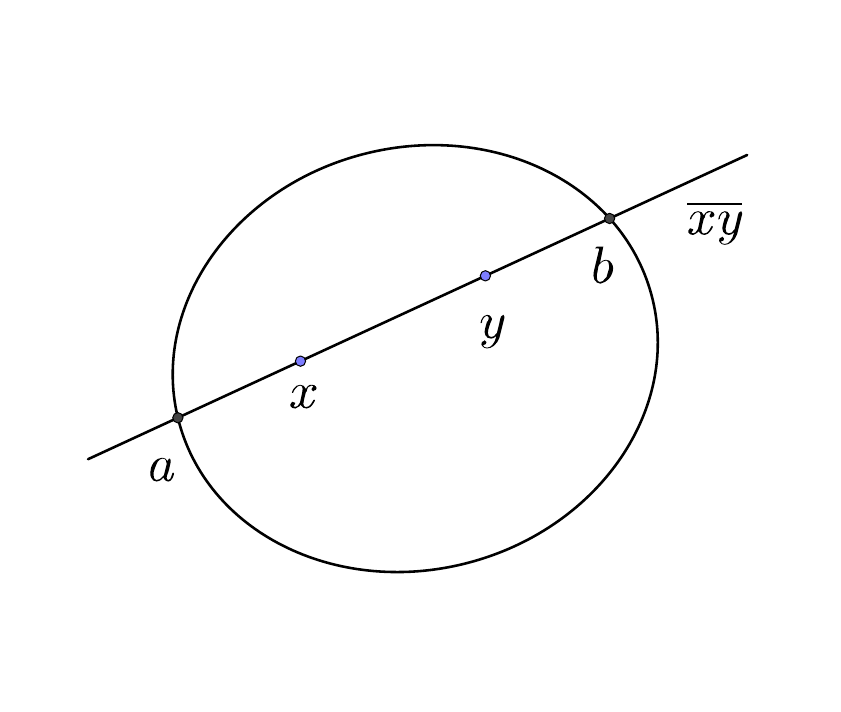}
   \end{minipage}
   \label{fig:hilbert}
   }
      \subfigure[]
   {
   \begin{minipage}[tbp]{50mm}
    \includegraphics[width=50mm]{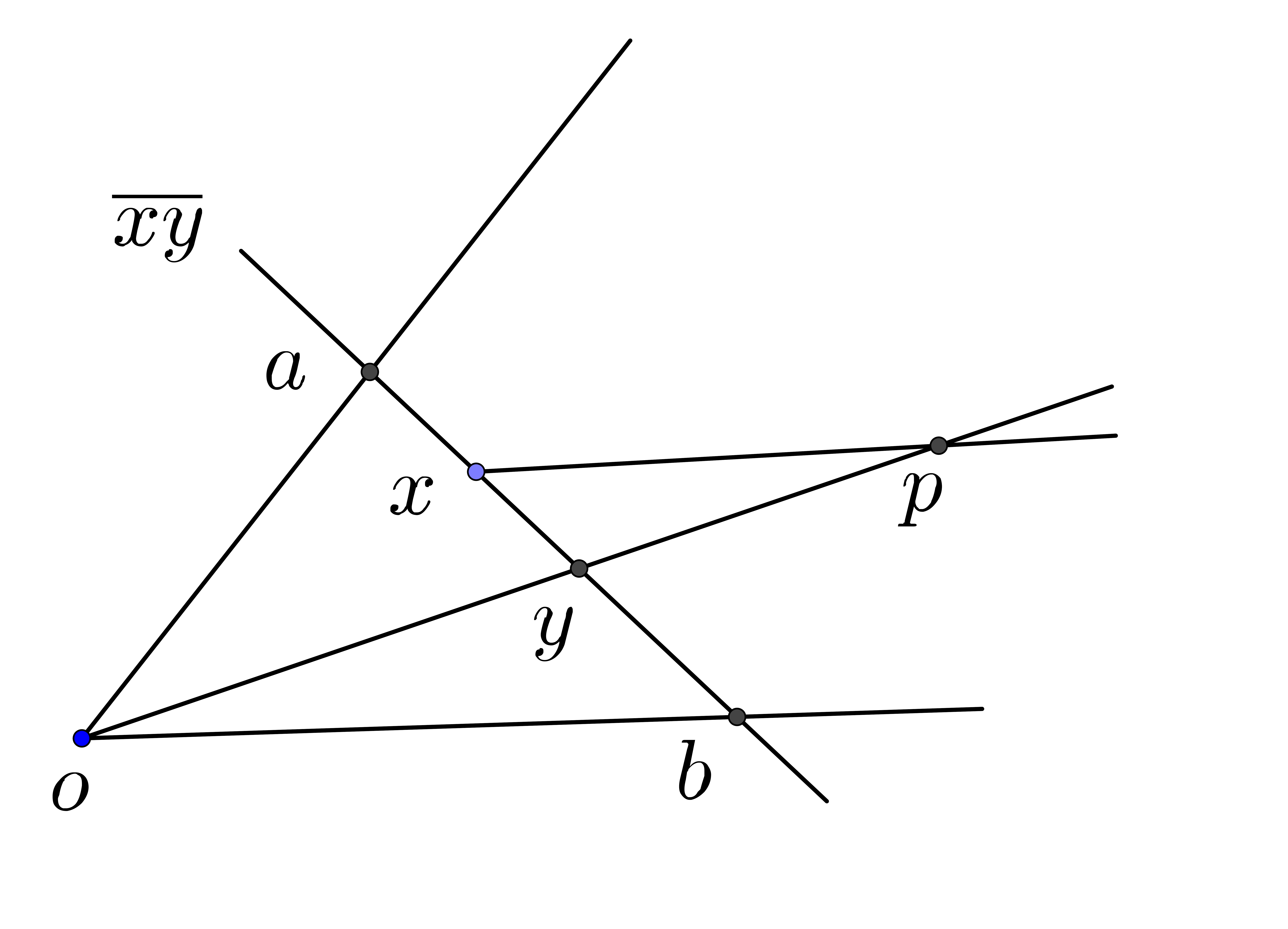}
   \end{minipage}
   \label{fig:birk-hilbert}
   }

      \subfigure[]
   {
   \begin{minipage}[tbp]{50mm}
    \includegraphics[width=50mm]{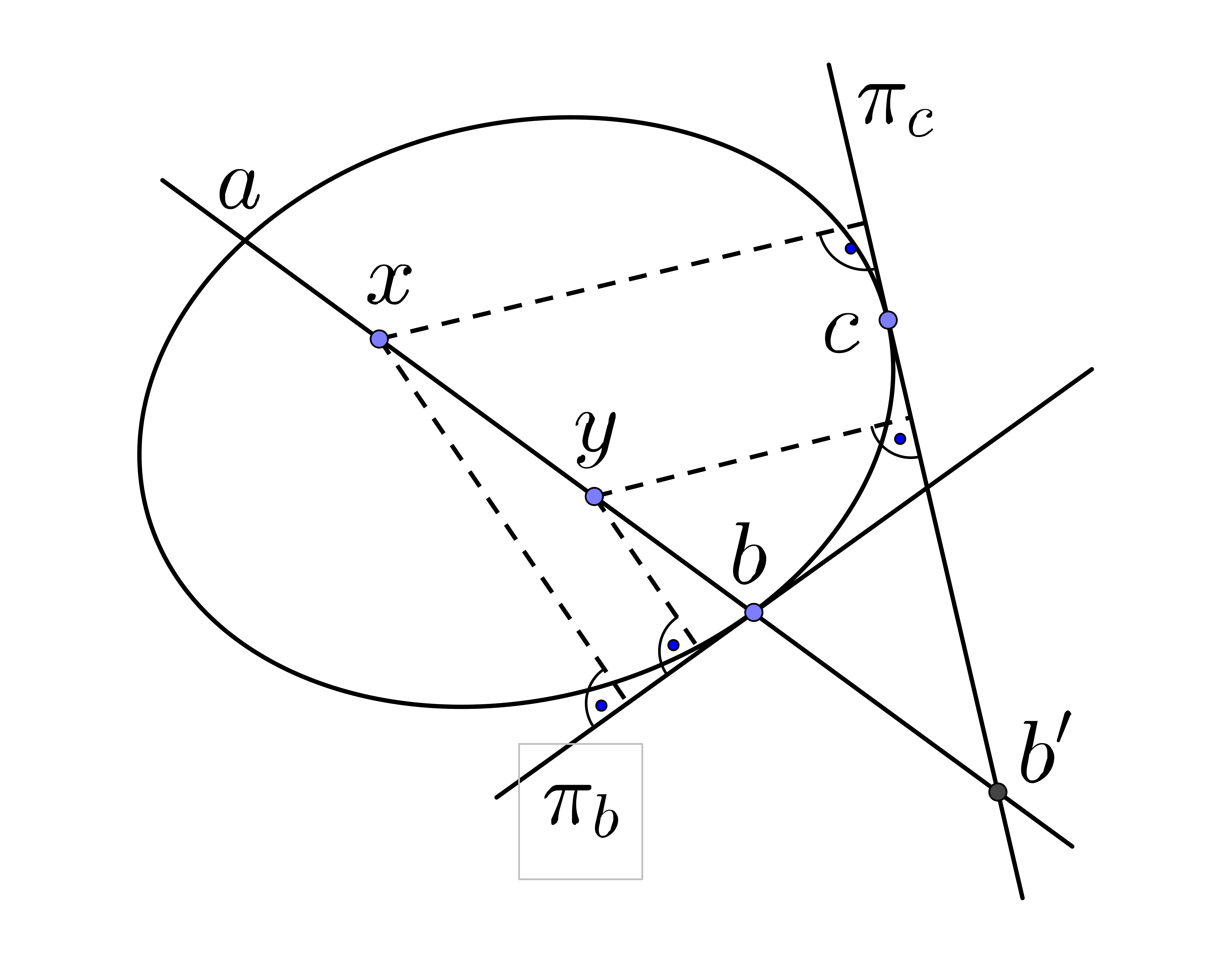}
   \end{minipage}
   \label{fig:yamada-hilbert}
   }
  \caption{the Hilbert metric}
 \end{figure}

\vskip 10pt
  Yamada \cite{Yamada2} gave an alternate definition of the Hilbert metric by supporting hyperplanes. Recall that a convex set $\Omega$ can be represented as $\cap_{\pi(b)\in \mathcal{P}}H_{\pi(b)}$ where $H_{\pi(b)}$ is the half space bounded by a supporting hyperplane $\pi(b)$ of $\Omega$ at the boundary point b, containing the convex set $\Omega$. Let $\mathcal{P}$ be the set of all the supporting hyperplanes of $\Omega$ (see Fig.\ref{fig:yamada-hilbert}). Yamada showed  that the Hilbert metric can be represented as:
 \begin{equation}
   d_{H}(x,y)=\frac{1}{2}(\sup_{\pi \in \mathcal{P}}\log \frac{d(x,\pi)}{d(y,\pi)}+\sup_{\pi \in \mathcal{P}}\log \frac{d(y,\pi)}{d(x,\pi)}).
 \end{equation}

We briefly explain Yamada's idea here. Let the line $\overline{xy}$ intersects $\Omega$ at $a,b$ in the order $a,x,y,b$. Let $\pi(b)$ be a supporting hyperplane of $\Omega$ at $b$, and let $\pi(c)$ be a hyperplane of $\Omega$ at an arbitrary point $c\in\Omega$. Denote by $b'$ the intersection point between $\overline{xy}$ and $\pi(c)$. It is clear that
$$\frac{d(x,\pi(c))}{d(y,\pi(c))}=
\frac{|x-b'|}{|y-b'|}\leq \frac{|x-b|}{|y-b|}
=\frac{d(x,\pi(b))}{d(y,\pi(b))},
\text{ for any }c\in \Omega. $$
Similarly, we have
$$\frac{d(y,\pi(c))}{d(x,\pi(c))}
\leq\frac{d(y,\pi(a))}{d(x,\pi(a))}
\text{, for any }c\in \Omega. $$
Since a cone over a bounded convex set is again a convex set, Yamada's idea also applies to the Birkhoff's version of the Hilbert metric. Therefore
\begin{equation}\label{eq:Yamada}
  d_{h}(x,y)=\frac{1}{2}(\sup_{\pi \in \mathcal{P}}\log \frac{d(x,\pi)}{d(y,\pi)}+\sup_{\pi \in \mathcal{P}}\log \frac{d(y,\pi)}{d(x,\pi)}),\text{ for any }x,y\in\mathcal C\ \text{ with } [x]\neq[y],
 \end{equation}
where $[x]=\{\lambda x\in\mathcal C: \lambda>0\},$ and $\mathcal P$ is the set of supporting hyperplanes of the cone $\mathcal C$.

 \vskip 10pt
  \remark It is easy to see that these two versions of Hilbert metric coincide on the convex bounded domain $\Omega$, i.e.  $d_H(x,y)=d_h(x,y)$  for any $ x,y \in \Omega$. 
  In this paper, we adopt the Birkhoff's version of the Hilbert metric.

\vskip 10pt

 \subsection{Measured geodesic lamination}\label{sec:lamination}

 Given a reference metric $X_0$, a \textit{geodesic lamination} $L$ is a closed subset of $S_{g,n}$ consisting of mutually disjoint simple geodesics which are called \textit{leaves} of this geodesic lamination. A \textit{transverse invariant measure} $\mu$ of a geodesic lamination  $L$ is a Radon measure defined on  every arc $k$ transverse to the support of $L$ such that $\mu$ is invariant with respect to any homotopy of $k$ relative to the leaves of $L$.
 A \textit{measured geodesic lamination} is a lamination $L$ endowed with a transverse invariant measure $\mu$. The simplest example of a measured geodesic lamination is a simple closed geodesic, where the transverse invariant measure is the Dirac measure.   Each measured geodesic lamination $\mu$ induces a functional on the space $\mathcal{S}$ of isotopy classes of nontrival simple closed curves on $S_{g,n}$, which assigns $\inf_{\tilde\gamma\in[\gamma]}\int _{\tilde \gamma}d\mu$ to any $[\gamma]\in \mathcal{S}$. The amount $\inf_{\tilde\gamma\in[\gamma]}\int _{\tilde \gamma}d\mu$ is called the \textit{intersection number} of $\mu$ with $[\gamma]$ and is denoted by $i(\mu,[\gamma])$. Two measured geodesic laminations $\mu,\mu'$ are called \textit{equivalent} if $i(\mu,[\gamma])=i(\mu',[\gamma])$ for any $[\gamma]\in \mathcal S$. Denote by $\mathcal {ML}$ the space of equivalent classes of measured geodesic laminations on $S_{g,n}$, and equip $\mathcal{ML}$  with the weak topology of the functional space over $\mathcal S$. With this topology, the set of weighted simple closed curves , $\mathbb R^+\times \mathcal S$,  is dense in $\mathcal {ML}$.
 The Thurston boundary $\partial T_{g,n}$ of the Teichm\"uller space $T_{g,n}$ consists of the projective classes of measured geodesic laminations and is homeomorphic to the unit sphere $\mathbb S^{6g-6+2n}$ (for more details about measured geodesic laminations we refer to \cite{FLP} and \cite{Penner}).

 \vskip 10pt
 \subsection{Geometric parametrization of the Teichm\"uller space}\label{sec:geometri para}
 Hamenst\"adt  \cite{Ham1}  gave a geometric parametrization of the \Teich  $T_{g,m}$. We briefly recall this parametrization. Let $n\geq1$ and let $X\in T_{g,n}$.
 Fix one of the punctures of $X$ and denote it by $O$. As we explained in the introduction, a preferred triangulation $\Gamma$ is a set of  $6g-5+2n$  mutually disjoint simple geodesics ${\eta}_1, ...,{\eta}_{6g-5+2n}$ on $S$ with the  two ends of each simple geodesic going into the puncture $O$ and which decompose $X$ into $4g-3+n$ ideal triangles and $n-1$ once-punctured discs.   The space of measured lamination $\ML$ on $X$ can be parameterized by the $6g-5+2n$-tuple $(i({\eta}_1,\mu), ...,i({\eta}_{6g-5+2n},\mu))\in \R^{6g-5+2n}$, where $\mu$ is a measured geodesic lamination with compact support and $i(\eta_i,\mu)$ represents the intersection number of $\mu$ with $\eta_i$. Let $\mathcal{A}$ be the set of all $6g-5+2n$-tuples$ (a_1,...,a_{6g-5+2n})$ of nonnegative real numbers with the following properties:
 \par(1) $a_i\leq a_j + a_k$ if the geodesics $\eta_i, \eta_j,\eta_k$ are the sides of an ideal triangle on $S$.
 \par(2) There is at least one ideal triangle on $S$ with sides $\eta_i, \eta_j,\eta_k$  such that $a_i= a_j + a_k$.

 In particular, $\mathcal{A}$ is a cone with vertex at the origin over the boundary of a convex finite-sided polyhedron $P$ in the sphere $\mathbb S^{6g-6+2n}$. And $\mathcal{A}$ is homeomorphic to $\R^{6g-6+2n}$.

 \begin{theorem}[\cite{Ham1}]\label{MF}
 The map
 \par $\mu \in \ML \to (i({\eta}_1,\mu), ...,i({\eta}_{6g-5+2n},\mu))\in \R^{6g-5+2n}$ is a homeomorphism of $\ML$ onto $\mathcal{A}$.
 \end{theorem}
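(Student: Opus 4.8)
The plan is to recover the coordinates from the way a lamination sits inside the triangles of $\Gamma$ and to recognise $\mathcal{A}$ as the cone of weight systems such traces can realise. Since a compactly supported $\mu$ carries no essential arc into a once-punctured disc, those $n-1$ regions stay empty and all of the data is concentrated in the $4g-3+n$ ideal triangles. Inside a triangle $T$ with sides $\eta_i,\eta_j,\eta_k$ the trace of $\mu$ is a disjoint union of three families of geodesic arcs, one for each pair of sides, with transverse weights $x_{ij},x_{jk},x_{ik}\ge 0$; since every arc meeting $\eta_i$ runs to $\eta_j$ or to $\eta_k$, one has $a_i=x_{ij}+x_{ik}$ together with the two cyclic relations, whence $x_{jk}=\tfrac12(a_j+a_k-a_i)$ and so on. Non-negativity of the three weights is exactly the three triangle inequalities, giving property (1); and the two values that a shared edge $\eta_i$ receives from its two adjacent triangles agree automatically, both being the single number $a_i=i(\eta_i,\mu)$, so no further gluing constraint is needed.

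The crux --- and the step I expect to be the main obstacle --- is to show that property (2) records exactly the compactness of the support. The set cut out by (1) alone is a full-dimensional cone in $\mathbb{R}^{6g-5+2n}$, one dimension too large, so compactness of the support must account for precisely one degeneration, placing the image on the topological boundary of that cone, which is where some triangle inequality becomes an equality. Geometrically, all ends of the $\eta_i$ enter the single puncture $O$, so every corner of every triangle sits at $O$; an equality $a_i=a_j+a_k$ in $T$ is the vanishing of the weight of the corner of $T$ at which $\eta_j$ and $\eta_k$ meet. If no such equality held, every corner at $O$ would carry positive weight, and I would argue that the corner arcs could then be chained around the cusp so as to produce a leaf running off into $O$, contradicting compact support; conversely a vanishing corner weight opens a cusp neighbourhood of $O$ that the straightened lamination avoids. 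Turning this heuristic into a proof --- controlling how the corner families match along the edges around $O$ and ruling out a spike into the cusp --- is the real work.

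It remains to upgrade the bijection into a homeomorphism. Continuity of $\mu\mapsto(i(\eta_1,\mu),\dots,i(\eta_{6g-5+2n},\mu))$ is immediate from the weak topology on $\mathcal{ML}$. Injectivity follows from the first step: the arc weights in every triangle, and hence the isotopy pattern and weights of $\mu$, are recovered linearly from the tuple $(a_i)$, so $i(\gamma,\mu)$ is determined for every simple closed curve $\gamma$, forcing equal tuples to come from equal laminations. For surjectivity onto $\mathcal{A}$ I would run the construction backwards: given $(a_i)\in\mathcal{A}$, property (1) produces non-negative arc weights in each triangle; the matching edge values glue these arc families across the $\eta_i$ into a single system of disjoint weighted arcs; property (2) provides a cusp neighbourhood of $O$ free of arcs, so the resulting object straightens to a compactly supported measured geodesic lamination realising the prescribed intersection numbers. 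Finally, as both $\mathcal{ML}$ and $\mathcal{A}$ are homeomorphic to $\mathbb{R}^{6g-6+2n}$, a continuous bijection between them is automatically a homeomorphism by invariance of domain; alternatively the linear recovery of the arc weights exhibits a continuous inverse directly.
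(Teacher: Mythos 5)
This statement is imported from Hamenst\"adt \cite{Ham1} and used as a black box; the paper contains no proof of it, so there is nothing internal to compare your argument against. On its own terms, your normal-coordinates computation in the triangles (corner weights $x_{jk}=\tfrac12(a_j+a_k-a_i)$, non-negativity giving condition (1), automatic matching along edges) is the right start, but one of your supporting claims is false: the $n-1$ once-punctured discs do \emph{not} stay empty. Each such region is bounded by a single $\eta_i$ looping around a puncture $p\neq O$, and a compactly supported lamination typically enters it: on the four-punctured sphere the curve separating $\{O,p_1\}$ from $\{p_2,p_3\}$ meets the loop $\eta_2$ around $p_2$ twice and leaves an essential geodesic arc inside that disc (a ``bigon with one ideal vertex'' is not excluded by Gauss--Bonnet when the two finite angles sum to less than $\pi$). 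The arcs in such a disc are all parallel with total weight $a_i/2$, so they contribute no new coordinates, but your reconstruction of $\mu$ from $(a_i)$ and your injectivity argument must build and account for them; as written they ignore a nonempty part of the lamination.

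The more serious problem is the one you flag yourself: the equivalence of condition (2) with compact support is the entire content of the theorem, and the dimension count plus the ``chain the corner arcs around the cusp'' heuristic does not establish it. The dimension count only shows the image cannot be the full cone cut out by (1); it does not show the image is exactly the boundary locus where some triangle inequality degenerates. To close this you need an actual argument in both directions: for instance, working in the universal cover at a lift $\tilde O$ of the cusp, if every corner at $O$ carried positive weight, the unlinking property of endpoints of leaves forces a nested sequence of leaves whose endpoints converge to $\tilde O$, yielding in the limit either a leaf asymptotic to the cusp or leaves escaping every compact set, contradicting compact support; and conversely, given $(a_i)$ satisfying (1) and (2), one must check that the glued weighted arc system contains no puncture-parallel annulus (a single vanishing corner weight kills the annulus around $O$, and the construction creates none around the other punctures) and straightens to a measured geodesic lamination realizing the prescribed numbers. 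Since this step is asserted rather than proved, the proposal as it stands does not constitute a proof of the theorem.
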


 Since for any marked hyperbolic metric $X\in T_{g,m}$, the length of $\eta_i$ is infinite for any $i=1,2,...,6g-5+2n$,  we need a little modification to parameterize the \Teich   $T_{g,m}$. Recall that every puncture of $S$ admits a standard cusp neighbourhood which is isometric to a cylinder $[-\log2,\infty)\times S^1$ equipped with the metric $d\rho^2+e^{-2\rho}dt^2$ (\cite{Bu}). The $\rho$-coordinate in this representation is called the height. Let $\Delta_{\infty}$ be an ideal triangle on the upper half plane (see Fig.~\ref{fig:triangle}) with sides $\eta_1,\eta_2,\eta_3$. Each corner of $\Delta_{\infty}$ can be foliated by horocycles. Extend these foliations until they fill all but in the center bounded by three horocycles $M_{1}M_{2},M_{2}M_{3},M_{3}M_{2}$. We call $M_i$ the \textit{midpoint} of $\eta_i$ with respect to the ideal triangle $\Delta_\infty$ for $i=1,2,3$. Choose the height $\rho_0$ small enough such  that $e^{-\rho_0}$ is much smaller than  the length of the horocycle $M_{1}M_{2}$ (note that the three horocycles $M_{1}M_{2},M_{2}M_{3},M_{3}M_{2} $ have the same length).
Every geodesic going into the cusp meets the horocycles $\rho$=constant orthogonally. Hence each choice of a height $\rho_0$ cuts from $\eta_i$ a unique compact arc of finite length since both ends of the geodesic $\eta_i$ go into the cusp for any $i=1,2,...,6g-5+2n$. Denote by $l_{\eta_i}(X)$ the length of this subarc of $\eta_i$ for a marked hyperbolic metric $X$ for any $i=1,2,...,6g-5+2n$.
\begin{theorem}[\cite{Ham1}]\label{thm:Teich}
Let $\Pi:\mathbb R^{6g-5+2n}\backslash \{0\}\to \mathbb R P^{6g-6+2n}$ be the canonical projection.
The map
$$\Lambda: S\in T_{g,n} \to (l_{\eta_1}(S),...,l_{\eta_{6g-5+2n}}(S))\in \R^{6g-5+2n}$$
is a diffeomorphism of $T_{g,n}$ onto a hypersurface in $\R^{6g-5+2n}$. And the map $\Pi\circ \Lambda$ is a diffeomorphism of $T_{g,n}$ onto the interior of a finite-sided closed convex polyhedron $P$ in $\mathbb{R}{P}^{6g-6+2n}$ which extends to a homeomorphism of $T_{g,n}\cup \partial{T_{g,n}}$ onto $P$.
\end{theorem}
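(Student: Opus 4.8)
The plan is to establish that $\Lambda$ is a real-analytic embedding whose image is a radial graph (a hypersurface meeting each ray from the origin transversally and exactly once), so that the radial projection $\Pi$ converts it into a diffeomorphism onto an open subset of $\mathbb{R}P^{6g-6+2n}$, and then to pin down that open subset as $\mathrm{int}(P)$ by analyzing the behaviour of $\Pi\circ\Lambda$ near the Thurston boundary and invoking Theorem \ref{MF}. I would organize the argument in three blocks: a local/infinitesimal block (smoothness, immersion, injectivity), a boundary-asymptotics block (the map extends continuously to $\partial T_{g,n}$ and there agrees with projectivized intersection numbers), and a topological block (upgrading a continuous injection of the closed ball to a homeomorphism onto $P$).

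For the local block, first I would note that each $l_{\eta_i}$ is a real-analytic function on $T_{g,n}$, since the geodesic representative of $\eta_i$ and the fixed height-$\rho_0$ horocycle about $O$ vary real-analytically with $X$, and hence so does the length of the compact arc they cut out. Injectivity I would obtain by a reconstruction argument in the spirit of decorated Teichm\"uller theory: the triangulation $\Gamma$ cuts $X$ into finitely many ideal triangles and once-punctured discs, all of whose intrinsic shapes are rigid, so $X$ is determined by the shear (gluing) parameters along the edges $\eta_i$; the midpoints $M_i$ provide canonical basepoints on each edge, and the truncated lengths $l_{\eta_i}(X)$ record precisely how the two adjacent triangles are offset, allowing the shears to be recovered from the length vector. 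A dimension count then shows $d\Lambda$ has full rank $6g-6+2n$, so $\Lambda$ is an immersion and its image is a smooth hypersurface in $\mathbb{R}^{6g-5+2n}$. Because a non-degenerate hyperbolic ideal triangle forces the strict inequalities $l_{\eta_i}<l_{\eta_j}+l_{\eta_k}$ on each triangle, the projectivized image lands inside, never on, the walls $a_i=a_j+a_k$ of $P$, i.e. in $\mathrm{int}(P)$; transversality of the hypersurface to the radial rays makes $\Pi\circ\Lambda$ a local diffeomorphism.

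The decisive and most delicate block is the boundary asymptotics. Here I would show that if $X_k\to[\mu]\in\mathcal{PML}=\partial T_{g,n}$ in Thurston's compactification, then
$$\frac{l_{\eta_i}(X_k)}{\sum_{j}l_{\eta_j}(X_k)}\longrightarrow \frac{i(\eta_i,\mu)}{\sum_{j}i(\eta_j,\mu)}.$$
The mechanism is that as $X_k$ degenerates by stretching along $\mu$, the compact part of the transverse geodesic $\eta_i$ must cross the concentrating lamination, and its length grows like a common scale $c_k\to\infty$ times the transverse measure it accumulates, which is $i(\eta_i,\mu)$, with lower-order corrections coming from the bounded excursions into the cusp. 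Making this precise---controlling the error terms uniformly in $i$ and in the direction of degeneration, and for arcs rather than closed curves---is where I expect the real work to lie; the cleanest route is probably to pass to the dual $\mathbb{R}$-tree of $\mu$ and estimate arc lengths there, or to use Hamenst\"adt's explicit horocyclic-foliation geometry of the ideal triangles directly. Granting this, $\Pi\circ\Lambda$ extends continuously to the closed ball $T_{g,n}\cup\partial T_{g,n}$, and on the boundary it becomes the projectivization of $\mu\mapsto(i(\eta_1,\mu),\dots,i(\eta_{6g-5+2n},\mu))$, which by Theorem \ref{MF} is a homeomorphism of $\mathcal{PML}$ onto $\partial P$.

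Finally I would assemble the pieces topologically. The map $\Pi\circ\Lambda\colon T_{g,n}\cup\partial T_{g,n}\to P$ is a continuous map of a closed $(6g-6+2n)$-ball into $\mathbb{R}P^{6g-6+2n}$ that is a local diffeomorphism on the open ball, sends the boundary sphere homeomorphically onto $\partial P$, and sends the interior into $\mathrm{int}(P)$. By invariance of domain the image of the interior is open, and a degree argument (the boundary map has degree one onto $\partial P$, which bounds $P$) forces $\Pi\circ\Lambda$ to be a bijection of the open ball onto $\mathrm{int}(P)$, hence, being a local diffeomorphism, a diffeomorphism; together with the boundary homeomorphism this yields the claimed homeomorphism of $T_{g,n}\cup\partial T_{g,n}$ onto $P$. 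The statement for $\Lambda$ itself then follows, since $\Lambda$ is $\Pi\circ\Lambda$ composed with the radial normalization onto its hypersurface image. The principal obstacle throughout is the uniform boundary asymptotic displayed above; everything else is either standard real-analytic geometry of length functions or a soft topological wrap-up.
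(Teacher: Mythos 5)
The paper does not prove this statement at all: it is quoted verbatim from Hamenst\"adt \cite{Ham1} and used as a black box, so there is no internal proof to measure your proposal against. Judged on its own terms, your outline identifies the right architecture (local injectivity and immersion, boundary asymptotics, degree-theoretic wrap-up), and several ingredients are consistent with facts the paper does establish elsewhere, e.g.\ the strict inequalities $l_{\eta_j}(X)-l_{\eta_i}(X)+l_{\eta_k}(X)\geq 2\rho_0$ of (\ref{eq:rho}), which indeed place $\Lambda(T_{g,n})$ in the open cone over $P$.

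However, there is a genuine gap: the block you yourself flag as ``where the real work lies'' --- the uniform asymptotic $l_{\eta_i}(X_k)/\sum_j l_{\eta_j}(X_k)\to i(\eta_i,\mu)/\sum_j i(\eta_j,\mu)$ as $X_k\to[\mu]$ --- is not an implementation detail but is essentially the entire content of Hamenst\"adt's theorem beyond routine facts; naming two possible ``routes'' ($\mathbb{R}$-trees, horocyclic foliations) without carrying either out, and without addressing uniformity over directions of degeneration for \emph{arcs} truncated at a metric-dependent horocycle, leaves the theorem unproved. Two smaller but real defects: a dimension count cannot ``show $d\Lambda$ has full rank'' --- you must actually invert the length-to-shear relation (including the completeness/cusp conditions and the once-punctured-disc pieces) to get injectivity of the differential; and containment of the image in $\mathrm{int}(P)$ requires knowing that $P$ is cut out exactly by the triangle inequalities $a_i\leq a_j+a_k$ together with the normalization, which you use implicitly but should extract from Theorem \ref{MF}. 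The concluding degree argument is sound once the boundary extension is in hand.
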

\begin{figure}
  \includegraphics[width=70mm]{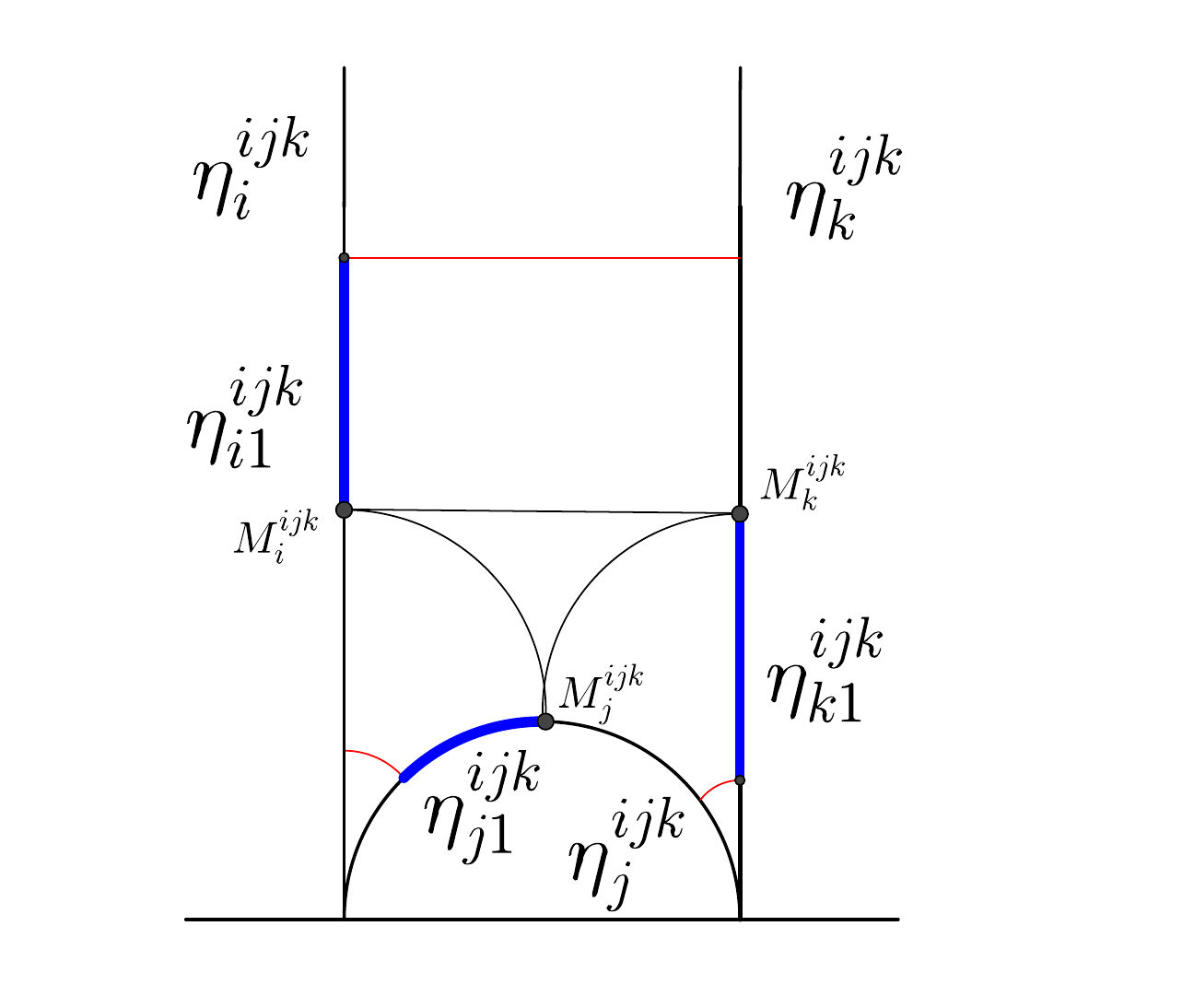}

  \caption{The ideal triangle $\Delta_\infty$}
  \label{fig:triangle}
\end{figure}

\subsection{The Hilbert  metric on the Teichm\"uller space}
Now we give the Hilbert  metric on the Teichm\"uller space. Let $\pi_{ijk}$ be the hyperplane in $\R^{6g-5+2n}=\{(a_1,a_2,...,a_{6g-5+2n}): a_i \in \R\}$ defined by
\begin{equation}\label{pi}
 \pi_{ijk}:a_i-a_j+a_k=0,
\end{equation}
where the indices  $i,\ j,\ k$ satisfy the condition that the corresponding simple geodesics $\eta_i,\ \eta_j,\ \eta_k$ bound an ideal triangle on the surface. Let $\mathcal{P}$ be the set of all such hyperplanes.
By Theorem \ref{MF} and Theorem \ref{thm:Teich}, we know that the image of $T_{g,n} \cup \partial{T_{g,n}}$ under the map $\Pi\circ\Lambda$ is  a polyhedron $P$. 
Following Yamada's idea, we define the Hilbert metric on $T_{g,n}$ by ~(\ref{eq:Yamada}):
\begin{eqnarray}
   {d}^{\Gamma}_h(X_1,X_2)&:=&d_h(\Lambda(X_1),\Lambda(X_2))\nonumber\\
   &=&\frac{1}{2}(\sup_{\pi\in \mathcal{P}}\frac{d(\Lambda(X_1),\pi_{ijk})}{d(\Lambda(X_2),\pi_{ijk})}+\sup_{\pi\in \mathcal{P}}\frac{d(\Lambda(X_2),\pi_{ijk})}{d(\Lambda(X_1),\pi_{ijk})}),
 \end{eqnarray}
for all $X_1,X_2\in T_{g,n}$, where $d(\ ,\ )$ represents the Euclidean distance.
From  (\ref{pi}), we have
\[d(\Lambda(X),\pi_{ijk})=\frac{l_{\eta_i}(X)-l_{\eta_j}(X)+l_{\eta_i}(X)}{3^{1/2}}.\]
Hence
 \begin{eqnarray}\label{eq:def}
  {d}^{\Gamma}_h(X_1,X_2)&=&\frac{1}{2}(\sup_{\pi_{ijk}\in \mathcal{P}}\log\frac{{l_{\eta_i}(X_1)-l_{\eta_j}(X_1)+l_{\eta_k}(X_1)}}
{{l_{\eta_i}(X_2)-l_{\eta_j}(X_2)+l_{\eta_k}(X_2)}}\\
&&
+\sup_{\pi_{ijk}\in \mathcal{P}}\log\frac{{l_{\eta_i}(X_2)
-l_{\eta_j}(X_2)+l_{\eta_k}(X_2)}}{{l_{\eta_i}
(X_1)-l_{\eta_j}(X_1)+l_{\eta_k}(X_1)}})\nonumber.
 \end{eqnarray}

 Now we give some explanations for ~(\ref{eq:def}). Since the surface $S_{g,n}$ is orientable, we fix an orientation. Denote by $\Delta_{ijk}$ the ideal triangle with three sides $\{\eta_i,\eta_j,\eta_k\}$ such that $\eta_i,\eta_j,\eta_k$ appears in the counterclockwise order. Obviously $\Delta_{ijk}=\Delta_{jki}=\Delta_{kij}$.  Let $\mathbb T_\Gamma$ be  the collection of all such triangles corresponding to $\Gamma$.
 The two ends of each of $\eta_i,\eta_j,\eta_k$ go into the cusp $O$ and the intersection of the  horocycle $\rho=\rho_0$ with the ideal triangle $\Delta_{ijk}$ consists of three components, denoted as $h^{ijk}_i, h^{ijk}_j,h^{ijk}_l$ (the red lines in Fig.~\ref{fig:triangle}). It is clear that the sum of the lengths $l_{h^{ijk}_i}(X)+l_{h^{ijk}_j}(X)+l_{h^{ijk}_k}(X)$ over all these $4g-3+n$ ideal triangles is less than the length of the horocycle $\rho=\rho_0$.
 Denote by $M^{ijk}_i$ the midpoint of $\eta_i$ with respect to the ideal triangle $\Delta_{ijk}$. $M_i$ together with $h^{ijk}_j,h^{ijk}_k$ cuts $\eta_i$ into four connected components, two of which are compact, denoted by $\eta^{ijk}_{i1}$ and $\eta^{ijk}_{i2}$ with respect to the counterclockwise order (see Fig.~\ref{fig:triangle}). It follows that $$l_{\eta^{ijk}_{i2}}(X)=l_{\eta^{ijk}_{j1}}(X);\ l_{\eta^{ijk}_{j2}}(X)=l_{\eta^{ijk}_{k1}}(X);\
 l_{\eta^{ijk}_{k2}}(X)=l_{\eta^{ijk}_{i1}}(X).$$
 Hence
 \begin{eqnarray}\label{eq:relation}
 {l_{\eta_i}(X)-l_{\eta_j}(X)}+l_{\eta_k}(X)&=&
 2l_{\eta^{ijk}_{i1}}(X)=2l_{\eta^{ijk}_{k2}}(X), \nonumber\\
 {l_{\eta_j}(X)-l_{\eta_k}(X)}+l_{\eta_i}(X)&=&
 2l_{\eta^{ijk}_{j1}}(X)=2l_{\eta^{ijk}_{i2}}(X),\\
 {l_{\eta_k}(X)-l_{\eta_i}(X)}+l_{\eta_j}(X)&=&
 2l_{\eta^{ijk}_{k1}}(X)=2l_{\eta^{ijk}_{j2}}(X)\nonumber.
 \end{eqnarray}
 Now ~(\ref{eq:def}) can be rewritten as
 \begin{eqnarray}\label{eq-hilb-teich-2}
   d^{\Gamma}_h(X_1,X_2)&=&\frac{1}{2}\sup_{\Delta_{ijk}\in \mathbb T_\Gamma} \max\{\log\frac{l_{\eta^{ijk}_{i1}}(X_1)}{l_{\eta^{ijk}_{i1}}(X_2)}
   ,\log\frac{l_{\eta^{ijk}_{j1}}(X_1)}{l_{\eta^{ijk}_{j1}}(X_2)}
   ,\log\frac{l_{\eta^{ijk}_{k1}}(X_1)}{l_{\eta^{ijk}_{k1}}(X_2)}\}\\
   && + \frac{1}{2}\sup_{\Delta_{ijk}\in \mathbb T_\Gamma  } \max\{\log\frac{l_{\eta^{ijk}_{i1}}(X_2)}{l_{\eta^{ijk}_{i1}}(X_1)}
   ,\log\frac{l_{\eta^{ijk}_{j1}}(X_2)}{l_{\eta^{ijk}_{j1}}(X_1)}
   ,\log\frac{l_{\eta^{ijk}_{k1}}(X_2)}{l_{\eta^{ijk}_{k1}}(X_1)}\}. \nonumber
 \end{eqnarray}
 And the length of the horocycle $h^{ijk}_i$ can be expressed in terms of $l_{\eta^{ijk}_{j2}}$ and $l_{\eta^{ijk}_{k1}}$ as $l_{h^{ijk}_i}(X)=\exp (-l_{\eta^{ijk}_{k1}}(X))=\exp (-l_{\eta^{ijk}_{j2}}(X))$. Therefore
 \begin{equation}\label{eq:rho}
   l_{\eta^{ijk}_{i1}}\geq \rho_0,\ l_{\eta^{ijk}_{j1}}\geq \rho_0,\ l_{\eta^{ijk}_{k1}}\geq \rho_0,\text{ for any ideal triangle } \Delta_{ijk}\in \mathbb T_\Gamma.
 \end{equation}

We summarize our discussions above as the following proposition.
\begin{proposition}\label{prop:Hilbert}
  With the notations  above, the Hilbert metric $d^\Gamma_h$ on the Teichm\"uller space $T_{g,n}$ can be expressed as the following two forms.
  \begin{itemize}
    \item  For any $X_1,X_2\in T_{g,n}$,
     \begin{eqnarray}\label{eq:def2}
     {d}^{\Gamma}_h(X_1,X_2)&=&\frac{1}{2}(\sup_{\Delta_{ijk}\in \mathbb T_\Gamma}\log\frac{{l_{\eta_i}(X_1)-l_{\eta_j}(X_1)+l_{\eta_k}(X_1)}}
    {{l_{\eta_i}(X_2)-l_{\eta_j}(X_2)+l_{\eta_k}(X_2)}}\\
      &&
     +\sup_{\Delta_{ijk}\in \mathbb T_\Gamma}\log\frac{{l_{\eta_i}(X_2)
     -l_{\eta_j}(X_2)+l_{\eta_k}(X_2)}}{{l_{\eta_i}
      (X_1)-l_{\eta_j}(X_1)+l_{\eta_k}(X_1)}})\nonumber,
      \end{eqnarray}
      where, ${{l_{\eta_i}
      (X)-l_{\eta_j}(X)+l_{\eta_k}(X)}}\geq 2\rho_0$ for any $X\in T_{g,n}$ and any ideal triangle $\Delta_{ijk}\in \mathbb T_\Gamma$.
     \item For any $X_1,X_2\in T_{g,n}$,
       \begin{eqnarray*}
       d^{\Gamma}_h(X_1,X_2)&=&\frac{1}{2}\sup_{\Delta_{ijk}\in \mathbb T_\Gamma}\max\{\log\frac{l_{\eta^{ijk}_{i1}}(X_1)}{l_{\eta^{ijk}_{i1}}(X_2)}
      ,\log\frac{l_{\eta^{ijk}_{j1}}(X_1)}{l_{\eta^{ijk}_{j1}}(X_2)}
      ,\log\frac{l_{\eta^{ijk}_{k1}}(X_1)}{l_{\eta^{ijk}_{k1}}(X_2)}\}\\
      && + \frac{1}{2}\sup_{\Delta_{ijk}\in \mathbb T_\Gamma}\max\{\log\frac{l_{\eta^{ijk}_{i1}}(X_2)}{l_{\eta^{ijk}_{i1}}(X_1)}
     ,\log\frac{l_{\eta^{ijk}_{j1}}(X_2)}{l_{\eta^{ijk}_{j1}}(X_1)}
     ,\log\frac{l_{\eta^{ijk}_{k1}}(X_2)}{l_{\eta^{ijk}_{k1}}(X_1)}\},
      \end{eqnarray*}
      where
       $
        l_{\eta^{ijk}_{i1}}\geq \rho_0,\ l_{\eta^{ijk}_{j1}}\geq \rho_0,\ l_{\eta^{ijk}_{k1}}\geq \rho_0
       $
       for any ideal triangle $\Delta_{ijk}\in \mathbb T_\Gamma$.
    \end{itemize}
\end{proposition}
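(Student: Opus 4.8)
The plan is to read off both formulas from Yamada's hyperplane description (\ref{eq:Yamada}) of the Birkhoff--Hilbert metric, specialized to Hamenst\"adt's polytope; the argument is essentially an organization of the geometry already set up, so the work lies in identifying the supporting hyperplanes correctly and then translating Euclidean distances into lengths.

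First I would identify the cone and its supporting hyperplanes. By Theorem \ref{thm:Teich}, $\Lambda(X)$ lies strictly inside the cone $\mathcal C$ over the solid polytope $P$, and by the description of $\mathcal A$ the bounding facets of $\mathcal C$ are the loci where one of Hamenst\"adt's triangle inequalities from conditions (1)--(2) degenerates to an equality. Matching this with (\ref{pi}), each such facet lies on one of the hyperplanes $\pi_{ijk}$: the triangle inequality for the side whose index is the subtracted one in $a_i-a_j+a_k$ becomes tight on $\pi_{ijk}$. Hence $\mathcal P=\{\pi_{ijk}\}$ is the set of facet supporting hyperplanes, and since $\mathcal C$ is polyhedral the supremum over all supporting hyperplanes in (\ref{eq:Yamada}) is attained on this finite set.

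Next I would compute $d(\Lambda(X),\pi_{ijk})=\frac{1}{\sqrt{3}}\bigl(l_{\eta_i}(X)-l_{\eta_j}(X)+l_{\eta_k}(X)\bigr)$, the quantity being positive precisely because $\Lambda(X)$ is interior. Substituting into (\ref{eq:Yamada}) makes the factor $1/\sqrt{3}$ cancel inside each ratio and produces the first form (\ref{eq:def2}). To pass to the second form I would invoke the identities (\ref{eq:relation}) obtained by cutting each $\eta_i$ with its midpoint and the horocyclic arcs: the three cyclic combinations attached to a single triangle $\Delta_{ijk}=\Delta_{jki}=\Delta_{kij}$ equal $2l_{\eta^{ijk}_{i1}},\ 2l_{\eta^{ijk}_{j1}},\ 2l_{\eta^{ijk}_{k1}}$ and correspond to the three hyperplanes $\pi_{ijk},\pi_{jki},\pi_{kij}$. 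Reindexing the supremum over $\mathcal P$ as a supremum over triangles of the maximum over these three terms converts (\ref{eq:def}) into (\ref{eq-hilb-teich-2}). Finally, for the lower bounds I would use the horocycle-length relation $l_{h^{ijk}_i}(X)=\exp(-l_{\eta^{ijk}_{k1}}(X))$ together with the smallness of $\rho_0$ to obtain $l_{\eta^{ijk}_{i1}}\geq\rho_0$, which is (\ref{eq:rho}); combined with (\ref{eq:relation}) this gives $l_{\eta_i}(X)-l_{\eta_j}(X)+l_{\eta_k}(X)=2l_{\eta^{ijk}_{i1}}(X)\geq2\rho_0$.

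The hard part will be the first step: confirming that the $\pi_{ijk}$ are exactly the facet hyperplanes with the correct orientation, so that interior points yield positive values of $l_{\eta_i}-l_{\eta_j}+l_{\eta_k}$ and the sign convention in (\ref{pi}) is compatible with the counterclockwise labeling used in (\ref{eq:relation}). This is pure bookkeeping of the triangle-inequality signs, but it is the place where a sign slip would silently corrupt the metric formula; once it is settled, the remaining steps are routine substitutions.
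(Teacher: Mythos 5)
Your proposal follows essentially the same route as the paper's own derivation (the discussion in \S 2.4 preceding the proposition): identify the facet hyperplanes $\pi_{ijk}$ of Hamenst\"adt's cone, compute $d(\Lambda(X),\pi_{ijk})=\frac{1}{\sqrt3}(l_{\eta_i}(X)-l_{\eta_j}(X)+l_{\eta_k}(X))$, substitute into Yamada's formula (\ref{eq:Yamada}), pass to the second form via the midpoint identities (\ref{eq:relation}), and deduce the lower bounds from the horocycle-length relation as in (\ref{eq:rho}). This matches the paper's argument step for step (and is if anything slightly more careful about why the supremum may be restricted to the facet hyperplanes), so it is correct.
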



\remark
The reason that we do not consider the Funk metric is that the Funk metric is not projectively invariant while the Hilbert metric is. If we  choose a fixed homogenous coordinate for $\mathbb RP^{6g-6+2n}$, we can define a Funk metric on $T_{g,n}$.

\vskip 20pt
\section{Earthquake}\label{sec:earthquake}
\vskip 10pt
\subsection{Earthquake}\label{sec:def-earth}
For any $X\in T_{g,n}$, $\beta\in \mathcal S$, denote by $l_\beta(X)$ the length of the geodesic representative of $\beta$ on $X$.
In \cite{Ke}, Kerckhoff proved that for any simple closed curve $\beta$, $l_{\beta}(\mathcal E^t_\alpha X)$ is a convex function of $t$ along the earthquake line $\{\mathcal E^t_\alpha X\}_{t\in \mathbb R}$. Based on this result, Bonahon proved  \cite{Bon} that each (anti-) earthquake ray converges to a unique point in the Thurston boundary $\partial T_{g,n}$.

\begin{lemma}[\cite{Ke}] \label{lem:Ke}
For any $X\in T_{g,n}$, $\alpha\in \mathcal{ML}$ and $\beta\in \mathcal S$, with $i(\alpha,\beta)>0$, then
  $$
  \frac{d }{dt}l_{\beta}(\mathcal E^t_\alpha X)=\int_{\beta} \cos\theta_t d\alpha,
  $$
  where $\theta_t$ represents the angle at each intersection point between (the geodesic representatives of) $\beta$ and $\alpha$ on $\mathcal E^t_\alpha X$ measured counter-clockwise from $\beta$  to $\alpha$. Moreover, as t tends to $+\infty$ (resp. $-\infty$), the function $t\mapsto \cos \theta_t$ is strictly increasing (resp. decreasing).
\end{lemma}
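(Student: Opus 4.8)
The plan is to prove the formula first for a single simple closed geodesic $\alpha$, then for weighted multicurves by additivity, and finally for an arbitrary measured lamination by density and continuity; the monotonicity statement I would extract from an explicit analysis near a single crossing.

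First I would treat the case where $\alpha$ is a simple closed geodesic, so that $\mathcal{E}^t_\alpha$ is the Fenchel--Nielsen twist: cut $X$ along $\alpha^*$ and reglue after a left shift of signed length $t$. Let $\beta^*_t$ be the geodesic representative of $\beta$ on $\mathcal{E}^t_\alpha X$ and let $p_1,\dots,p_N$, with $N=i(\alpha,\beta)$, be its crossings with $\alpha$. The key point is that because $\beta^*_t$ is a length minimizer in its homotopy class, the first variation of its length is governed entirely by the motion of the crossing data: as $t$ increases by $dt$ the regluing displaces the two sides of $\alpha$ relative to one another by $dt$ tangentially along $\alpha$, and the first-variation-of-arclength formula $\delta L=\langle T,\delta p\rangle$ at each crossing, with $T$ the unit tangent to $\beta^*_t$, contributes $\cos\theta_t(p_j)$. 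Summing gives $\frac{d}{dt}l_\beta=\sum_{j=1}^N\cos\theta_t(p_j)$, which is exactly $\int_\beta\cos\theta_t\,d\alpha$ since for a simple closed geodesic the transverse measure is the counting measure at the crossings. To make this rigorous I would pass to $\mathbb{H}^2$ and express $l_\beta(t)$ via the translation length (equivalently the trace) of the holonomy of $\beta$ on the deformed Fuchsian group, where the twist inserts the hyperbolic translation $z\mapsto e^t z$ along each lift of $\alpha$; differentiating $2\operatorname{arccosh}(\tfrac12|\operatorname{tr}|)$ reproduces $\cos\theta_t$ and, as a by-product, yields the explicit analytic expression for $\theta_t$ that I will reuse below.

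For a weighted multicurve $\alpha=\sum_i w_i\gamma_i$ the formula then follows by linearity, since the earthquake decomposes into independent twists along disjoint geodesics and the right-hand side is additive in $\alpha$. To reach a general $\alpha\in\mathcal{ML}$ I would invoke the density of $\mathbb{R}^+\times\mathcal{S}$ in $\mathcal{ML}$ together with continuity: $\alpha\mapsto\mathcal{E}^t_\alpha X$ is continuous by Proposition \ref{prop:earthquake}, the map $t\mapsto l_\beta(\mathcal{E}^t_\alpha X)$ is convex (already available for multicurves from the single-curve monotonicity established next), and the right-hand side $\int_\beta\cos\theta_t\,d\alpha$ is weakly continuous in $\alpha$ because it is the pairing of the bounded continuous integrand $\cos\theta_t$ along $\beta$ against the transverse measure. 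Pointwise convergence of convex functions forces convergence of their derivatives at points of differentiability, which upgrades the multicurve formula to the general statement.

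Finally, for the monotonicity of $t\mapsto\cos\theta_t$ I would return to the single-crossing analytic expression obtained above. Writing the trace of the twisted holonomy in the form $a\cosh(t/2)+b\sinh(t/2)$ exhibits $l_\beta(t)$, and hence $\theta_t$, as an explicit function of $t$; as $t\to+\infty$ the geodesic $\beta^*_t$ aligns with the positive direction of $\alpha$, so $\theta_t\to 0$ and $\cos\theta_t$ increases strictly, with the symmetric behavior $\cos\theta_t\to-1$ as $t\to-\infty$. Equivalently, this monotonicity is the first-order content of Kerckhoff's strict convexity of length along earthquake lines, since formally $\frac{d^2}{dt^2}l_\beta=\int_\beta\frac{d}{dt}\cos\theta_t\,d\alpha$. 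I expect the main obstacle to be precisely this monotonicity/convexity step: establishing it cleanly at \emph{every} crossing simultaneously, and controlling it in the measured-lamination limit, is the delicate part, whereas the first-variation derivation of the formula itself is comparatively routine.
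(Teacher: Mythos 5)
You should first be aware that the paper contains no proof of this statement: Lemma \ref{lem:Ke} is quoted as a black box from Kerckhoff \cite{Ke} (it is the Wolpert--Kerckhoff derivative formula together with Kerckhoff's monotonicity of the intersection angles), and the paper only re-runs the same argument by reference for the arcs $\eta_i$ in Lemma \ref{lem:derivative}. So the relevant comparison is with Kerckhoff's proof. Your outline for the derivative formula itself --- first variation of arclength at each crossing for a single simple closed geodesic, additivity over the components of a weighted multicurve, then density of $\mathbb{R}^+\times\mathcal{S}$ in $\mathcal{ML}$ combined with convexity to pass derivatives to the limit --- is essentially the standard route and is sound in outline, modulo the usual care needed because the integrand $\cos\theta_t$ depends on $\alpha$ as well as the transverse measure when you take the limit.

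The genuine gap is the monotonicity clause, and your proposed mechanism for it does not work. When $N=i(\alpha,\beta)\geq 2$ the trace of the twisted holonomy is not of the form $a\cosh(t/2)+b\sinh(t/2)$: one twist matrix is inserted per crossing, so the trace is a Laurent polynomial of degree $N$ in $e^{t/2}$, and neither the convexity of $2\operatorname{arccosh}(|\mathrm{tr}|/2)$ nor the behaviour of the angle at any individual crossing can be read off from it; indeed the individual angles $\theta_t(p_j)$ are not functions of the trace (i.e.\ of $l_\beta(t)$) at all. Likewise, knowing $\cos\theta_t\to 1$ as $t\to+\infty$ and $\cos\theta_t\to-1$ as $t\to-\infty$ gives the limits but not strict monotonicity in between, and deducing monotonicity from ``Kerckhoff's strict convexity'' is circular, since that convexity is proved \emph{from} the monotonicity. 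The missing idea is Kerckhoff's geometric argument in $\mathbb{H}^2$: as $t$ increases, the two endpoints at infinity of a lift of $\beta^*_t$ move strictly monotonically along $\partial\mathbb{H}^2$ relative to the endpoints of each lift of a leaf of $\alpha$ it crosses, and an elementary lemma expressing the intersection angle of two geodesics in terms of the cross-ratio of their four endpoints then forces $\theta_t$ to decrease strictly at every crossing simultaneously, uniformly enough to survive the passage to general laminations. Without that step the ``Moreover'' part of the lemma --- precisely what the paper needs for the sign and convergence of $f_i(t)$ in Lemma \ref{lem:estimate} --- is not established.
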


\begin{lemma}[\cite{Bon}]\label{lem:Bon}
  For every $X\in T_{g,n}$ and every $\alpha\in \mathcal {ML}$,
  $$\lim_{t\to \pm \infty}\frac{1}{|t|}i(\mathcal E^t_\alpha X,\beta)=i(\alpha,\beta),\text{ for any }\beta \in \mathcal S. $$
\end{lemma}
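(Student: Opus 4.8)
The goal is Lemma~\ref{lem:Bon}, which asserts that for any fixed $\beta\in\mathcal S$ the intersection number $i(\mathcal E^t_\alpha X,\beta)$ grows asymptotically linearly with slope $i(\alpha,\beta)$ as $t\to\pm\infty$. The plan is to deduce this from Kerckhoff's derivative formula (Lemma~\ref{lem:Ke}) by controlling the angle functions $\theta_t$, rather than attempting a direct geometric estimate on intersection numbers. The starting observation is that for a simple closed curve $\beta$ with $i(\alpha,\beta)>0$, the ``intersection number'' of the moving lamination with $\beta$, in the sense relevant here, is most naturally tracked through the length $l_\beta(\mathcal E^t_\alpha X)$, and we have the exact expression $\frac{d}{dt}l_\beta(\mathcal E^t_\alpha X)=\int_\beta\cos\theta_t\,d\alpha$.

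First I would treat the case where $\alpha$ is a weighted simple closed curve (or more generally where $\beta$ is a simple closed curve), and then pass to the general case by density of $\mathbb R^+\times\mathcal S$ in $\mathcal{ML}$ together with continuity of the earthquake map (Proposition~\ref{prop:earthquake}) and of the intersection pairing. For the core case, the key is the second sentence of Lemma~\ref{lem:Ke}: as $t\to+\infty$ the function $t\mapsto\cos\theta_t$ is strictly increasing, and being bounded above by $1$ it converges to a limit at each intersection point. I would argue that this limit is $1$ (i.e.\ $\theta_t\to 0$), reflecting the fact that under a forward earthquake of large amplitude the geodesic representative of $\beta$ becomes nearly tangent to $\alpha$ along the portions where it crosses the support of $\alpha$. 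Granting this, $\frac{d}{dt}l_\beta(\mathcal E^t_\alpha X)\to\int_\beta 1\,d\alpha=i(\alpha,\beta)$ as $t\to+\infty$, and by the symmetric statement $\frac{d}{dt}l_\beta(\mathcal E^t_\alpha X)\to -i(\alpha,\beta)$ as $t\to-\infty$. An elementary Cesàro/mean-value argument then gives $\lim_{t\to+\infty}\frac{1}{t}l_\beta(\mathcal E^t_\alpha X)=i(\alpha,\beta)$, and likewise the $t\to-\infty$ limit with $1/|t|$.

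Next I would convert the length asymptotics into intersection-number asymptotics. The standard fact is that $\lim_{t\to\pm\infty}\frac{1}{|t|}l_\beta(\mathcal E^t_\alpha X)=i(\alpha,\beta)$ together with the fact that $\frac{1}{|t|}\mathcal E^t_\alpha X$ (viewed in $\mathcal{ML}$ via the length/intersection functional) converges projectively to $\alpha$ on the Thurston boundary; comparing the two renormalizations identifies the geometric intersection number $i(\mathcal E^t_\alpha X,\beta)$ with $l_\beta(\mathcal E^t_\alpha X)$ up to lower-order terms, since for a measured lamination $\mu$ one has $i(\mu,\beta)=\int_\beta\,d\mu$ realized as a length when $\mu$ is carried on geodesics transverse to $\beta$. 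Dividing by $|t|$ and taking the limit yields the claimed formula. Finally, I would remove the restriction $i(\alpha,\beta)>0$: when $i(\alpha,\beta)=0$ the curve $\beta$ can be realized disjointly from the support of $\alpha$, the earthquake fixes a neighborhood of $\beta$ up to isotopy, and $i(\mathcal E^t_\alpha X,\beta)$ stays bounded, so the limit is $0=i(\alpha,\beta)$.

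The main obstacle is the claim that $\cos\theta_t\to 1$ as $t\to+\infty$ at each intersection point, i.e.\ that the limiting angle is exactly $0$ rather than some intermediate value. Monotonicity and boundedness from Lemma~\ref{lem:Ke} guarantee convergence of $\cos\theta_t$ to a limit, but pinning that limit down to $1$ requires understanding the asymptotic geometry of the earthquake: one must show that the twisting forces the geodesic $\beta$ to spend an asymptotically full proportion of its crossing length running parallel to the leaves of $\alpha$. I expect this to follow from Bonahon's convergence result, namely that the projective class of $\mathcal E^t_\alpha X$ converges to $[\alpha]$ in $\partial T_{g,n}$, which forces the renormalized length of every test curve to converge to its intersection number with $\alpha$; this is precisely the content we want, so the careful point is to cite or reprove this convergence and check that the dominated/monotone passage to the limit under the integral $\int_\beta\cos\theta_t\,d\alpha$ is legitimate when $\alpha$ is a general (non-atomic) transverse measure.
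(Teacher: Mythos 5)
First, a point of orientation: the paper does not actually prove Lemma~\ref{lem:Bon}; it quotes it from Bonahon, and the notation $i(\mathcal E^t_\alpha X,\beta)$ refers to Bonahon's embedding of $T_{g,n}$ into $\mathbb R^{\mathcal S}$ by length functions, so the quantity to control is exactly $l_\beta(\mathcal E^t_\alpha X)$ and your separate ``conversion from length asymptotics to intersection-number asymptotics'' is not needed. The substantive problem with your argument is the step you yourself flag: you need $\cos\theta_t\to 1$ $\alpha$-almost everywhere along $\beta$ as $t\to+\infty$. Lemma~\ref{lem:Ke} gives monotonicity, hence a limit $c(p)\le 1$ at each intersection point $p$, and monotone convergence then gives $\frac{d}{dt}l_\beta(\mathcal E^t_\alpha X)\to\int_\beta c\,d\alpha$; but nothing in your argument forces $c\equiv 1$. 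Your proposed justification --- that $\mathcal E^t_\alpha X$ converges projectively to $[\alpha]$ in the Thurston boundary, hence renormalized lengths converge to intersection numbers --- is precisely Lemma~\ref{lem:Bon} restated, so the argument is circular. Monotonicity and $\cos\theta_t\le 1$ only yield the upper bound $\limsup_{t\to+\infty}\frac{1}{t}l_\beta(\mathcal E^t_\alpha X)\le i(\alpha,\beta)$; the entire content of the lemma is the matching lower bound, which cannot be extracted from Lemma~\ref{lem:Ke} alone.

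To close the gap you need an independent lower bound, and the paper's proof of the analogous Lemma~\ref{lem:estimate} (for the arcs $\eta_i$) shows the standard way to get one. For $\alpha$ a simple closed curve, sample the earthquake at the Dehn-twist times $t=ml_\alpha(X)$, where $l_\beta(\mathcal E^{ml_\alpha(X)}_\alpha X)=l_{T^{-m}_\alpha\beta}(X)$, and then either (i) run the surgery argument from the proof of Lemma~\ref{lem:estimate}, assembling closed curves homotopic to $\alpha$ out of arcs of $\beta^*$ and $(T^{-m}_\alpha\beta)^*$ to get $m\,i(\alpha,\beta)\,l_\alpha(X)\le l_{T^{-m}_\alpha\beta}(X)+l_\beta(X)+O(1)$, or (ii) use the purely topological fact that $\frac{1}{m}T^{-m}_\alpha\beta\to i(\alpha,\beta)\,\alpha$ in $\mathcal{ML}$ together with continuity and homogeneity of $\mu\mapsto l_\mu(X)$. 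Either route gives $\liminf_{m\to\infty}\frac{1}{ml_\alpha(X)}\,l_\beta(\mathcal E^{ml_\alpha(X)}_\alpha X)\ge i(\alpha,\beta)$, which combined with the convexity of $t\mapsto l_\beta(\mathcal E^t_\alpha X)$ and your upper bound pins down the limit (and, only a posteriori, forces $c\equiv 1$). Your reduction to weighted simple closed curves by density and your treatment of the case $i(\alpha,\beta)=0$ are fine as stated.
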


By applying similar arguments as in \cite {Ke} and \cite{Bon}, we get  similar results for $l_{\eta_i}(\mathcal E^t_\alpha X)$ (note that $l_{\eta_i}(\mathcal E^t_\alpha X)$ is the length of a particular compact subarc of $\eta_i$).

\begin{lemma} \label{lem:derivative}
 For any $X\in T_{g,n}$, $\alpha\in \mathcal{ML}$ and $\eta_i\in \Gamma$,
  $$
  \frac{d }{dt}l_{\eta_i}(\mathcal E^t_\alpha X)=\int_{\eta_i} \cos\theta_t d\alpha,
  $$
  where $\theta_t$ represents the angle at each intersection point between (the geodesic representatives of) $\eta_i$ and $\alpha$ on $\mathcal E^t_\alpha X$ measured counter-clockwise from $\eta_i$  to $\alpha$. Moreover, as t tends to $+\infty$ (resp. $-\infty$),
  the function $t\mapsto \cos \theta_t$ is strictly increasing (resp. decreasing).

\end{lemma}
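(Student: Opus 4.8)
The plan is to mimic the proofs of Lemma~\ref{lem:Ke} and Lemma~\ref{lem:Bon} from \cite{Ke} and \cite{Bon}, observing that the only essential property used in those arguments is that $\eta_i$ is a geodesic crossing the support of $\alpha$ transversally; whether $\eta_i$ is closed or a bi-infinite geodesic going into the cusp is irrelevant to the first-variation computation, as long as we differentiate the length of a \emph{fixed} compact subarc. First I would recall Kerckhoff's cosine formula for the derivative of the length of an arc under earthquake: the earthquake along $\alpha$ shears the surface transversally to the leaves of $\alpha$, and the infinitesimal effect on the length of any geodesic arc is obtained by integrating $\cos\theta_t$ against the transverse measure $d\alpha$, where $\theta_t$ is the angle of intersection. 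The subtlety here is that $\eta_i$ is not globally a closed geodesic, so I must argue that the portion of $\eta_i$ lying inside the cusp neighbourhood (above the fixed height $\rho_0$) does not meet the support of $\alpha$, hence contributes nothing to the derivative.

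The key steps, in order, are as follows. First I would fix the measured geodesic lamination $\alpha$ and note that $\alpha$ has compact support, so there is a height $\rho_1$ such that the support of $\alpha$ is disjoint from the part of the cusp neighbourhood of height greater than $\rho_1$; by choosing the cutting height $\rho_0$ appropriately (or by taking $\rho_0$ above $\rho_1$) the two endpoints of the compact subarc $l_{\eta_i}$ lie in a region where $\alpha$ is absent, so they are unaffected by the shear. Consequently the length of the compact subarc varies exactly as Kerckhoff's formula predicts for the intersections of $\eta_i$ with $\alpha$, all of which occur in the compact part. Second, I would invoke the standard first-variation-of-length-under-shear computation: decompose the earthquake deformation near each intersection point with a leaf of $\alpha$ and compute that the derivative of the length of the subarc is $\int_{\eta_i}\cos\theta_t\, d\alpha$, which is legitimate because by Proposition~\ref{prop:earthquake} it suffices to verify this on a dense set of weighted simple closed curves $\alpha$ and then pass to the limit by continuity. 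Third, for the monotonicity of $t\mapsto\cos\theta_t$, I would reproduce Kerckhoff's convexity argument: as $t\to+\infty$ the left earthquake rotates the tangent direction of $\eta_i$ relative to each leaf of $\alpha$ monotonically, so each angle $\theta_t$ decreases toward $0$ and hence $\cos\theta_t$ strictly increases; the $t\to-\infty$ case is symmetric.

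I expect the main obstacle to be the bookkeeping at the endpoints of the compact subarc, namely making rigorous the claim that differentiating the length of a subarc with \emph{moving} endpoints (the endpoints sit on the fixed horocycle $\rho=\rho_0$, but the geodesic $\eta_i$ itself is reparametrized by the earthquake) introduces no boundary terms. The clean way to handle this is to verify that the horocyclic foliation of the cusp is preserved by the earthquake above the support of $\alpha$, so that the intersection points of $\eta_i$ with the horocycle $\rho=\rho_0$ move only by the shear already accounted for, and the orthogonality of $\eta_i$ to the horocycles (noted in the construction before Theorem~\ref{thm:Teich}) guarantees that the first-order boundary contribution vanishes. Once this is established, the remainder of the proof is a direct transcription of the arguments in \cite{Ke} and \cite{Bon} with the closed geodesic $\beta$ replaced by the arc $\eta_i$, and I would state it as such rather than repeating every computation.
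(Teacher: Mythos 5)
Your proposal is correct and follows essentially the same route as the paper, which simply states that the proof is identical to Kerckhoff's proof of the cosine formula for closed geodesics; you additionally supply the one adaptation the paper leaves implicit, namely that the truncated arc's endpoints lie on horocycles orthogonal to $\eta_i$ in a cusp region disjoint from the compact support of $\alpha$, so no boundary terms arise. (One small remark: $\rho_0$ is fixed in advance and cannot be chosen depending on $\alpha$, but no choice is needed, since the support of any compactly supported lamination automatically avoids the standard cusp neighbourhood containing the horocycle $\rho=\rho_0$.)
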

\begin{proof}
  The proof is exactly the same as that of Lemma~\ref{lem:Ke} in \cite{Ke}.
\end{proof}
\begin{lemma}\label{lem:eta-convergence}
  For every $X\in T_{g,n}$ and every $\alpha\in \mathcal {ML}$,
  $$\lim_{t\to \pm \infty}\frac{1}{|t|}l_{\eta_i}(\mathcal E^t_\alpha X)=i(\alpha,\eta_i),\text{ for any }\eta_i\in \Gamma. $$
\end{lemma}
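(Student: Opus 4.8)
The plan is to read off the linear growth rate directly from the derivative formula of Lemma~\ref{lem:derivative}, combined with an elementary Ces\`aro-type fact. Set $f(t):=l_{\eta_i}(\mathcal E^t_\alpha X)$. Lemma~\ref{lem:derivative} gives
$$f'(t)=\int_{\eta_i}\cos\theta_t\,d\alpha,$$
where the integrand satisfies $|\cos\theta_t|\le 1$ and, by the monotonicity clause of that lemma, $t\mapsto\cos\theta_t$ is increasing as $t\to+\infty$ and decreasing as $t\to-\infty$ at every intersection point. Since $\alpha$ has compact support and $\eta_i$ is a geodesic realizing minimal intersection, the constant $1$ is $\alpha$-integrable along $\eta_i$, with $\int_{\eta_i}d\alpha=i(\alpha,\eta_i)<\infty$; moreover the support of $\alpha$ is disjoint from the cusp region in which the truncation defining $l_{\eta_i}$ takes place, so the integral above is effectively over a compact piece of $\eta_i$. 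Consequently $f'$ is bounded and, being monotone for large $|t|$, converges to limits $L_+:=\lim_{t\to+\infty}f'(t)$ and $L_-:=\lim_{t\to-\infty}f'(t)$.

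First I would invoke the elementary fact that if $f$ is $C^1$ and $f'(t)\to L$ as $t\to+\infty$, then $f(t)/t\to L$: writing $f(t)=f(T)+L(t-T)+\int_T^t(f'(s)-L)\,ds$ and bounding the last integral by $\epsilon(t-T)$ once $T$ is chosen with $|f'-L|<\epsilon$ on $[T,\infty)$ gives $f(t)/t\to L$. Applying this as $t\to+\infty$ yields $\lim_{t\to+\infty}f(t)/t=L_+$, and applying it to $s\mapsto f(-s)$ yields $\lim_{t\to-\infty}f(t)/|t|=-L_-$. It therefore remains to identify $L_+=i(\alpha,\eta_i)$ and $L_-=-i(\alpha,\eta_i)$, after which both one-sided limits equal $i(\alpha,\eta_i)$ and the lemma follows.

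The crux, which I expect to be the main obstacle, is the \emph{asymptotic alignment} statement: $\cos\theta_t\to 1$ for $\alpha$-almost every intersection point as $t\to+\infty$, and $\cos\theta_t\to-1$ as $t\to-\infty$. Granting this, the bounded (equivalently monotone) convergence theorem with dominating function $1$ gives $L_+=\int_{\eta_i}1\,d\alpha=i(\alpha,\eta_i)$ and $L_-=-i(\alpha,\eta_i)$, finishing the argument. Geometrically the claim says that as one shears indefinitely to the left, the geodesic $\eta_i$ is dragged so that at each crossing it becomes asymptotically parallel to the leaves of $\alpha$ (and anti-parallel as $t\to-\infty$): to join two points whose $\alpha$-separation grows like $|t|$, the geodesic $\eta_i$ must hug $\alpha$ and cross it at an angle tending to $0$. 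This is exactly the mechanism underlying Bonahon's Lemma~\ref{lem:Bon}, and the analysis of \cite{Bon} (resting on the angle monotonicity of \cite{Ke}) transfers verbatim, since it is entirely local near the compactly supported lamination $\alpha$ and never uses that the transverse object is a closed curve. The only genuinely new point is that $\eta_i$ escapes into the cusp $O$; but the part of $\eta_i$ beyond $\mathrm{supp}\,\alpha$ carries no transverse measure and does not enter $f'$, so the cusp causes no difficulty, and one may first verify the alignment for weighted simple closed curves and pass to the limit using density of $\mathbb R^+\times\mathcal S$ in the space of laminations together with the monotone structure of $\cos\theta_t$.
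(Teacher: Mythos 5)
The soft half of your argument is fine, and is even a slightly different packaging from the paper's: by Lemma \ref{lem:derivative} the derivative $f'(t)=\int_{\eta_i}\cos\theta_t\,d\alpha$ is increasing in $t$ and bounded in absolute value by $i(\alpha,\eta_i)$, so the limits $L_\pm=\lim_{t\to\pm\infty}f'(t)$ exist, and your Ces\`aro step correctly converts this into $\lim_{t\to\pm\infty}f(t)/|t|=\pm L_\pm$. But all that monotonicity and the bound $\cos\theta_t\le 1$ give for free is the one-sided inequality $L_+\le i(\alpha,\eta_i)$ (and $L_-\ge -i(\alpha,\eta_i)$). The entire content of the lemma is the reverse inequality, i.e.\ your ``asymptotic alignment'' claim that $\cos\theta_t\to 1$ for $\alpha$-a.e.\ intersection point, and this you do not prove: nothing in your argument rules out the angles increasing only to some $\theta_\infty>0$, in which case $f(t)/t$ would converge to a limit strictly smaller than $i(\alpha,\eta_i)$. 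The geometric justification you offer (``to join two points whose $\alpha$-separation grows like $|t|$, the geodesic must hug $\alpha$'') is circular, since it presupposes that the length grows at rate $i(\alpha,\eta_i)$, which is exactly the statement being proved. In \cite{Ke} and \cite{Bon} the logical order is the opposite of yours: one first establishes the linear lower bound on length by a separate argument, and alignment of the angles is a by-product, not an input.

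The paper closes precisely this gap in the proof of Lemma \ref{lem:estimate}, from which the present lemma is deduced by dividing the uniform bound $|l_{\eta_i}(\mathcal E^t_\alpha X)-|t|\,i(\alpha,\eta_i)|\le l_{\eta_i}(X)+e^{-\rho_0}$ by $|t|$. The missing lower bound is obtained there by a concrete topological count: for $\alpha$ a simple closed geodesic, $\mathcal E^{ml_\alpha(X)}_\alpha X$ is $m$ Dehn twists, and the arcs into which $\eta_i^*$ and $(T^{-m}_\alpha\eta_i)^*$ cut each other outside the cusp neighbourhood, closed up by two arcs of the horocycle $h_{\rho_0}$, form $m\,i(\eta_i,\alpha)$ closed curves each homotopic to $\alpha$; comparing lengths gives $m\,i(\eta_i,\alpha)\,l_\alpha(X)\le l_{T^{-m}_\alpha\eta_i}(X)+l_{\eta_i}(X)+e^{-\rho_0}$, which combined with the monotonicity of $f_i(t)=l_{\eta_i}(\mathcal E^t_\alpha X)-t\,i(\eta_i,\alpha)$ yields the two-sided bound; density of weighted simple closed curves in $\mathcal{ML}$ then handles general laminations. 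Some such quantitative estimate (or an actual proof of alignment for simple closed curves, which is of comparable difficulty) is what your proposal needs; the assertion that ``the analysis of \cite{Bon} transfers verbatim'' is not a substitute for supplying it.
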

\begin{proof}
  The proof is exactly the same as that of Lemma~\ref{lem:Bon} in \cite{Bon}. Also, it follows from Lemma~\ref{lem:estimate}.
\end{proof}
In fact, we can prove a stronger result.
\begin{lemma}\label{lem:estimate}
  For every $X\in T_{g,n}$ and every $\alpha\in \mathcal {ML}$,
  $$|l_{\eta_i}(\mathcal E^t_\alpha X)-|t| i(\alpha, \eta_i)|\leq l_{\eta_i}(X)+e^{-\rho_0},\text{ for any } \eta_i\in \Gamma\text{ and } t\in \mathbb R,$$
  where $e^{-\rho_0}$ represents the length of the horocycle centered at the puncture $O$  with height $\rho_0$. Moreover, there is a constant $c^+_i,\ c^-_i\in [-l_{\eta_i}(X)-e^{-\rho_0},+l_{\eta_i}(X)+e^{-\rho_0}]$ such that
  $$\lim_{t\to +\infty} [l_{\eta_i}(\mathcal E^t_\alpha X)-t i(\alpha, \eta_i)]=c^+_i,\ \lim_{t\to -\infty} [l_{\eta_i}(\mathcal E^t_\alpha X)-|t| i(\alpha, \eta_i)]=c^-_i.$$
\end{lemma}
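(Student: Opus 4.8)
Write $g(t):=l_{\eta_i}(\mathcal E^t_\alpha X)$ and $m:=i(\alpha,\eta_i)\ge 0$, so that $g(0)=l_{\eta_i}(X)$ and the quantity to be controlled is $g(t)-|t|m$. The plan is to extract as much as possible from Lemma~\ref{lem:derivative}, which gives $g'(t)=\int_{\eta_i}\cos\theta_t\,d\alpha$; since $t\mapsto\cos\theta_t$ is monotone, $g'$ is monotone and $g$ is convex, while $|\cos\theta_t|\le 1$ forces $g'(t)\in[-m,m]$ for every $t$. If $m=0$ then $\alpha$ is disjoint from $\eta_i$, so $g'\equiv 0$ and $g\equiv l_{\eta_i}(X)$, and all assertions hold with $c^+_i=c^-_i=l_{\eta_i}(X)$; hence I assume $m>0$. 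The upper half of the desired inequality is then immediate: for $t\ge 0$ one has $g(t)=g(0)+\int_0^t g'(s)\,ds\le g(0)+tm$, i.e. $g(t)-tm\le l_{\eta_i}(X)$, and symmetrically $g(t)-|t|m\le l_{\eta_i}(X)$ for $t\le 0$ using $g'\ge -m$.

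The heart of the matter is the matching lower bound $g(t)\ge |t|m-l_{\eta_i}(X)-e^{-\rho_0}$, and this is the step I expect to be the main obstacle, because it cannot be produced by a homotopy comparison: replacing $\eta_i$ on $\mathcal E^t_\alpha X$ by any test curve bounds its geodesic length only from above, and both the left earthquake $X\to\mathcal E^t_\alpha X$ and its inverse \emph{lengthen} a transverse arc by the inserted shift, so neither direction yields a lower bound. I would instead argue geometrically that after a large left earthquake the geodesic representative of $\eta_i$ must run nearly along the leaves of $\alpha$ for a total amount close to $|t|m$. Concretely, I would first take $\alpha=c\gamma$ a weighted simple closed geodesic, so that $\mathcal E^t_\alpha X=\mathcal E^{ct}_\gamma X$ is an honest twist by $T=ct$; passing to the annular cover associated to $\gamma$, whose core has the $t$-independent length $l_\gamma$, each of the $i(\gamma,\eta_i)$ strands of $\eta_i$ crossing the core is forced, by the shearing of its two sides by $T$, to have length at least $T$ up to an error controlled by the collar geometry, and summing over the strands gives $g(t)\ge t\,c\,i(\gamma,\eta_i)-C=tm-C$. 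The delicate part is to sharpen the bounded error to exactly $l_{\eta_i}(X)+e^{-\rho_0}$: the term $l_{\eta_i}(X)$ should account for the transverse portions of $\eta_i$ that are not spent running along $\alpha$, and the term $e^{-\rho_0}$ for the two excursions into the cusp $O$, whose endpoints on the horocycle $\rho=\rho_0$ can be matched only up to the horocyclic length $e^{-\rho_0}$. Finally I would remove the restriction to simple closed curves using the density of $\mathbb R^+\times\mathcal S$ in $\mathcal{ML}$, together with the continuity of $(\alpha,t)\mapsto\mathcal E^t_\alpha X$ from Proposition~\ref{prop:earthquake} and the continuity of length and intersection functions, so that the (closed) inequality persists for every $\alpha\in\mathcal{ML}$.

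Granting the two-sided bound $|g(t)-|t|m|\le l_{\eta_i}(X)+e^{-\rho_0}$, the ``moreover'' statement is a soft consequence of convexity and needs no further geometry. For $t\ge 0$ put $h(t)=g(t)-tm$; then $h'(t)=g'(t)-m\le 0$, so $h$ is non-increasing and bounded below by $-l_{\eta_i}(X)-e^{-\rho_0}$, whence $c^+_i:=\lim_{t\to+\infty}h(t)$ exists and lies in $[-l_{\eta_i}(X)-e^{-\rho_0},\,l_{\eta_i}(X)]$. Symmetrically, for $t\le 0$ the function $k(t)=g(t)+tm=g(t)-|t|m$ has $k'(t)=g'(t)+m\ge 0$, so $k$ is non-decreasing in $t$ and therefore decreases to a limit $c^-_i$ as $t\to-\infty$, again confined by the bound to the required interval. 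Dividing by $|t|$ in the bound also recovers Lemma~\ref{lem:eta-convergence}, as advertised.
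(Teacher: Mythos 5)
Your treatment of the upper bound ($f_i(t)\le f_i(0)$, from $\cos\theta_t<1$ in Lemma~\ref{lem:derivative}), the reduction from weighted simple closed curves to general $\alpha\in\mathcal{ML}$ by density and continuity, and the ``moreover'' statement (monotonicity plus boundedness of $f_i$) all coincide with the paper's proof. The genuine gap is in the lower bound $l_{\eta_i}(\mathcal E^t_\alpha X)\ge |t|\,i(\alpha,\eta_i)-l_{\eta_i}(X)-e^{-\rho_0}$, which you rightly identify as the crux but do not establish: your annular-cover/shearing argument is only a sketch, and you concede that ``the delicate part is to sharpen the bounded error to exactly $l_{\eta_i}(X)+e^{-\rho_0}$'' without doing it. Since the lemma asserts this specific constant (and the interval containing $c_i^{\pm}$ depends on it), the proof is incomplete as written; a strand-by-strand estimate in the annular cover naturally produces error terms depending on the collar geometry of $\gamma$ and on how the strands of $\eta_i$ recur to the collar, and it is not clear that these can be massaged into the clean constant claimed.

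The device you are missing is the paper's reduction to discrete times. Because $f_i(t)=l_{\eta_i}(\mathcal E^t_\alpha X)-t\,i(\eta_i,\alpha)$ is decreasing on $[0,\infty)$, it suffices to prove the lower bound at $t=m\,l_\alpha(X)$ for $m\in\mathbb N^+$, where the earthquake is the $m$-th power of the Dehn twist $T_\alpha$, so that $l_{\eta_i}(\mathcal E^{m l_\alpha(X)}_\alpha X)=l_{T_\alpha^{-m}\eta_i}(X)$ and everything can be measured on the fixed surface $X$. There the geodesic representatives $\eta_i^*$ and $(T_\alpha^{-m}\eta_i)^*$ meet in $m\,i(\eta_i,\alpha)$ points (one of them the puncture $O$); cutting both curves at these points outside the cusp neighbourhood and concatenating the resulting $2m\,i(\eta_i,\alpha)$ arcs, together with two arcs of the horocycle $h_{\rho_0}$, produces $m\,i(\eta_i,\alpha)$ closed curves, each isotopic to $\alpha$. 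Comparing total lengths gives $m\,i(\eta_i,\alpha)\,l_\alpha(X)\le l_{T_\alpha^{-m}\eta_i}(X)+l_{\eta_i}(X)+e^{-\rho_0}$, which is exactly the lower bound with the stated constant, and monotonicity of $f_i$ then propagates it to all $t\ge 0$. This cut-and-paste comparison, not a shearing estimate, is the step to insert in place of your annular-cover sketch.
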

\begin{proof}
  By symmetry, it suffices to prove the lemma for $t\geq 0$. We start with the case that $\alpha$ is a simple closed geodesic.
  Let $f_i(t)\triangleq l_{\eta_i}(\mathcal E^t_\alpha X)-ti(\eta_i,\alpha)$. We will show that,
   \begin{equation}\label{eq:simple}
   |f_i(t)|\leq l_{\eta_i}(X)+e^{-\rho_0},\text{ for any } \eta_i\in \Gamma\text{ and } t\geq 0.
   \end{equation}

   From Lemma \ref{lem:derivative},
  $$ \frac{df_i(t)}{dt}<0,\text{ for any } t\geq 0.$$

  As a consequence, we get our first inequality immediately,
  $$f_i(t)\leq f_i(0)=l_{\eta_i}(X).$$

  By the monotonicity of $f_i(t)$, for any positive integer $m$, we have
  $$  f_i(t)\geq f_i(ml_\alpha(X)),
  \text{ for any } t\in[0,ml_\alpha(X)]. $$

  Therefore, to prove $f_i(t)\geq -l_{\eta_i}(X)-e^{-\rho_0}$ for all $t\geq 0$, it suffices to prove  $f_i(ml_\alpha(X))\geq -l_{\eta_i}(X)-e^{-\rho_0}$ for all $m\in \mathbb N^+$.

  \begin{figure}
    \includegraphics[width=120mm]{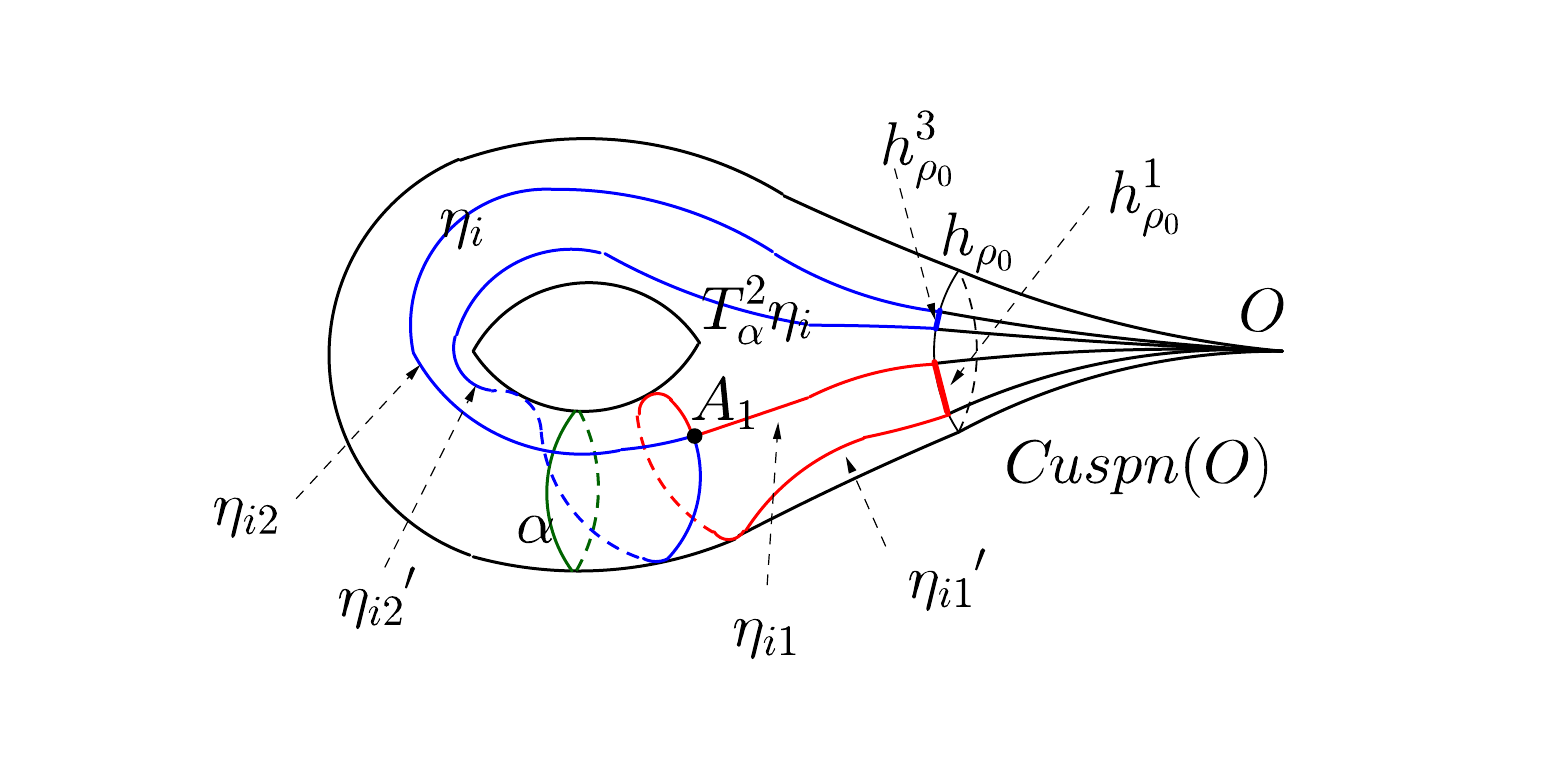}

    \caption{Dehn twist. $T^2_\alpha \beta$ is obtained from $\beta$ by two times Dehn twist along $\alpha$. }
    \label{fig:closed-curves}
  \end{figure}
  Recall that $\mathcal E^{ml_\alpha(X)}_\alpha X$ is obtained from $X$ by $m$ times Dehn twist along $\alpha$. Denote by $T^m_\alpha$ the $m$ times Dehn twist along $\alpha$. Hence $l_{\eta_i}(\mathcal E^{ml_\alpha(X)}_\alpha X)=l_{T^{-m}_\alpha\eta_i}(X)$. Denote by $h_{\rho_0}$ the horocycle centered at the puncture  $O$ with height $\rho_0$, and denote by $\eta_i^*$, $(T^{-m}_\alpha \eta_i)^*$ the geodesic representatives of $\eta_i$, $T^{-m}_\alpha \eta_i$ on $X$, respectively. It is clear that $\eta_i^*$ and $(T^{-m}_\alpha \eta_i)^*$ have $mi(\eta_i,\alpha)$ intersection points, one of which  is the puncture $O$. The remaining $mi(\eta_i,\alpha)-1$ intersection points cut the restrictions of  $\eta_i^*$ and $(T^{-m}_\alpha \eta_i)^*$ on $X\backslash \textup{Cuspn}(O)$ into $2mi(\eta_i,\alpha)$ segments $\eta_{i1},\eta_{i1}',...,\eta_{il},\eta_{il}'$, where $l=mi(\eta_i,\alpha)$ and  $\textup{Cuspn}(O)$ represents the cusp neighbourhood of $O$ with boundary horocycle $h_{\rho_0}$. On the other hand, $\eta_i^*$ and $(T^{-m}_\alpha \eta_i)^*$ cut the horocycle $h_{\rho_0}$ into four segments $h^1_{\rho_0},h^2_{\rho_0},h^3_{\rho_0},h^4_{\rho_0}.$ Two of these four segments, say $h^1_{\rho_0},h^3_{\rho_0}$, together with above mentioned $2mi(\eta_i,\alpha)$ segments $\eta_{i1},\eta_{i1}',...,\eta_{il},\eta_{il}'$ consist $mi(\eta_i,\alpha)$ simple closed curves, and all of them are homotopic to $\alpha$ (see Fig.~\ref{fig:closed-curves}). More precisely, let
  $\gamma_1=\eta_{i1}*\eta_{i1}'* h_{\rho_0}^1$,
  $\gamma_2=\eta_{i2}*\eta_{i2}'$, $\cdots$,
  $\gamma_{l-1}=\eta_{i,l-1}*\eta_{i,l-1}'$,
  $\gamma_l=\eta_{il}*\eta_{il}'* h_{\rho_0}^3$. $\gamma_1,\cdots,\gamma_l$ are simple closed curves, and all of them are isotopic to $\alpha$.
   It follows that
  $$mi(\eta_i,\alpha)l_\alpha(X)\leq l_{T^{-m}_\alpha \eta_i}(X)+l_{\eta_i}(X)+e^{-\rho_0},\text{ for any } m\in \mathbb N^+,$$
  which means that
  $$
  f_i(ml_\alpha(X))\geq -l_{\eta_i}(X)-e^{-\rho_0},\text{ for any } m\in \mathbb N^+.
  $$
  This completes the proof of (\ref{eq:simple}).
  \vskip 5pt
  Now we extend these estimates to an arbitrary measured geodesic lamination $\mu$. Recall that $\mathbb R^+\times \mathcal S$ is dense in $\mathcal {ML}$. There exists $(s_m,\alpha_m)\in\mathbb R^+\times \mathcal S$ such that $s_m\alpha_m\to \mu$, as $m\to \infty$. It follows from Proposition \ref{prop:earthquake} that for any given $t\in \mathbb R$, $\mathcal E^t_{s_m\alpha_m}X\to\mathcal E^t_\mu X$ as $m\to \infty$. Hence for any given $t\in\mathbb R$, $l_{\eta_i}(\mathcal E^t_{s_m\alpha_m}X)\to l_{\eta_i}(\mathcal E^t_\mu X)$ as $m\to \infty$. From ~(\ref{eq:simple}), we have
  $$ |ts_mi(\alpha_m,\eta_i)-l_{\eta_i}(\mathcal E^{t}_{s_m\alpha_m}X)|\leq l_{\eta_i}(X)+e^{-\rho_0}\ \text{for any } m\in \mathbb N^+,\ \eta_i\in\Gamma\text{ and } t\geq 0.$$
  Let $m$ tend to infinity, we have
  $$|l_{\eta_i}(\mathcal E^t_\mu X)-t i(\mu, \eta_i)|\leq l_{\eta_i}(X)+e^{-\rho_0}, \text{ for any } \eta_i\in \Gamma\text{ and } t\geq0.$$

  \vskip 5pt
  The existence of the limit $\lim_{t\to +\infty}[l_{\eta_i}(\mathcal E^t_\mu X)-t i(\mu, \eta_i)]$ follows from the boundedness and the monotonicity of $l_{\eta_i}(\mathcal E^t_\mu X)-t i(\mu, \eta_i)$.
\end{proof}
\vskip 10pt

Based on the estimates in Lemma~\ref{lem:estimate}, we can describe the coordinates of an earthquake line $\{\mathcal E^t_\mu X\}_{t\in \mathbb R}$. Denote by $P_1$ the hyperplane
 $$\{(x_1,x_2,...,x_{6g-5+2n}):x_1+x_2+...+x_{6g-5+2n}=1\}, $$
 and  denote by $\pi$ the projective map
 $$
 \begin{array}{cccc}
  \pi:&\mathbb R_+^{6g-5+2n} &\longrightarrow & P_1\\
   &(x_1,x_2,...,x_{6g-5+2n})&\longmapsto &\frac{(x_1,x_2,...,x_{6g-5+2n})}{x_1+x_2+...+x_{6g-5+2n}}.
  \end{array}
 $$
 For a measured geodesic lamination $\mu$, let $I(\mu)=\Sigma_{i=1}^{6g-5+2n}i(\eta_i,\mu)$.  It is clear that $\pi\circ \Lambda(\mu)=I(\mu)^{-1}(i(\eta_1,\mu),...,i(\eta_{6g-5+2n},\mu)).$
 Moreover, it follows from Lemma~\ref{lem:estimate} that for large enough $|t|$,
 $$\pi\circ\Lambda(\mathcal E^t_\mu X)=\pi\circ \Lambda(\mu)+\frac{1}{|t|}\frac{c_i}{I(\mu)}+o(\frac{1}{|t|}). $$
 In other words, the images of the earthquake rays $\{\mathcal E^t_\mu X\}_{t\geq T}$ and $\{\mathcal E^t_\mu X\}_{t\leq -T}$ in $P_1$ look like straight line segments when $T$ is large enough.

\vskip 10pt
 Now we  prove the main theorem of this paper.
\begin{mtheorem}
  After reparametrization, every (anti-)earthquake ray is an almost-geodesic in $(T_{g,n},d^{\Gamma}_h)$.
\end{mtheorem}
\begin{proof}
  By symmetry, it suffices to prove the theorem for every earthquake ray.
  Let $\{\mathcal E^{t}_{\mu}(X)\}_{t\geq0}$ be an earthquake ray directed by $\mu$ and starting at $X$. Set $X_t=\mathcal E^{t}_{\mu}(X)$.
  Let $f_i(t)=l_{\eta_i}(\mathcal E^t_\mu X)-t i(\mu,\eta_i)$. It follows from Lemma \ref{lem:estimate} that
 $$|f_i(t)|<l_{\eta_i}(X)+e^{-\rho_0},$$
  and that for any $\epsilon>0$ there is a constant $T_1>0$ depending on $\epsilon$, $X$, $\Gamma$ and $\mu$ such that
  $$ |f_i(t)-c_i|<\epsilon,\text{ for any } t>T_1. $$
 Moreover, if $i(\mu,\eta_j)=0$ for some $j$,  $l_{\eta_j}(\mathcal E^t_\mu X)\equiv l_{\eta_j}(X)$.

 \vskip 5pt
 Next we divide the set of ideal triangles corresponding to the preferred triangulation $\Gamma$ into three types:
  \begin{itemize}
    \item \Type A consists of  ideal triangles $\Delta_{ijk}$ which do not intersect $\mu$, i.e. $i(\eta_i,\mu)=i(\eta_j,\mu)=i(\eta_k,\mu)=0$;
    \item \Type B consists of  ideal triangles $\Delta_{ijk}$ whose two sides , say $\eta_i,\eta_j$,   intersect $\alpha$ and whose remaining side does not intersect $\mu$, i.e. $i(\eta_i,\mu)=i(\eta_j,\mu)>0$ and $i(\eta_k,\mu)=0$;
    \item \Type C consists of ideal triangles $\Delta_{ijk}$ with each side  intersecting $\mu$,  i.e. $i(\eta_i,\mu)>0,i(\eta_j,\mu)>0,i(\eta_k,\mu)>0$.
  \end{itemize}
 For each  triangle $\Delta_{ijk}$, set
  \begin{equation}\label{eq:d-ijk}
  d_{ijk}=\frac{c_i-c_j+c_k}
 {{l_{\eta_i}(X_0)-l_{\eta_j}(X_0)+l_{\eta_k}(X_0)}}\textup{  and }
 \bar d_{ijk}=\frac{i(\eta_i,\mu)-i(\eta_j,\mu)+i(\eta_k,\mu)}
 {{l_{\eta_i}(X_0)-l_{\eta_j}(X_0)+l_{\eta_k}(X_0)}}.
 \end{equation}
 \vskip 10pt

 Now we discuss case by case.
 \vskip 5pt
  \Type A. In this case, $i(\eta_i,\mu)=i(\eta_j,\mu)=i(\eta_k,\mu)=0$.
 $$ \frac{{l_{\eta_i}(X_s)-l_{\eta_j}(X_s)+l_{\eta_k}(X_s)}}
{{l_{\eta_i}(X_t)-l_{\eta_j}(X_t)+l_{\eta_k}(X_t)}}
 \equiv \frac{{l_{\eta_i}(X_0)-l_{\eta_j}(X_0)+l_{\eta_k}(X_0)}}
{{l_{\eta_i}(X_0)-l_{\eta_j}(X_0)+l_{\eta_k}(X_0)}}
\equiv 1,\text{ for any } s,t\geq0.
 $$

 \vskip 10pt
  \Type B. In this case, $i(\eta_i,\mu)=i(\eta_j,\mu)>0$ and $i(\eta_k,\mu)=0$.
  Recall that for any ideal triangle $\Delta_{i'j'k'}$ (see Proposition~\ref{prop:Hilbert}),
 $$ \frac{{l_{\eta_{i'}}(X_s)-l_{\eta_{j'}}(X_s)+l_{\eta_{k'}}(X_s)}}{2} \geq \rho_0>0. $$
 Hence for any $\epsilon>0$, there is a constant $T_2>0$ depending on $\epsilon$, $\Gamma$, $X$ and $\mu$ such that \text{ for any } $s,t>T_2$, we have
 \begin{eqnarray*}
  \frac{{l_{\eta_i}(X_s)-l_{\eta_j}(X_s)+l_{\eta_k}(X_s)}}
 {{l_{\eta_i}(X_t)-l_{\eta_j}(X_t)+l_{\eta_k}(X_t)}}
 &=&\frac{f_i(s)-f_j(s)+l_{\eta_k}(X)}{f_i(t)-f_j(t)+l_{\eta_k}(X)}\\
 &\in& (1-\epsilon,1+\epsilon),\\
  \frac{{l_{\eta_k}(X_s)-l_{\eta_i}(X_s)+l_{\eta_j}(X_s)}}
{{l_{\eta_k}(X_t)-l_{\eta_i}(X_t)+l_{\eta_j}(X_t)}}&\in & (1-\epsilon,1+\epsilon),
\end{eqnarray*}
and
\begin{eqnarray*}
   \frac{{l_{\eta_j}(X_s)-l_{\eta_k}(X_s)+l_{\eta_i}(X_s)}}
   {{l_{\eta_j}(X_t)-l_{\eta_k}(X_t)+l_{\eta_i}(X_t)}}
   &=&
   \frac{i(\eta_j,\mu)s+i(\eta_i,\mu)s+f_i(s)+f_j(s)-l_{\eta_k}(X)}
   {{i(\eta_j,\mu)t+i(\eta_i,\mu)t+f_i(s)+f_j(s)-l_{\eta_k}(X)}}\\
   &\in& ((1-\epsilon)\frac{s}{t},(1+\epsilon)\frac{s}{t}).
 \end{eqnarray*}

 Moreover, there is $T_2'>0$  depending on $\epsilon$, $\Gamma$, $X$ and $\mu$ such that \text{ for any } $s,t>T_2'$, we have
 \begin{eqnarray*}
  \frac{{l_{\eta_i}(X_s)-l_{\eta_j}(X_s)+l_{\eta_k}(X_s)}}
 {{l_{\eta_i}(X_0)-l_{\eta_j}(X_0)+l_{\eta_k}(X_0)}}
 &=&\frac{f_i(s)-f_j(s)+l_{\eta_k}(X)}
 {{l_{\eta_i}(X_0)-l_{\eta_j}(X_0)+l_{\eta_k}(X_0)}}\\
 &\in& ((1-\epsilon)d_{ijk},(1+\epsilon)d_{ijk}),\\
  \frac{{l_{\eta_k}(X_s)-l_{\eta_i}(X_s)+l_{\eta_j}(X_s)}}
 {{l_{\eta_k}(X_0)-l_{\eta_i}(X_0)+l_{\eta_j}(X_0)}}
 &\in& ((1-\epsilon)d_{kij},(1+\epsilon)d_{kij}),
 \end{eqnarray*}
 and
 \begin{eqnarray*}
 \frac{{l_{\eta_j}(X_s)-l_{\eta_k}(X_s)+l_{\eta_i}(X_s)}}
 {{l_{\eta_j}(X_0)-l_{\eta_k}(X_0)+l_{\eta_i}(X_0)}}
 &\in& ((1-\epsilon)s\bar d_{jki},(1+\epsilon)s\bar d_{jki}),
 \end{eqnarray*}
 where $d_{ijk}$ and $\bar d_{ijk}$ are defined by ~(\ref{eq:d-ijk}).
 \vskip 10pt
 \Type C. In this case, $i(\eta_i,\mu)>0,i(\eta_j,\mu)>0,i(\eta_k,\mu)>0$.
 We distinguish two subcases.

 \vskip 5pt
  If $i(\eta_i,\mu)-i(\eta_j,\mu)+i(\eta_k,\mu)>0$, it follow from Lemma~
  \ref{lem:estimate} that there is a constant $T_3>0$ depending on $\epsilon$, $\Gamma$, $X$ and $\mu$ such that \text{ for any } $s,t>T_3$, we have
 \begin{eqnarray*}
   && \frac{{l_{\eta_i}(X_s)-l_{\eta_j}(X_s)+l_{\eta_k}(X_s)}}
    {{l_{\eta_i}(X_t)-l_{\eta_j}(X_t)+l_{\eta_k}(X_t)}}\\
    &=&
    \frac{i(\eta_i,\mu)s-i(\eta_j,\mu)s+i(\eta_k,\mu)s+f_i(s)-f_j(s)+f_k(s)}
    {i(\eta_i,\mu)t-i(\eta_j,\mu)t+i(\eta_k,\mu)t+f_i(s)-f_j(s)+f_k(s)}\\
    &\in& ((1-\epsilon)\frac{s}{t},(1+\epsilon)\frac{s}{t}),\\
    \textup{and}\\
       && \frac{{l_{\eta_i}(X_s)-l_{\eta_j}(X_s)+l_{\eta_k}(X_s)}}
    {{l_{\eta_i}(X_0)-l_{\eta_j}(X_0)+l_{\eta_k}(X_0)}}\\
   &\in& ((1-\epsilon)s\bar d_{ijk},(1+\epsilon)s\bar d_{ijk}).
 \end{eqnarray*}

 If $i(\eta_i,\mu)-i(\eta_j,\mu)+i(\eta_k,\mu)=0$, there is a constant $T_3'>0$ depending on $\epsilon$, $\Gamma$, $X$ and $\mu$ such that \text{ for any } $s,t>T_3'$, we have
 \begin{eqnarray*}
    &&\frac{{l_{\eta_i}(X_s)-l_{\eta_j}(X_s)+l_{\eta_k}(X_s)}}
    {{l_{\eta_i}(X_t)-l_{\eta_j}(X_t)+l_{\eta_k}(X_t)}}\\
    &=&\frac{i(\eta_i,\mu)s-i(\eta_i,\mu)s+i(\eta_k,\mu)s+f_i(s)-f_j(s)+f_k(s)}
    {i(\eta_i,\mu)t-i(\eta_i,\mu)t+i(\eta_k,\mu)t+f_i(s)-f_j(s)+f_k(s)}\\
    &=&\frac{f_i(s)-f_j(s)+f_k(s)}
    {f_i(t)-f_j(t)+f_k(t)}\\
    &\in& (1-\epsilon,1+\epsilon),\\
    \textup{and}\\
     &&\frac{{l_{\eta_i}(X_s)-l_{\eta_j}(X_s)+l_{\eta_k}(X_s)}}
    {{l_{\eta_i}(X_0)-l_{\eta_j}(X_0)+l_{\eta_k}(X_0)}}\\
    &\in& ((1-\epsilon)d_{ijk},(1+\epsilon)d_{ijk}).
 \end{eqnarray*}

 \vskip 5pt
 Therefore, for any $t\geq s\geq \max\{T_1,T_2,T_2',T_3,T_3'\}$,
 \begin{eqnarray}\label{eq:dist-st}
 \nonumber    {d}^{\Gamma}_h(X_s,X_t)&=&\frac{1}{2}(\sup_{\Delta_{ijk}\in \mathbb T_\Gamma}\log\frac{{l_{\eta_i}(X_t)-l_{\eta_j}(X_t)+l_{\eta_k}(X_t)}}
  {{l_{\eta_i}(X_s)-l_{\eta_j}(X_s)+l_{\eta_k}(X_s)}}\\
 \nonumber &&
  +\sup_{\Delta_{ijk}\in \mathbb T_\Gamma}\log\frac{{l_{\eta_i}(X_s)
  -l_{\eta_j}(X_s)+l_{\eta_k}(X_s)}}{{l_{\eta_i}
   (X_t)-l_{\eta_j}(X_t)+l_{\eta_k}(X_t)}})\\
  &\in& (\frac{1}{2}\log(1-\epsilon)\frac{t}{s},
  \frac{1}{2}\log(1+\epsilon)\frac{t}{s}+\frac{1}{2}\log(1+\epsilon)),
 \end{eqnarray}
 and
 \begin{eqnarray}\label{eq:dist-0s}
 \nonumber  {d}^{\Gamma}_h(X_s,X_0)&=&\frac{1}{2}(\sup_{\Delta_{ijk}\in \mathbb T_\Gamma}\log\frac{{l_{\eta_i}(X_s)-l_{\eta_j}(X_s)+l_{\eta_k}(X_s)}}
  {{l_{\eta_i}(X_0)-l_{\eta_j}(X_0)+l_{\eta_k}(X_0)}}\\
 \nonumber &&+\sup_{\Delta_{ijk}\in \mathbb T_\Gamma}\log\frac{{l_{\eta_i}(X_0)
  -l_{\eta_j}(X_0)+l_{\eta_k}(X_0)}}{{l_{\eta_i}
   (X_s)-l_{\eta_j}(X_s)+l_{\eta_k}(X_s)}})\\
  &\in&(\frac{1}{2}\log\frac{(1-\epsilon)s\bar  d}{(1+\epsilon)d},
  \frac{1}{2}\log\frac{(1+\epsilon)s\bar d}{(1-\epsilon)d}),
 \end{eqnarray}
 where $ d\triangleq \min\{d_{ijk}:i(\eta_i,\mu)-i(\eta_j,\mu)+i(\eta_k,\mu)=0,\ i(\eta_i,\mu)+i(\eta_j,\mu)+i(\eta_k,\mu)>0\}$, and $\bar d\triangleq \max\{\bar d_{ijk}:i(\eta_i,\mu)-i(\eta_j,\mu)+i(\eta_k,\mu)>0\}$.

 \vskip 5pt
 Now, reparametrize the earthquake ray $\{\mathcal E^t_\mu X\}_{t\geq 0}$ as $\{E^t_\mu X\}_{t\geq 0}$ by setting $E^t_\mu X=\mathcal E^{ (d/\bar d) \exp(2t)}_\mu X$. It follows from ~(\ref{eq:dist-st}) and ~(\ref{eq:dist-0s}) that for any $\epsilon>0$ there is a $T>0$ depending on $\epsilon$, $\Gamma$, $X$ and $\mu$ such that
 $$|d^\Gamma_h(X,E^s_\mu X)-s|< \epsilon,\text{ for any } s>T,$$
 and
 $$|d^\Gamma_h(E^t_\mu X,E^s_\mu X)-(t-s)|< \epsilon,\text{ for any }t\geq s>T.$$
 Therefore $\{E^t_\mu X\}_{t\geq 0}$ is an almost geodesic in $(T_{g,n},d^\Gamma_h)$.

\end{proof}

\vskip 10pt
\subsection{The horofunction boundary of $({T}_{g,n},d^\Gamma_{h})$}\label{sec:horo-boundary}

Let $(M,d)$ be a proper geodesic  metric space, which is endowed with the topology induced  by the metric $d$. We embed $(M,d)$ into $C(M)$, the space of continuous real-valued functions on $X$ endowed with the topology of uniform convergence on bounded sets,  by the map below:
\begin{eqnarray*}
  h:M&\longrightarrow &C(M)\\
  z&\longmapsto&[M\ni x \mapsto d(x,z)-d(b,z)],
\end{eqnarray*}
where $b\in M$ is a base point.
The {\it horofunction boundary} of $(M,d)$ is defined to be
$$
\partial{\overline{M_b}^{horo}}\triangleq \overline{h(M)}\backslash{h(M)},
$$
where $\overline{h(M)}$ represents the closure of $h(M)$ in $C(M)$.   The horofunction boundary is independent of the base point, i.e. $\partial{\overline{M_b}^{horo}}$ is homeomorphic to $\partial{\overline{M_{b'}}^{horo}}$ for $b,b'\in M$. A function in $\partial{\overline{M}^{horo}}$ is called a {\it horofunction}.

Rieffel (\cite{Rf}) observed   that every almost geodesic converges to a unique point in the horofunction boundary of $(X,d)$.  Therefore,  we have the following corollary.
\begin{corollary} \label{cor:convergence}
  Every (anti-) earthquake ray converges to a unique point in the horofunction boundary of $(T_{g,n},d^\Gamma_h)$.
\end{corollary}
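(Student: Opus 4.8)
The plan is to deduce the corollary directly from the Main Theorem together with Rieffel's observation recalled in \S\ref{sec:horo-boundary}. The only ingredients needed are: (i) that $(T_{g,n},d^\Gamma_h)$ satisfies the standing hypotheses of the horofunction construction, namely that it is a proper geodesic metric space; and (ii) that every earthquake ray, after the reparametrization built in the proof of the Main Theorem, is an almost-geodesic. Item (ii) is precisely the content of the Main Theorem, so the first thing I would do is record (i). Since $\Pi\circ\Lambda$ identifies $T_{g,n}$ with the interior of the bounded convex polyhedron $P$, and the Hilbert metric on the interior of a bounded convex domain is a complete, proper, geodesic metric space whose geodesics contain the straight-line segments, the hypotheses needed for the map $h$ and for Rieffel's statement are met.

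Next I would apply the two results in sequence. Fix an earthquake ray $\{\mathcal E^t_\mu X\}_{t\ge 0}$ and let $\{E^t_\mu X\}_{t\ge 0}$ be its reparametrization from the proof of the Main Theorem, with $E^t_\mu X=\mathcal E^{(d/\bar d)\exp(2t)}_\mu X$. By the Main Theorem this is an almost-geodesic, and by Rieffel's observation (\cite{Rf}) every almost-geodesic converges to a unique point of the horofunction boundary $\partial\overline{T_{g,n}}^{horo}$; denote this limit by $\xi$. This already yields convergence of the reparametrized ray to a unique horofunction.

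The one point requiring a short argument is that passing back to the original parametrization does not change the limit. I would observe that $t\mapsto (d/\bar d)\exp(2t)$ is a strictly increasing homeomorphism of $[0,\infty)$ onto $[d/\bar d,\infty)$, so the two families $\{E^t_\mu X\}_{t\ge 0}$ and $\{\mathcal E^s_\mu X\}_{s\ge 0}$ share the same tail as subsets of $T_{g,n}$ and are cofinal in one another. Since the horofunction limit of a path depends only on the behaviour of $h(\cdot)$ along the points traversed as they leave every bounded set, the original earthquake ray converges to the same horofunction $\xi$. The anti-earthquake case is identical, by the symmetry already invoked in the proof of the Main Theorem. There is essentially no remaining obstacle here: the corollary is a formal consequence of the Main Theorem and Rieffel's result, and the only care needed is this reparametrization-invariance observation together with the verification that $(T_{g,n},d^\Gamma_h)$ is a proper geodesic space.
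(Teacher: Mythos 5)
Your proposal is correct and follows essentially the same route as the paper, which deduces the corollary directly from the Main Theorem together with Rieffel's observation that every almost-geodesic converges to a unique point of the horofunction boundary. The two points you spell out explicitly --- that $(T_{g,n},d^\Gamma_h)$ is a proper geodesic metric space because it is the Hilbert metric on the interior of a bounded convex polyhedron, and that the monotone reparametrization $t\mapsto (d/\bar d)\exp(2t)$ does not affect the horofunction limit --- are left implicit in the paper but are exactly the right details to check.
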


\remark  As we mentioned in the beginning of this section,  Bonahon proved  \cite{Bon} that each (anti-) earthquake ray converges to a unique point in the Thurston boundary $\partial T_{g,n}$. It follows from Theorem \ref{thm:Teich} that each (anti-) earthquake ray converges to a point in the Euclidean boundary of the polytope $P$. In general, let  $D$ be a bounded convex domain  in the Euclidean space. Foertsch-Karlsson (\cite{FK}) proved that every geodesic under the Hilbert metric converges to a boundary point in $\partial D$ in the Euclidean sense. Walsh (\cite{Walsh2}) proved that  every sequence converging  to a point in the horofunction boundary of the Hilbert geometry converges to a point in the Euclidean boundary $\partial D$. Hence every almost geodesic  under the Hilbert metric converges to a boundary point in $\partial D$.

\section{Dependence of $d^{\Gamma}_{h}$ on the preferred triangulation $\Gamma$} \label{sec:triangulation}
In this section, we investigate the dependence of the Hilbert metric $d^{\Gamma}_h$ on the choice of triangulation $\Gamma$.  First of all, we define a basic operation for a triangulation, namely, \textit{diagonal-flip}.  


\begin{figure}
  \subfigure[]
  {
  \begin{minipage}[tbp]{50mm}
  \includegraphics[width=50mm]{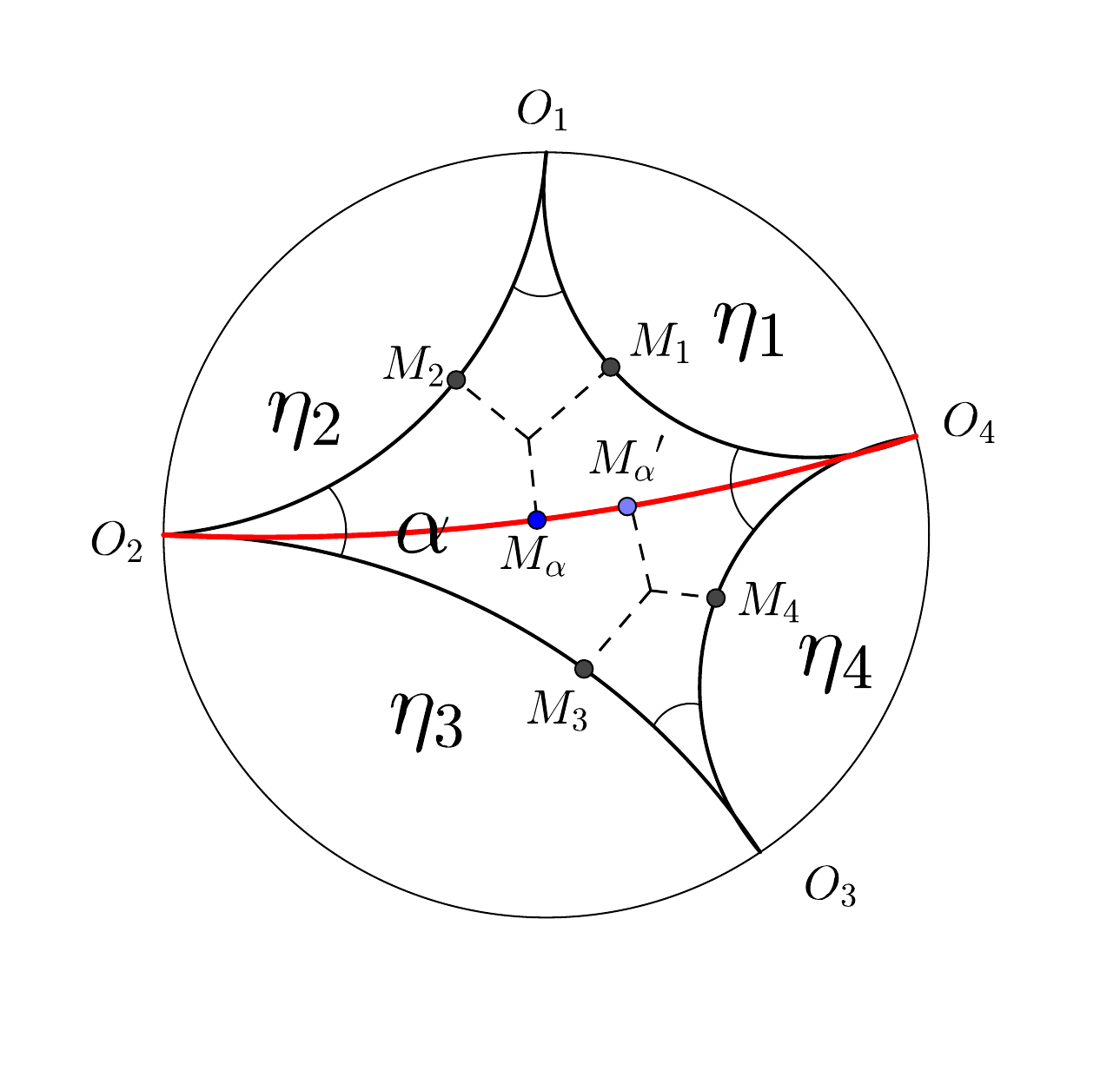}
  \label{fig:flip-1}
  \end{minipage}
  }
   \begin{minipage}[tbp]{15mm}
    \begin{tikzpicture}
      \draw [->](-0.5,0)--(0.5,0)
         node[pos=0.5,above] {diagonal-flip};
    \end{tikzpicture}
    \end{minipage}
  \subfigure[]
  {
  \begin{minipage}[tbp]{50mm}
  \includegraphics[width=50mm]{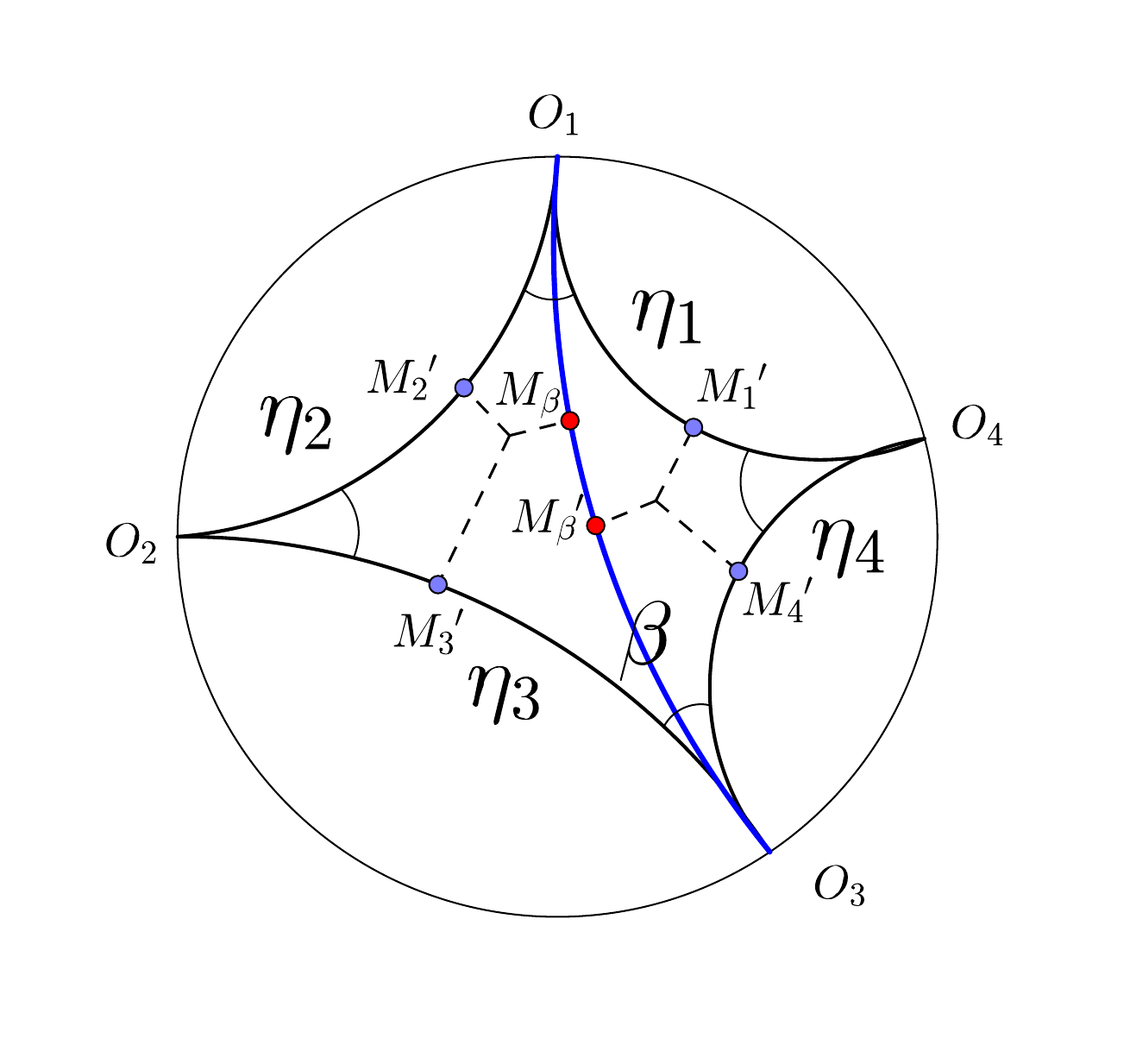}
  \label{fig:flip-2}
  \end{minipage}
  }

  \caption{Flip}
  \label{fig:flip}
\end{figure}

\begin{figure}

  \subfigure[]
  {
  \begin{minipage}[tbp]{50mm}
  \includegraphics[width=50mm]{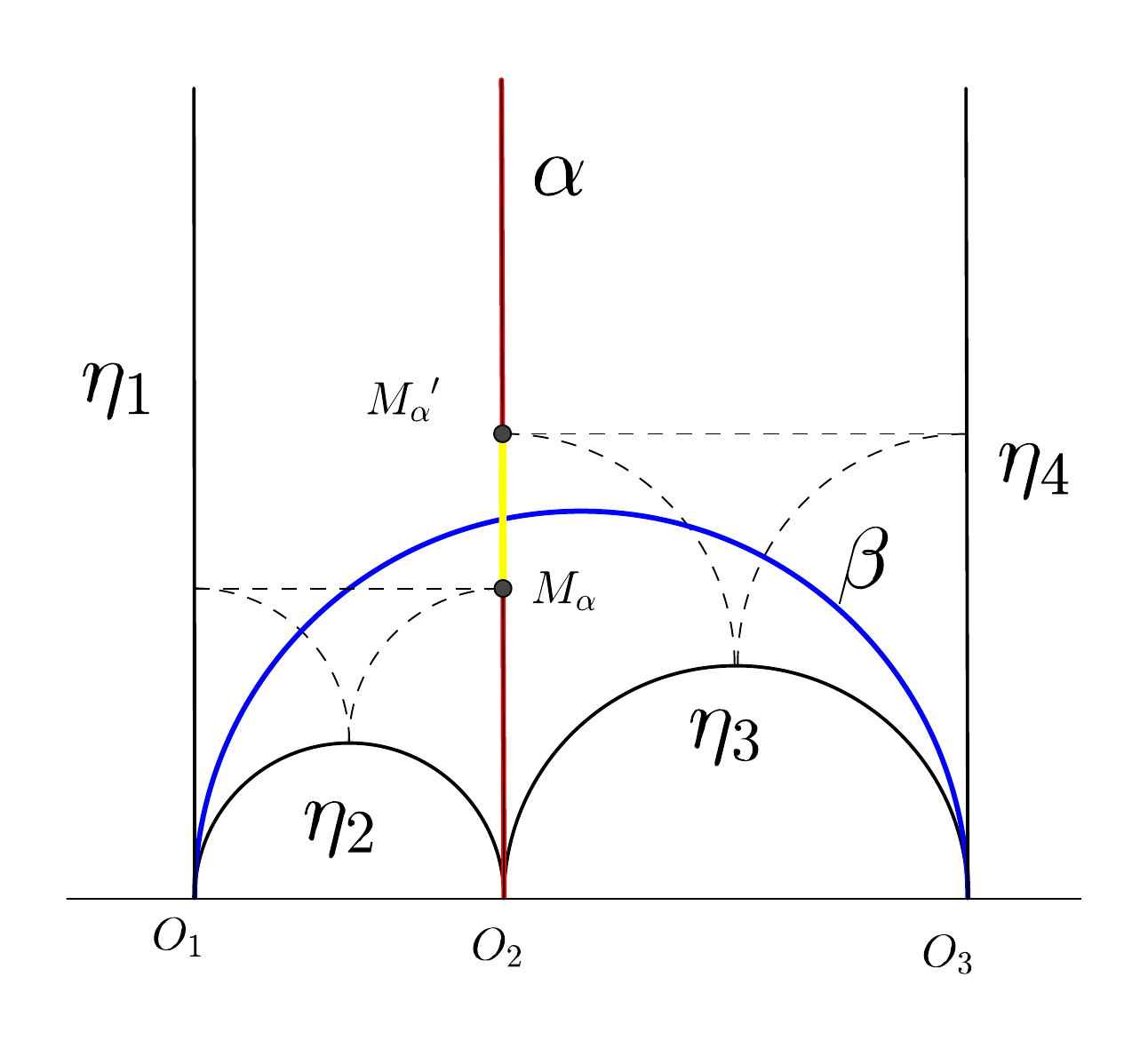}
  \label{fig:shearing-flip-1}
  \end{minipage}
  }
   \begin{minipage}[tbp]{15mm}
    \begin{tikzpicture}
      \draw [->](-0.5,0)--(0.5,0)
         node[pos=0.5,above] {$z\mapsto \frac{-z}{z-1-R}$ };
    \end{tikzpicture}
    \end{minipage}
  \subfigure[]
  {
  \begin{minipage}[tbp]{50mm}
  \includegraphics[width=50mm]{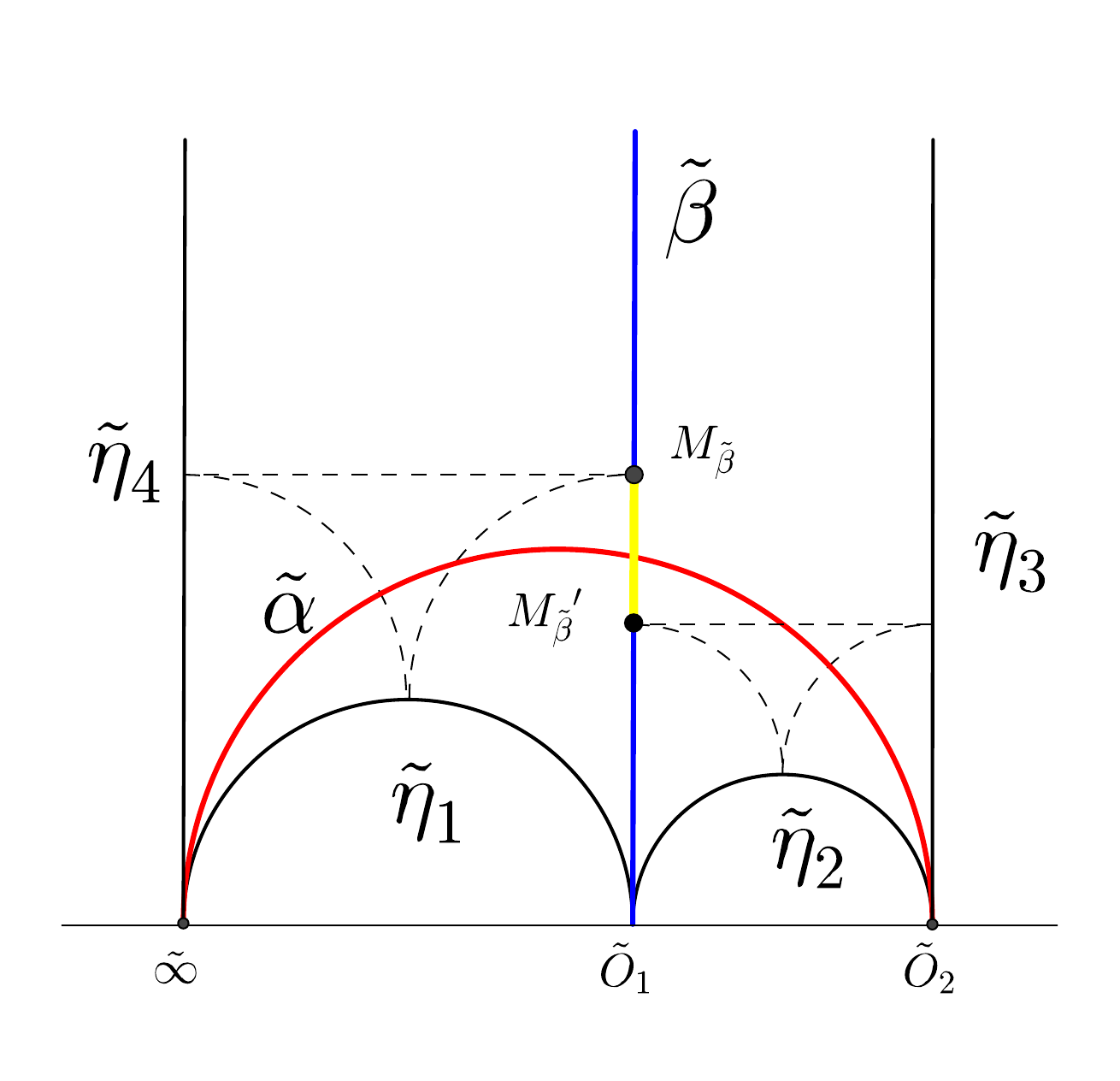}
  \label{fig:shearing-flip-2}
  \end{minipage}
  }

  \caption{Shearing}
  \label{fig:shearing}
\end{figure}

 Let $ Q$ be an ideal quadrilateral with  sides $\eta_1,\eta_2,\eta_3,\eta_4$ and  ideal vertices (corners) $O_1,O_2,O_3,O_4$. Let $H_i$ be a horocycle around the corner $O_i$ whose length is smaller than $e^{-\rho_0}$, $i=1,2,3,4$, where $\rho_0$ is chosen in Section \ref{sec:geometri para}. Let $\alpha$  be a diagonal geodesic connecting $O_2,O_4$ which triangulates $\mathcal Q$ into two triangles $\Delta_1$ and $\Delta_2$, and $M_1,M_2,M_3,M_4$  the ``midpoint'' of $\eta_1,\eta_2,\eta_3,\eta_4$ with respect to these two triangles. Further, let $M_{\alpha}$ and $M_{\alpha}'$ be the midpoints of $\alpha$  with respect to $\Delta_1$ and $\Delta_2$ respectively. Denote by $l_{i1}$ and $l_{i2}$ the distances from $M_i$  to $H_{i-1}$ and from $M_i$  to $H_{i}$ respectively.  Let $\beta$ be  the other diagonal geodesic, and the notations $M'_1,M'_2,M'_3,M'_4$, $M_{\beta},M_{\beta}'$, $l'_{i1}$ and $l'_{i2}$ are defined similarly. (see Fig.~\ref{fig:flip}.) 

 \begin{defi}
   With notations described above, we say that $\beta$ is obtained from $\alpha$ by a {\it diagonal-flip}  with respect to $Q$. A triangulation $\Gamma'$ is said to be obtained from $\Gamma$ by a diagonal-flip if $\Gamma'\backslash \{\alpha'\}=\Gamma\backslash\{\alpha\}$ and that $\alpha'$ can be obtained from $\alpha$ by a diagonal-flip with respect to some quadrilateral whose four sides are contained in $\Gamma'\backslash \{\alpha'\}=\Gamma\backslash\{\alpha\}$.
 \end{defi}

 \begin{defi}
   We define the \textit{shearing of $Q$ with respect to $\alpha$}, denoted by $\textup{shr}(\alpha,Q)$, in the following way. The absolute value of $\textup{shr}(\alpha,Q)$ is defined to be the distance between $M_\alpha$ and $M_\alpha'$, and the sign of $\textup{shr}(\alpha,Q)$ is defined to ``+'' if $M_\alpha'$ sits on the left side of $M_\alpha$ observed from $\Delta_1$, otherwise the sign is defined to be ``-''.
 \end{defi}
 \remark The ``left'' notation in the definition of the shearing depends only on the orientation of the ideal quadrilateral.

 The lemma below describes some basic properties of a diagonal-flip.
 \begin{lemma}\label{lem:flip}
 \begin{enumerate}
   \item 
       For the shearing of the ideal quadrilateral $Q$ along $\beta$, $\textup {shr}(\beta;Q)=-\textup{shr}(\alpha;Q)$.
   \item Suppose $l_{11}\geq l_{42}$, then $\textup{shr}(\alpha;Q)>0$ and
    \begin{eqnarray*}
    |l_{i1}'-l_{i1}+\textup{shr}(\alpha;Q)|\leq \log2,&
    |l_{i2}'-l_{i1}-\textup{shr}(\alpha;Q)|\leq \log2,&\ i=1,3;\\
    |l_{i1}'-l_{i1}|\leq \log 2,&\ |l_{i2}'-l_{i2}|\leq \log 2,&\ i=2,4.
    \end{eqnarray*}
 \end{enumerate}
\end{lemma}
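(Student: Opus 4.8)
The plan is to analyze the geometry of the ideal quadrilateral $Q$ directly in the upper half-plane model, using the explicit coordinates supplied by Figure~\ref{fig:shearing} and exploiting the fact that all the quantities $l_{i1}, l_{i2}, l'_{i1}, l'_{i2}$ and $\textup{shr}(\alpha;Q)$ are signed distances measured along horocyclic foliations near the four cusps. First I would establish part (1) as essentially a symmetry statement: the two diagonals $\alpha$ and $\beta$ of $Q$ determine midpoints $M_\alpha, M_\alpha'$ and $M_\beta, M_\beta'$, and by reflecting the picture across the common perpendicular (or equivalently by swapping the roles of the triangles $\Delta_1,\Delta_2$ and of the two diagonals), the midpoint of $\beta$ gets displaced by exactly the opposite amount. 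The orientation convention for the sign of $\textup{shr}$ then forces $\textup{shr}(\beta;Q)=-\textup{shr}(\alpha;Q)$; the content here is bookkeeping of the ``left side observed from $\Delta_1$'' convention, not analysis.

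For part (2) I would normalize $Q$ in the upper half-plane so that the relevant ideal vertices sit at convenient points (say $0,1,\infty$ and a fourth point determined by the shear parameter $R$), exactly matching the Möbius map $z\mapsto \frac{-z}{z-1-R}$ indicated in Figure~\ref{fig:shearing}. The key computation is to express each midpoint $M_i$ and each partial length $l_{i1}, l_{i2}$ as an explicit function of the vertex positions and the horocycle heights, and then compare the lengths cut out on $\eta_i$ by the triangulation using $\alpha$ versus the triangulation using $\beta$. Because midpoints are defined via the horocyclic foliation of ideal triangles, the difference between $l'_{i1}$ and $l_{i1}$ (resp. $l'_{i2}$ and $l_{i2}$) is governed by how far the midpoint $M_i$ moves when the opposite diagonal is flipped; for the two sides $\eta_1,\eta_3$ adjacent to the moving diagonal endpoints this displacement equals $\textup{shr}(\alpha;Q)$ up to a bounded error, while for $\eta_2,\eta_4$ (the sides meeting the endpoints $O_2,O_4$ of $\alpha$) the displacement is bounded outright.

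The main obstacle, and the source of the additive $\log 2$, will be that the ``midpoint'' $M_i$ of an ideal triangle is not an intrinsic point of the bi-infinite geodesic $\eta_i$ but depends on the triangle it bounds, and the two horocycles bounding the central region of an ideal triangle have a fixed length ratio. I expect the $\log 2$ term to arise precisely from the comparison of the foliating horocycles in an ideal triangle: the midpoint is where three equal-length horocyclic arcs meet, and standard hyperbolic trigonometry in an ideal triangle (the horocycle of length $\ell$ opposite a cusp, versus the penetration depth into the cusp) produces logarithmic discrepancies of size at most $\log 2$ when one reglues along the flipped diagonal. So the plan is: reduce everything to the configuration of four cusps and their horocycles, write the shear as the displacement of the midpoint of the diagonal, and then bound the midpoint-movement on each side $\eta_i$ by comparing against $\textup{shr}(\alpha;Q)$, absorbing the bounded horocyclic correction into the $\log 2$.

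The hypothesis $l_{11}\geq l_{42}$ is what pins down the sign $\textup{shr}(\alpha;Q)>0$: it says the configuration is sheared in the positive direction, so I would verify this sign assertion first (it follows once the normalized coordinates are in place, since $l_{11}-l_{42}$ has the same sign as the shear parameter), and then the distance estimates follow from the explicit formulas together with the elementary inequality $|\log(1+x)|\le \log 2$ on the relevant range. The overall strategy is concrete coordinate geometry in $\mathbb H^2$ rather than any abstract argument, and the only genuinely delicate point is tracking which midpoint displacements pick up the full shear and which remain bounded.
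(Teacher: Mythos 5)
Your plan for part (2) is essentially the paper's proof: normalize $Q$ in the upper half-plane with vertices at $0,1,1+R,\infty$, use the M\"obius map $z\mapsto \frac{-z}{z-1-R}$ to put the flipped diagonal in standard position, read off the midpoints explicitly, and observe that $l_{11}\geq l_{42}$ forces $R\geq 1$ so that $\textup{shr}(\alpha;Q)=\log R\geq 0$. The $\log 2$ in the paper arises exactly where you predict, from the horocycle comparison $l_{42}'=l_{42}+\log\frac{R}{1+R}$ with $\frac{R}{1+R}\in[\tfrac12,1)$ when $R\geq 1$, after which the remaining bounds follow from the length identities $l_{41}+l_{42}=l'_{41}+l'_{42}$, $l_{12}=l_{21}$, etc. So on part (2) you and the paper are doing the same computation.

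The gap is in your part (1). You propose to get $\textup{shr}(\beta;Q)=-\textup{shr}(\alpha;Q)$ "by symmetry," reflecting across the common perpendicular of the two diagonals or "swapping the roles" of the triangles and diagonals. A generic ideal quadrilateral admits \emph{no} isometry interchanging its two diagonals: the reflection across the common perpendicular of $\alpha$ and $\beta$ does preserve the vertex set, but it swaps the endpoints of each diagonal and hence fixes each of $\alpha$ and $\beta$ setwise; it tells you nothing about how $\textup{shr}(\alpha;Q)$ compares to $\textup{shr}(\beta;Q)$. (An isometry genuinely swapping the diagonals exists only in the harmonic case $R=1$, where both shears vanish anyway.) The identity is really the cross-ratio inversion under a diagonal flip, and it has to be computed: in the paper's normalization the shear along $\alpha$ is $\log R$, and after applying $z\mapsto\frac{-z}{z-1-R}$ (which sends $O_1,O_2,O_3,\infty$ to $0,1/R,\infty,-1$) the shear along the image of $\beta$ is read off as $\log(1/R)$. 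Since you are setting up exactly these coordinates for part (2) anyway, the fix is cheap — just replace the symmetry claim with this two-line computation — but as written the symmetry argument does not establish part (1).
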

\begin{proof}
    Here we adopt the upper half-plane model for the hyperbolic geometry (see Fig.~\ref{fig:shearing}).
    Since the map $z\mapsto kz$ is an isometry of the hyperbolic metric, we suppose that the Euclidean coordinates of  $O_1,O_2,O_3$ are $(0,0),(1,0),(0,1+R)$ respectively.

    (1) The shearing of $Q$ along $\alpha$ is $\log R$. To calculate the shearing of $Q$ along $\beta$, we perform the fractional linear map $f:z\mapsto\frac{-z}{z-1-R}$ on the upper half plane which sends $O_1,O_2,O_3,\infty$ to $(0,0),(1/R,0),\infty,-1$ respectively . 
  It is clear in this case that the shearing of $f(Q)$ along $\tilde \beta=f(\beta)$ is $\log (1/R)$, which means the shearing of $Q$ along $\beta$ is also $\log (1/R)$ since $f$ is an orientation preserving isometry.

  (2) 
  By assumption $l_{11}\geq l_{42}$, hence $R\geq 1$.
  It is clear that
  $l_{41}+l_{42}=l_{41}'+l_{42}'$. The coordinates of $M_4,M_4'$ are $(R+1,R)$ and $(1+R,1+R)$ respectively. In addition, the coordinate of the intersection point between $\eta_4$ and $H_4$ is $(1+R,e^{l_{42}}R)$. Therefore $l_{42}'=\log {\frac{e^{l_{42}}R}{1+R}}\in [l_{42}-\log 2,l_{42}]$, hence $l_{41}'\in [l_{41},l_{41}+\log 2]$. Similarly we get $l_{22}'\in [l_{22}-\log2,l_{22}]$ and $l_{21}'\in [l_{21},l_{22}+\log2]$.

  For the remaining inequalities, note that $l_{12}=l_{21}$ and $l_{11}+l_{12}=l'_{11}+l'_{42}$, hence $l_{12}'=l_{21}'-\textup{shr}(\beta;Q)\in[l_{12}+\textup{shr}(\alpha;Q),l_{12}
  +\textup{shr}(\alpha;Q)+\log 2]$, and $l_{11}'\in[l_{11}-\textup{shr}(\alpha;Q)-\log 2,l_{11}
  -\textup{shr}(\alpha;Q)]$. Similarly we get $l_{32}'\in[l_{32}+\textup{shr}(\alpha;Q),l_{32}
  +\textup{shr}(\alpha;Q)+\log 2]$, and $l_{31}'\in[l_{31}-\textup{shr}(\alpha;Q)-\log 2,l_{31}
  -\textup{shr}(\alpha;Q)]$.
\end{proof}

The estimates from the lemma above describe a close relationship between the changes of the lengths of preferred arcs and the shearing along each simple geodesic $\eta_i$ after a diagonal-flip operation. This relationship provides a clue for investigating the effect of  the triangulation $\Gamma$ on the Hilbert metric $d^{\Gamma}_h$, i.e. investigating the relationship between $d^{\Gamma}_h(X,Y)$ and $d^{\Gamma'}_h(X,Y)$ for two different triangulations $\Gamma$, $\Gamma'$. If we fix a point $X_0\in T_{g,n}$, the sphere $B^{\Gamma}(X_0,R)$ centered at $X_0$ of radius $R$ with respect to $d^\Gamma_h$ is an almost-sphere, i.e. a sphere up to an additive constant, with respect to $d^{\Gamma'}_h$,  where $\Gamma'$ can be obtained from $\Gamma$ by a diagonal-flip.

\begin{proposition}\label{prop:diagonal-sphere}
  Fix $X_0\in T_{g,n}$. Let $\Gamma,\Gamma'$ be two preferred triangulations of $S_{g,n}$ such that one can be obtained from the other by a diagonal-flip, then there is a constant $C_{\Gamma,\Gamma',X_0,\rho_0}$ depending on $\Gamma, \Gamma, X_0, \rho_0$  such that
  $$
  |d^{\Gamma}_h(X_0,X)- d^{\Gamma'}_h(X_0,X)|\leq C_{\Gamma,\Gamma',X_0,\rho_0},
  \text{ for any } X\in T_{g,n}.
  $$
\end{proposition}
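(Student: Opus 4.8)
My plan is to read the distance off the second expression in Proposition~\ref{prop:Hilbert}, which involves only the half-arc lengths. Write $\mathcal H_\Gamma$ for the finite collection of half-arcs $\eta^{ijk}_{\cdot 1}$ appearing there, and for a fixed base point $X_0$ set $u_h=\log\!\big(l_h(X)/l_h(X_0)\big)$. That form of Proposition~\ref{prop:Hilbert} then reads
\[
d^\Gamma_h(X_0,X)=\tfrac12\Big(\max_{h\in\mathcal H_\Gamma}u_h-\min_{h\in\mathcal H_\Gamma}u_h\Big),
\]
so $d^\Gamma_h(X_0,X)$ is half the range of the numbers $u_h$. Since $\Gamma'$ is obtained from $\Gamma$ by flipping the single diagonal $\alpha$ of one ideal quadrilateral $Q$, the two triangulations share every ideal triangle outside $Q$; hence $\mathcal H_\Gamma$ and $\mathcal H_{\Gamma'}$ share all half-arcs except those belonging to the two triangles of $Q$, namely the (at most) eight halves of the four sides $\eta_1,\dots,\eta_4$, and on every shared half-arc $u_h=u'_h$ identically. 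Thus the problem reduces to bounding $|\max u-\max u'|$ and $|\min u-\min u'|$, each by a constant depending only on $\Gamma,\Gamma',X_0,\rho_0$.

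The two ingredients that make this work are: (i) for each side $\eta_i$ of $Q$ the total compact length $l_{\eta_i}(X)=l_{i1}(X)+l_{i2}(X)=l'_{i1}(X)+l'_{i2}(X)$ is \emph{independent of the triangulation}, and every half-arc is trapped in $[\rho_0,l_{\eta_i}(X)]$ by (\ref{eq:rho}); and (ii) Lemma~\ref{lem:flip} together with the tangency identities (\ref{eq:relation}) (at a corner of an ideal triangle the two incident half-arcs are equal) pin the flipped half-arcs to honest half-arcs of the other triangulation up to an additive error $\le\log2$. Concretely, each flipped half-arc $l'_{h'}$ is a half of some side $\eta_i$, so at once $l'_{h'}(X)\le l_{\eta_i}(X)\le 2\max\{l_{i1}(X),l_{i2}(X)\}$; and, running Lemma~\ref{lem:flip} in whichever sign regime $\textup{shr}(\alpha;Q)(X)$ falls and rewriting the shifted half through (\ref{eq:relation}), one gets $l'_{h'}(X)\ge l_{h}(X)-\log2$ for a suitable genuine $\Gamma$-half-arc $h$ (chosen according to the sign of the shearing).

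I would then convert these additive length estimates into multiplicative estimates on the $u$'s using the uniform floor $\rho_0$: if $a,b\ge\rho_0$ and $b\ge a-\log2$, then $b\ge e^{-C_0}a$ with $C_0=\log\frac{\rho_0+\log2}{\rho_0}$. Feeding the two bounds of the previous paragraph through this, and using that all $X_0$-values $l_h(X_0),l'_{h'}(X_0),l_{\eta_i}(X_0)$ are fixed positive constants, gives $u'_{h'}\le\max_{\mathcal H_\Gamma}u+C(X_0)$ (from the upper trap) and $u'_{h'}\ge\min_{\mathcal H_\Gamma}u-C(X_0)$ (from the lower bound), for every flipped $h'$. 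The symmetric statements, with the roles of $\Gamma$ and $\Gamma'$ exchanged, give $u_h\le\max u'+C(X_0)$ and $u_h\ge\min u'-C(X_0)$. Hence $|\max u-\max u'|\le C(X_0)$ and $|\min u-\min u'|\le C(X_0)$, and therefore $|d^\Gamma_h(X_0,X)-d^{\Gamma'}_h(X_0,X)|\le C_{\Gamma,\Gamma',X_0,\rho_0}$ for every $X$.

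The step I expect to be the real obstacle is the control of the flipped half-arcs, precisely because the shearing $\textup{shr}(\alpha;Q)(X)$ is \emph{unbounded} as $X$ ranges over $T_{g,n}$: a flipped half-arc is therefore \emph{not} within a bounded factor of ``the same'' half-arc of the other triangulation, so a naive term-by-term comparison fails. The way out is the asymmetry between the two directions — the upper bound comes for free from the triangulation-independent total length $l_{\eta_i}(X)$, whereas the lower bound must use Lemma~\ref{lem:flip} and (\ref{eq:relation}) to recognise each small flipped half as (nearly) an honest half-arc of the complementary triangle. A secondary nuisance is that the sign of the shearing (equivalently, which of $l_{11},l_{42}$ is larger in Lemma~\ref{lem:flip}) varies with $X$, so the matching half-arc must be chosen pointwise in $X$; this is harmless since the final estimate only compares extrema. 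Finally, this analysis makes transparent why the constant must depend on $X_0$: the shearing at the fixed base point can be arbitrarily large, and it enters only through the frozen numbers $l_h(X_0)$ and $l'_{h'}(X_0)$.
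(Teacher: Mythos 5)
Your argument is correct in outline, but it is a genuinely different route from the paper's. The paper does \emph{not} use Lemma~\ref{lem:flip} to prove Proposition~\ref{prop:diagonal-sphere}: it first establishes Proposition~\ref{prop:O-dist}, the coarse formula $d^\Gamma_h(X_0,X)=\tfrac12\sup_{\eta\in\Gamma}\log l_\eta(X)+O_{\Gamma,X_0,\rho_0}(1)$ (upper bound from the floor $2\rho_0$ on the alternating sums, lower bound from the identity~(\ref{eq:sum})), and then notes via~(\ref{eq:ob}) that $\sup_{\eta\in\Gamma}l_\eta(X)$ and $\sup_{\eta\in\Gamma'}l_\eta(X)$ each agree to within a factor $2$ with the supremum over the common arcs $\Gamma\setminus\{\alpha\}=\Gamma'\setminus\{\alpha'\}$, so the proposition is immediate. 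Your proof instead compares the two half-arc formulas term by term through the flip estimates; this works, but it is finer and heavier: you must choose the matching half-arc pointwise in $X$ according to the sign of the shearing, you must use the fact (a consequence of~(\ref{eq:relation}) and the definition of the shearing) that the two halves of $\alpha$ at a common corner, taken with respect to $\Delta_1$ and $\Delta_2$, differ exactly by $\textup{shr}(\alpha;Q)$ in order to recognise the shrunken flipped halves $l'_{i1}\approx l_{i1}-\textup{shr}(\alpha;Q)$ as honest $\Gamma$-half-arcs, and you should note that the halves of the new diagonal $\beta$ — which your phrase ``each flipped half-arc is a half of some side $\eta_i$'' overlooks — are themselves equal in length to halves of the sides by~(\ref{eq:relation}) and so contribute nothing new. (You also implicitly need the corrected form of Lemma~\ref{lem:flip}(2), namely $|l'_{i2}-l_{i2}-\textup{shr}(\alpha;Q)|\le\log 2$, which is what its proof actually establishes.) The trade-off: the paper's route is much shorter and yields the independently useful statement that the distance from a fixed basepoint is governed by the single quantity $\sup_{\eta}\log l_\eta(X)$; your route stays closer to the geometry of the flip and makes visible exactly which half-arcs are responsible for the discrepancy. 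Both correctly produce a constant that must depend on $X_0$.
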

\begin{proof}
  Set $\Gamma=\{\alpha,\eta_1,...,\eta_{6g-6+2n}\}$, $\Gamma'=\{\alpha',\eta_1,...,\eta_{6g-6+2n}\}$.
  First, we prove an inequality which holds for any preferred triangulation $\Gamma$,
  \begin{equation}\label{eq:ob}
    \sup_{\eta\in \Gamma\backslash \alpha}l_{\eta}(X)
  \leq
  \sup_{\eta\in \Gamma}l_{\eta}(X)
  \leq
  2\sup_{\eta\in \Gamma\backslash \alpha}l_{\eta}(X),\text{ for any } X\in T_{g,n}.
  \end{equation}
  Indeed, there are two simple geodesics $\eta_i,\eta_j\in \Gamma$ such that  $\eta_i$, $\eta_j$, $\alpha$ bound an ideal triangle.
 Recall that
  $${l_{\eta_j}(X)-l_{\alpha}(X)+l_{\eta_i}(X)}\geq2\rho_0>0,$$
  then
  $$l_{\alpha}(X)\leq 2\max\{l_{\eta_i}(X),l_{\eta_j}(X)\}.$$
  Hence
  $$ \sup_{\eta\in \Gamma}l_{\eta}(X)
  \leq
  2\sup_{\eta\in \Gamma\backslash \alpha}l_{\eta}(X).$$
  The first inequality  is obvious. Now the proposition follows immediately from Proposition~\ref{prop:O-dist} and  (\ref{eq:ob}).
\end{proof}
\begin{proposition}\label{prop:O-dist}
  Fix $X_0\in T_{g,n}$, then there is a constant $C_{\Gamma,X_0,\rho_0}$ depending on $\Gamma,X_0,$ and $\rho_0$  such that
  $$ |d^{\Gamma}_h(X_0,X)-\frac{1}{2} \sup_{\eta\in\Gamma}\log l_{\eta}(X)|\leq C_{\Gamma,X_0,\rho_0},\text{ for any } X\in T_{g,n}$$
\end{proposition}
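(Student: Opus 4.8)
The goal is to show that the Hilbert distance $d^{\Gamma}_h(X_0,X)$ is, up to a bounded error independent of $X$, equal to $\tfrac{1}{2}\sup_{\eta\in\Gamma}\log l_\eta(X)$. The plan is to start from the expression for $d^\Gamma_h$ in Proposition~\ref{prop:Hilbert}, namely
\begin{equation*}
d^{\Gamma}_h(X_0,X)=\frac{1}{2}\sup_{\Delta_{ijk}}\log\frac{L_{ijk}(X_0)}{L_{ijk}(X)}+\frac{1}{2}\sup_{\Delta_{ijk}}\log\frac{L_{ijk}(X)}{L_{ijk}(X_0)},
\end{equation*}
where I abbreviate $L_{ijk}(Y)=l_{\eta_i}(Y)-l_{\eta_j}(Y)+l_{\eta_k}(Y)$. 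Because $X_0$ is fixed, each quantity $L_{ijk}(X_0)$ lies in a fixed compact interval $[2\rho_0,\,C_0]$ for some constant $C_0=C_0(\Gamma,X_0)$. Hence the first supremum, $\tfrac12\sup_{\Delta_{ijk}}\log\frac{L_{ijk}(X_0)}{L_{ijk}(X)}$, is controlled: since $L_{ijk}(X)\ge 2\rho_0$ always, this term is bounded above by $\tfrac12\log\frac{C_0}{2\rho_0}$ and below by something of the form $\tfrac12\log\frac{2\rho_0}{\sup_\eta 2l_\eta(X)+\cdots}$. The point is that this entire first summand differs from $-\tfrac12\inf_{\Delta_{ijk}}\log L_{ijk}(X)$ by only a bounded amount, and in fact I expect it to be absorbable into the additive constant once I argue the second summand carries the main growth.

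The heart of the argument is the second supremum $\tfrac12\sup_{\Delta_{ijk}}\log\frac{L_{ijk}(X)}{L_{ijk}(X_0)}$. Since $L_{ijk}(X_0)$ is pinned in $[2\rho_0,C_0]$, this term equals $\tfrac12\sup_{\Delta_{ijk}}\log L_{ijk}(X)$ up to the bounded error $\tfrac12\log C_0$ (above) and $\tfrac12\log(2\rho_0)$ (below). So it remains to compare $\sup_{\Delta_{ijk}}\log L_{ijk}(X)$ with $\sup_{\eta\in\Gamma}\log l_\eta(X)$. For the upper bound, each $L_{ijk}(X)\le l_{\eta_i}(X)+l_{\eta_k}(X)\le 2\sup_{\eta}l_\eta(X)$, so $\sup_{\Delta}\log L_{ijk}(X)\le \sup_\eta\log l_\eta(X)+\log 2$. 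For the lower bound I use the relation from~(\ref{eq:relation}): for any triangle $\Delta_{ijk}$ the three quantities $L_{ijk}(X)$, $L_{jki}(X)$, $L_{kij}(X)$ equal $2l_{\eta^{ijk}_{i1}}(X)$, $2l_{\eta^{ijk}_{j1}}(X)$, $2l_{\eta^{ijk}_{k1}}(X)$, and these three subarc lengths sum to a quantity comparable to $l_{\eta_i}(X)+l_{\eta_j}(X)+l_{\eta_k}(X)$; hence the largest of the $L$-values over the three cyclic permutations is at least a definite fraction of $\max\{l_{\eta_i},l_{\eta_j},l_{\eta_k}\}$. Taking the geodesic $\eta$ realizing $\sup_\eta l_\eta(X)$ and a triangle having $\eta$ as a side, this shows $\sup_{\Delta}\log L_{ijk}(X)\ge \sup_\eta\log l_\eta(X)-O(1)$.

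Combining these, the second summand equals $\tfrac12\sup_{\eta\in\Gamma}\log l_\eta(X)$ up to an additive constant depending only on $\Gamma,X_0,\rho_0$, while the first summand is itself bounded by such a constant (observe that as $\sup_\eta l_\eta(X)\to\infty$ the first summand stays bounded because every $L_{ijk}(X)\ge 2\rho_0$ forces the ratio $L_{ijk}(X_0)/L_{ijk}(X)\le C_0/(2\rho_0)$). Absorbing every $O(1)$ contribution into a single constant $C_{\Gamma,X_0,\rho_0}$ yields the claimed inequality. The main obstacle I anticipate is the lower bound in the second paragraph: I must rule out the degenerate possibility that the geodesic $\eta$ achieving the supremum of $l_\eta(X)$ is precisely the ``subtracted'' side $\eta_j$ in every triangle containing it, which would a priori make each $L_{ijk}(X)$ small despite $l_{\eta_j}(X)$ being large. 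Using the subarc identities~(\ref{eq:relation}) together with the fact that each $\eta_j$ appears in two distinct triangles (playing different cyclic roles) should resolve this, since in at least one of the three cyclic orderings $\eta_j$ enters with a $+$ sign, guaranteeing some $L$-value comparable to $l_{\eta_j}(X)$.
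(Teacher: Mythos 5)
Your overall route is the same as the paper's: both arguments start from the expression in Proposition~\ref{prop:Hilbert}, use $2\rho_0\le L_{ijk}(Y)\le 2\sup_{\eta}l_{\eta}(Y)$ (where $L_{ijk}(Y)=l_{\eta_i}(Y)-l_{\eta_j}(Y)+l_{\eta_k}(Y)$), and then invoke a linear identity among the cyclic rotations of $L_{ijk}$ to show that $\sup_{\Delta}L_{\Delta}(X)$ is comparable to $\sup_{\eta}l_{\eta}(X)$. The paper uses the two--term identity~(\ref{eq:sum}), $2l_{\eta_i}=L_{ijk}+L_{jki}$, so that one of the two terms is at least $l_{\eta_i}$; your version sums all three cyclic terms to $l_{\eta_i}+l_{\eta_j}+l_{\eta_k}$ and loses only a factor $3$. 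The ``degenerate possibility'' you worry about at the end is already ruled out by either identity: all three cyclic hyperplanes attached to a single triangle belong to $\mathcal P$, so the arc realizing $\sup_\eta l_\eta(X)$ enters with a plus sign in two of the three expressions for any triangle having it as a side.

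There is, however, one genuine gap, in your treatment of the first summand $\tfrac12\sup_{\Delta}\log\bigl(L_{\Delta}(X_0)/L_{\Delta}(X)\bigr)$. You assert that it is ``bounded'' and justify this only by $L_{\Delta}(X)\ge 2\rho_0$, which bounds it from \emph{above}. For the half of the proposition asserting $d^{\Gamma}_h(X_0,X)\ge \tfrac12\sup_{\eta}\log l_{\eta}(X)-C$ you need this summand bounded from \emph{below}, i.e.\ you need some triangle with $L_{\Delta}(X)$ bounded above uniformly in $X$. A priori every $L_{\Delta}(X)$ could tend to $+\infty$ simultaneously, in which case the first summand tends to $-\infty$ and could cancel the linear growth of the second; your parenthetical remark addresses the wrong inequality and does not exclude this. (To be fair, the paper's own proof has the same soft spot: in the lower bound it silently discards the first supremum, which tacitly assumes it is nonnegative.) The missing ingredient is a uniform upper bound on $\min_{\Delta}L_{\Delta}(X)$. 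It can be extracted from the horocycle length budget: by~(\ref{eq:relation}) one has $l_{h^{ijk}_i}(X)=\exp(-L_{kij}(X)/2)$, and the horocyclic segments lying in the ideal triangles sum to a positive quantity --- all of $e^{-\rho_0}$ when $n=1$, so that some $L_{\Delta}(X)\le 2\rho_0+2\log(12g-6+3n-9)$; for $n\ge 2$ one must additionally check that the $n-1$ segments lying in the once--punctured discs cannot absorb the whole horocycle. Until that is supplied, the ``$\ge$'' half of the estimate is not established.
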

\begin{proof}
  Set $\Gamma=\{\eta_1,...,\eta_{6g-5+2n}\}$. By ~(\ref{eq:def2}),
   \begin{eqnarray*}
     {d}^{\Gamma}_h(X_1,X_2)&=&\frac{1}{2}(\sup_{\Delta_{ijk}\in \mathbb T_\Gamma}
     \log\frac{{l_{\eta_i}(X_1)-l_{\eta_j}(X_1)+l_{\eta_k}(X_1)}}
    {{l_{\eta_i}(X_2)-l_{\eta_j}(X_2)+l_{\eta_k}(X_2)}}\\
      &&
     +\sup_{\Delta_{ijk}\in \mathbb T_\Gamma}\log\frac{{l_{\eta_i}(X_2)
     -l_{\eta_j}(X_2)+l_{\eta_k}(X_2)}}{{l_{\eta_i}
      (X_1)-l_{\eta_j}(X_1)+l_{\eta_k}(X_1)}})\nonumber.
    \end{eqnarray*}
  Note that
  \begin{equation}\label{eq:sum}
  2l_{\eta_i}(X)= [{l_{\eta_i}(X)-l_{\eta_j}(X)+l_{\eta_k}(X)}]+
   [{l_{\eta_j}(X)-l_{\eta_k}(X)+l_{\eta_i}(X)}]
  \end{equation}
  for any ideal triangle $\Delta_{ijk}$,
   and
   $$
   {[{l_{\eta_i}(X)-l_{\eta_j}(X)+l_{\eta_k}(X)}]}\geq 2\rho_0,\text{ for any } \Delta_{ijk}.
   $$

  Therefore
  \begin{eqnarray*}
  d^{\Gamma}_h(X_0,X)
  &\leq& \frac{1}{2} [\log\frac{\sup_{\eta\in\Gamma}l_{\eta}(X)}{\rho_0}+
  \log\frac{\sup_{\eta\in\Gamma}l_{\eta}(X_0)}{\rho_0}]\\
  &=& \frac{1}{2}\sup_{\eta\in\Gamma}\log l_{\eta}(X)+\frac{1}{2}\sup_{\eta\in\Gamma}\log l_{\eta}(X_0)-\log \rho_0.
  \end{eqnarray*}
  Next we deal with the inverse inequality. Without loss of generality, we assume that $l_{\eta1}(X)=\sup_{\eta\in\Gamma}l_{\eta}(X)$ and that $\eta_1,\eta_2,\eta_3$ bounds an ideal triangle. By ~(\ref{eq:sum}), at least one of ${l_{\eta_1}(X)-l_{\eta_2}(X)+l_{\eta_3}(X)}$ and $
   {l_{\eta_2}(X)-l_{\eta_3}(X)+l_{\eta_1}(X)}$ is not less than $\sup_{\eta\in\Gamma}l_{\eta}(X)$. Then

  \begin{eqnarray*}
    d^{\Gamma}_h(X_0,X)
    &\geq& \frac {1}{2}[\log\frac{\sup_{\eta\in\Gamma}l_{\eta}(X)}{2\sup_{\eta\in\Gamma}l_{\eta}(X_0)}]\\
    &=&\frac{1}{2}\sup_{\eta\in\Gamma}\log l_{\eta}(X)-\frac{1}{2}[\log 2+\sup_{\eta\in\Gamma}l_{\eta}(X_0)].
  \end{eqnarray*}
  Set $C_{\Gamma,X_0,\rho_0}\triangleq \max\{\frac{1}{2}\sup_{\eta\in\Gamma}\log l_{\eta}(X_0)-\log \rho_0,\frac{1}{2}\sup_{\eta\in\Gamma}l_{\eta}(X_0)+\frac{1}{2}\log 2\}$, the proposition follows.
\end{proof}

\vskip 20pt
\section{Actions of Mapping class group }\label{sec:mcg-action}

The mapping class group $MCG(S_{g,n})$ consists of the isotopy classes of orientation-preserving self homeomorphisms of $S_{g,n}$. In this section, we study the actions of mapping class group $MCG(S_{g,n})$ on the metric space $(T_{g,n},d^{\Gamma}_h)$. The action is defined as following. For $g\in MCG(S_{g,n})$ and $(X,f)\in T_{g,n}$,  $g\circ(X,f)$ is defined as the  marked hyperbolic surface $(X,f\circ g^{-1})$.



 Denote by $PMCG(S_{g,n})$ the subgroup of $MCG(S_{g,n})$ consisting of elements that fix each puncture individually. It is well known that $PMCG(S_{g,n})$ can be generated by finitely many Dehn twists  about nonseparating simple closed curves, where a nonseparating simple  closed curve $\alpha$ is a closed curve such that $S_{g,n}\backslash \alpha$ is  connected (see \cite[Chap. 5]{Bu}).

The lemma below describes the changes of $l_{\eta_i}(X)$ under a Dehn twist.
\begin{lemma}\label{lem:Dehn-twist}
  Assume that $\Delta_{123}$ is an ideal triangle on $X\in T_{g,n}$ with three sides $\eta_1,\eta_2, \eta_3$, and that $g$ is a positive Dehn twist  of $S_{g,n}$ along an essential simple closed curve $\alpha$.
  Denote by $\Delta'_{123}$, $\eta'_1,\eta'_2, \eta'_3$, the images of $\Delta_{123}$ , $\eta_1,\eta_2, \eta_3$, respectively, under the action of $g$.
  Then, there is a constant $C$ depending on the length $l_\alpha(X)$, the reference height $\rho_0$ and the isotopy classes of $\alpha$,$\eta_1,\eta_2,\eta_3$   such that

  $$
  \frac{1}{C}\leq \frac{{l_{\eta'_{i+1}}(X)+l_{\eta'_{i-1}}(X)-l_{\eta'_i}(X)}}
  {l_{\eta_{i+1}}(X)+l_{\eta_{i-1}}(X)-l_{\eta_i}(X)}\leq C,\ i=1,2,3.
  $$
\end{lemma}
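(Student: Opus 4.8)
The plan is to realise the positive Dehn twist $g=T_\alpha$ as an earthquake of bounded amplitude, convert the resulting derivative estimate into an additive control on the length combinations, and finally turn that additive control into the desired two–sided multiplicative bound using the uniform lower bound $2\rho_0$ supplied by Proposition~\ref{prop:Hilbert}.

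First I would reduce the comparison of $l_{\eta_i'}(X)$ with $l_{\eta_i}(X)$ to the behaviour of $l_{\eta_i}$ along a single earthquake line. As recalled in the proof of Lemma~\ref{lem:estimate}, the metric $\mathcal E^{m l_\alpha(X)}_\alpha X$ is obtained from $X$ by $m$ Dehn twists along $\alpha$, so that $l_{\eta_i}(\mathcal E^{m l_\alpha(X)}_\alpha X)=l_{T^{-m}_\alpha\eta_i}(X)$. Taking $m=-1$ and recalling that $\eta_i'=g\eta_i=T_\alpha\eta_i$, this gives
\[
l_{\eta_i'}(X)=l_{\eta_i}\big(\mathcal E^{-l_\alpha(X)}_\alpha X\big),\qquad i=1,2,3,
\]
so $l_{\eta_i'}(X)$ is exactly the value of $l_{\eta_i}$ at the point of the earthquake line $\{\mathcal E^t_\alpha X\}$ with parameter $t=-l_\alpha(X)$.

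Next I would bound the variation of $l_{\eta_i}$ along this earthquake. By Lemma~\ref{lem:derivative}, $\frac{d}{dt}l_{\eta_i}(\mathcal E^t_\alpha X)=\int_{\eta_i}\cos\theta_t\,d\alpha$, and since $|\cos\theta_t|\le 1$ we get $\big|\frac{d}{dt}l_{\eta_i}(\mathcal E^t_\alpha X)\big|\le \int_{\eta_i}d\alpha=i(\alpha,\eta_i)$. Integrating over the interval of length $l_\alpha(X)$ yields
\[
|l_{\eta_i'}(X)-l_{\eta_i}(X)|\le l_\alpha(X)\,i(\alpha,\eta_i),\qquad i=1,2,3.
\]
Writing $D_i:=l_{\eta_{i+1}}(X)+l_{\eta_{i-1}}(X)-l_{\eta_i}(X)$ and $D_i'$ for the same combination of the primed lengths, the triangle inequality then gives $|D_i'-D_i|\le A$, where $A:=l_\alpha(X)\big(i(\alpha,\eta_1)+i(\alpha,\eta_2)+i(\alpha,\eta_3)\big)$ is a constant depending only on $l_\alpha(X)$ and the isotopy classes of $\alpha,\eta_1,\eta_2,\eta_3$.

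Finally I would convert this additive estimate into the multiplicative one. By~\eqref{eq:rho}, applied to the two ideal triangles $\Delta_{123}$ and $\Delta'_{123}=g\Delta_{123}$ (the latter being a face of the preferred triangulation $g\Gamma$, with its truncated arc lengths defined by the same height $\rho_0$), each of $D_i$ and $D_i'$ is at least $2\rho_0>0$. Hence $D_i'/D_i\le 1+A/D_i\le 1+A/(2\rho_0)$, and symmetrically $D_i/D_i'\le 1+A/(2\rho_0)$, so that setting
\[
C:=1+\frac{l_\alpha(X)\big(i(\alpha,\eta_1)+i(\alpha,\eta_2)+i(\alpha,\eta_3)\big)}{2\rho_0}
\]
gives $C^{-1}\le D_i'/D_i\le C$ for $i=1,2,3$, which is the assertion. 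The one step requiring care is the first: one must check that the horocyclic truncations defining $l_{\eta_i'}(X)$ and $l_{\eta_i}(\mathcal E^{-l_\alpha(X)}_\alpha X)$ correspond under the twist–earthquake identification, so that both measure the same compact subarc at the same height $\rho_0$. This matching is precisely the one established in the proof of Lemma~\ref{lem:estimate}, and since $g$ fixes the puncture $O$ the image $\eta_i'$ still has both ends in $O$, so its truncated length is defined in the same way; with this in place the remaining steps are elementary.
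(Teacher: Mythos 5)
Your proof is correct and takes essentially the same route as the paper: both arguments rest on the additive bound $|l_{\eta_i'}(X)-l_{\eta_i}(X)|\le i(\eta_i,\alpha)\,l_\alpha(X)$ combined with the uniform lower bound $2\rho_0$ from (\ref{eq:rho}) for both $\Delta_{123}$ and $\Delta'_{123}$, producing the same constant $C=1+\bigl[i(\eta_1,\alpha)+i(\eta_2,\alpha)+i(\eta_3,\alpha)\bigr]l_\alpha(X)/(2\rho_0)$. The only difference is that you derive the additive bound by identifying the twist with the earthquake point $t=-l_\alpha(X)$ and integrating the derivative formula of Lemma~\ref{lem:derivative}, whereas the paper states this inequality without further justification.
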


\begin{proof}
  Note that
  $$
  |l_{\eta'_i}(X)-l_{\eta_i}(X)|\leq i(\eta_i,\alpha)l_{\alpha}(X),\  i=1,2,3,
  $$
  $$
  i(\eta'_i,\alpha)=i(\eta_i,\alpha),\ i=1,2,3.
  $$
  On the other hand, from  ~(\ref{eq:rho}),
  $$
 {{l_{\eta'_{i+1}}(X)+l_{\eta'_{i-1}}(X)-l_{\eta'_i}(X)}}>2\rho_0,\ i=1,2,3;
  $$
  $${{l_{\eta_{i+1}}(X)+l_{\eta_{i-1}}(X)-l_{\eta_i}(X)}}>2\rho_0,  j=i-1,i,i+1.
  $$
  Hence
  \begin{eqnarray*}
  \frac{{l_{\eta'_{i+1}}(X)+l_{\eta'_{i-1}}(X)-l_{\eta'_i}(X)}}
  {l_{\eta_{i+1}}(X)+l_{\eta_{i-1}}(X)-l_{\eta_i}(X)}
  &\leq &
  1+\frac{[i(\eta_{i+1},\alpha)+i(\eta_i,\alpha)
  +i(\eta_{i-1},\alpha)]l_{\alpha}(X)}
  {l_{\eta_{i+1}}(X)+l_{\eta_{i-1}}(X)-l_{\eta_i}(X)}\\
  &\leq &
  1+\frac{[i(\eta_{i+1},\alpha)+i(\eta_i,\alpha)
  +i(\eta_{i-1},\alpha)]l_{\alpha}(X)}{2\rho_0}.
  \end{eqnarray*}
  Interchange $\eta_i$ with $\eta'_i$, we get the inverse inequality
  $$
   \frac{{l_{\eta_{i+1}}(X)+l_{\eta_{i-1}}(X)-l_{\eta_i}(X)}}
  {l_{\eta'_{i+1}}(X)+l_{\eta'_{i-1}}(X)-l_{\eta'_i}(X)}
  \leq
  1+\frac{[i(\eta_{i+1},\alpha)+i(\eta_i,\alpha)
  +i(\eta_{i-1},\alpha)]l_{\alpha}(X)}{2\rho_0}.
  $$
\end{proof}

As an application, we have the following.
\begin{corollary}\label{cor:Dehn-twist}
  Let $\Gamma$ be a preferred triangulation of $S_{g,n}$. Let $g$ be a  positive Dehn twist of $S_{g,n}$ along an essential simple closed curve $\alpha$. Set $M_{\alpha,l}\triangleq\{X\in T_{g,n}: l_{\alpha}(X)\leq l\}$.  Then there is a constant $C_{\Gamma,\alpha,\rho,l}$ depending on $\Gamma$, the isotopy class of $\alpha$ , the reference height $\rho_0$ and $l$ such that
  $$
  |d^{\Gamma}_{h}(X,Y)-d^{\Gamma}_{h}(gX,gY)|
  \leq  C_{\Gamma,\alpha,\rho_0,l},\text{ for any } X,Y\in M_{\alpha,l}.
  $$
\end{corollary}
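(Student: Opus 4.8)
The plan is to reduce the statement to the single-triangle estimate already established in Lemma~\ref{lem:Dehn-twist}, exploiting the expression (\ref{eq:def2}) of $d^\Gamma_h$ as a supremum over the finitely many ideal triangles $\Delta_{ijk}\in\mathbb T_\Gamma$ of the log-ratios of the quantities
\[
A_{ijk}(Z):=l_{\eta_i}(Z)-l_{\eta_j}(Z)+l_{\eta_k}(Z).
\]
First I would record the naturality identity $l_{\eta_i}(gZ)=l_{g^{-1}\eta_i}(Z)$, which follows from the definition $g\circ(X,f)=(X,f\circ g^{-1})$ of the action. In particular, since a Dehn twist fixes its own axis, $g^{-1}\alpha=\alpha$ and hence $l_\alpha(gZ)=l_\alpha(Z)$; thus $M_{\alpha,l}$ is $g$-invariant and we may freely pass between $Z$ and $gZ$. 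Consequently $A_{ijk}(gZ)$ is precisely the triangle-combination of the twisted arcs $g^{-1}\eta_i,g^{-1}\eta_j,g^{-1}\eta_k$ measured on the \emph{same} surface $Z$, which is exactly the situation of Lemma~\ref{lem:Dehn-twist} applied to the Dehn twist $g^{-1}$ (the proof of that lemma uses only $|l_{g^{-1}\eta_i}(Z)-l_{\eta_i}(Z)|\le i(\eta_i,\alpha)l_\alpha(Z)$ and $i(g^{-1}\eta_i,\alpha)=i(\eta_i,\alpha)$, both valid for $g^{-1}$).

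Second, I would manufacture a constant uniform over $M_{\alpha,l}$. Lemma~\ref{lem:Dehn-twist} yields, for each triangle, a constant $C_{ijk}$ with
\[
C_{ijk}^{-1}\le \frac{A_{ijk}(gZ)}{A_{ijk}(Z)}\le C_{ijk};
\]
the explicit bound from its proof, $C_{ijk}\le 1+\frac{(i(\eta_i,\alpha)+i(\eta_j,\alpha)+i(\eta_k,\alpha))\,l_\alpha(Z)}{2\rho_0}$, is monotone in $l_\alpha(Z)$, so on $M_{\alpha,l}$ it is dominated by the same expression with $l_\alpha(Z)$ replaced by $l$. Taking the maximum over the finitely many triangles of $\mathbb T_\Gamma$ produces a single constant $C=C(\Gamma,\alpha,\rho_0,l)$ satisfying $C^{-1}\le A_{ijk}(gZ)/A_{ijk}(Z)\le C$ for every $\Delta_{ijk}\in\mathbb T_\Gamma$ and every $Z\in M_{\alpha,l}$. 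The lower bound $A_{ijk}(Z)\ge 2\rho_0>0$ recorded in Proposition~\ref{prop:Hilbert} guarantees that all these ratios and logarithms are well defined.

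Third, I would assemble the distances. For $X,Y\in M_{\alpha,l}$ and any triangle,
\[
\left|\log\frac{A_{ijk}(gX)}{A_{ijk}(gY)}-\log\frac{A_{ijk}(X)}{A_{ijk}(Y)}\right|
=\left|\log\frac{A_{ijk}(gX)}{A_{ijk}(X)}-\log\frac{A_{ijk}(gY)}{A_{ijk}(Y)}\right|\le 2\log C.
\]
Since two families of reals that are pointwise $2\log C$-close have suprema that are $2\log C$-close, each of the two supremum terms in (\ref{eq:def2}) computing $d^\Gamma_h(gX,gY)$ differs from the corresponding term computing $d^\Gamma_h(X,Y)$ by at most $2\log C$; averaging the two gives $|d^\Gamma_h(gX,gY)-d^\Gamma_h(X,Y)|\le 2\log C$, so $C_{\Gamma,\alpha,\rho_0,l}:=2\log C$ is the desired constant.

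The main obstacle — indeed essentially the only point requiring care — is the uniformity asserted at the end of the second step: the constant furnished by Lemma~\ref{lem:Dehn-twist} depends a priori on the individual surface through $l_\alpha(Z)$ and on each separate triangle, and it is precisely the restriction to the sublevel set $M_{\alpha,l}$, combined with the finiteness of $\mathbb T_\Gamma$, that collapses this into one global constant. Everything else is the formal manipulation of suprema of log-ratios afforded by the form (\ref{eq:def2}) of the Hilbert metric.
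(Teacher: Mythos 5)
Your argument is correct and follows essentially the same route as the paper: both reduce the statement to the per-triangle multiplicative bound of Lemma~\ref{lem:Dehn-twist}, make the constant uniform on $M_{\alpha,l}$ by replacing $l_\alpha(Z)$ with $l$ (the paper takes $C_{\Gamma,\alpha,\rho_0}=[1+(l/\rho_0)\sum_i i(\eta_i,\alpha)]^2$ rather than a maximum over triangles), and then convert the multiplicative control of the ratios in (\ref{eq:def2}) into an additive bound on the suprema of log-ratios. Your write-up is in fact more careful than the paper's on the naturality identity $l_{\eta_i}(gZ)=l_{g^{-1}\eta_i}(Z)$ and the $g$-invariance of $M_{\alpha,l}$, which the paper leaves implicit.
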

\begin{proof}
  It follows from Lemma~\ref{lem:Dehn-twist} that
  \begin{eqnarray*}
  \frac{1}{C_{\Gamma,\alpha,\rho}}  \frac{{l_{\eta_{i+1}}(Y)+l_{\eta_{i-1}}(Y)-l_{\eta_i}(Y)}}
  {l_{\eta_{i+1}}(X)+l_{\eta_{i-1}}(X)-l_{\eta_i}(X)}
  &\leq&
  \frac{{l_{\eta_{i+1}}(gY)+l_{\eta_{i-1}}(gY)-l_{\eta_i}(gY)}}
  {l_{\eta_{i+1}}(gX)+l_{\eta_{i-1}}(gX)-l_{\eta_i}(gX)}\\
  &\leq&
  C_{\Gamma,\alpha,\rho_0}  \frac{{l_{\eta_{i+1}}(Y)+l_{\eta_{i-1}}(Y)-l_{\eta_i}(Y)}}
  {l_{\eta_{i+1}}(X)+l_{\eta_{i-1}}(X)-l_{\eta_i}(X)},
  \end{eqnarray*}
  where $C_{\Gamma,\alpha,\rho_0}=[1+(l/\rho_0)\Sigma_{i=1}^{6g-5+2n}
  i(\eta_i,\alpha)][1+(l/\rho_0)\Sigma_{i=1}^{6g-5+2n}
  i(\eta_i,\alpha)].$

\end{proof}

We do not know whether or not the action of $MCG(S_{g,n})$ on $(T_{g,n},d^\Gamma_h)$ is quasi-isometric. But for any given $X,Y\in T_{g,n}$, we have the following asymptotic behaviour.
\begin{proposition}\label{prop:aymptotic}
  Let $\Gamma$ be a preferred triangulation, and $g\in MCG(S_{g,n})$ be a positive Dehn twist about a simple closed curve $\alpha$. For any given $X,Y\in T_{g,n}$, there is a positive number $C_{X,Y}$ depending on $X,Y$ such that
  $$ \lim_{n\to\infty}d^\Gamma_h(g^nX,g^nY)=C_{X,Y}.$$
  Moreover,  for any $X\in T_{g,n}$,
  $$ \lim_{n\to\infty}d^\Gamma_h(g^nX,g^{n+1}X)=0.$$
\end{proposition}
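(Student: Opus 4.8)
The plan is to understand the asymptotic behavior of $l_{\eta_i}(g^n X)$ as $n \to \infty$, using the crucial observation that a positive Dehn twist $g$ along $\alpha$ acts on the length coordinates in essentially the same way that an earthquake does at integer multiples of $l_\alpha$. Indeed, $g^n X = T^n_\alpha X$, and for each arc $\eta_i$ we have the combinatorial estimate that $l_{\eta_i}(g^n X)$ grows linearly with slope $i(\eta_i, \alpha)$ up to a bounded error. More precisely, I would invoke the same argument used in the proof of Lemma~\ref{lem:estimate} (specifically inequality~(\ref{eq:simple}) and the isotopy-of-closed-curves decomposition shown in Fig.~\ref{fig:closed-curves}) to establish that
\begin{equation*}
  |l_{\eta_i}(g^n X) - n\, i(\eta_i,\alpha)\, l_\alpha(X)| \leq l_{\eta_i}(X) + e^{-\rho_0},
\end{equation*}
so that $l_{\eta_i}(g^n X) = n\, i(\eta_i,\alpha)\, l_\alpha(X) + O(1)$ with a bounded, eventually monotone error term. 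This is precisely the discrete version of the earthquake estimate, with $t$ replaced by $n l_\alpha(X)$.

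First, I would fix the ideal triangles $\Delta_{ijk} \in \mathbb T_\Gamma$ and partition them into the same Type A / Type B / Type C classes as in the Main Theorem, according to which of $i(\eta_i,\alpha), i(\eta_j,\alpha), i(\eta_k,\alpha)$ vanish. For each triangle, the quantity $l_{\eta_i}(g^n X) - l_{\eta_j}(g^n X) + l_{\eta_k}(g^n X)$ is either asymptotically constant (when $i(\eta_i,\alpha) - i(\eta_j,\alpha) + i(\eta_k,\alpha) = 0$) or grows linearly in $n$ (when this combination is positive). The key point for the first limit is that in the ratio defining $d^\Gamma_h(g^n X, g^n Y)$, the leading linear terms $n[i(\eta_i,\alpha)-i(\eta_j,\alpha)+i(\eta_k,\alpha)]l_\alpha(X)$ and $n[\cdots]l_\alpha(Y)$ appear in both numerator and denominator. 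Since $l_\alpha(X)$ and $l_\alpha(Y)$ are fixed, for the linearly-growing triangles the ratio
\begin{equation*}
  \frac{l_{\eta_i}(g^n X)-l_{\eta_j}(g^n X)+l_{\eta_k}(g^n X)}{l_{\eta_i}(g^n Y)-l_{\eta_j}(g^n Y)+l_{\eta_k}(g^n Y)}
\end{equation*}
converges to $l_\alpha(X)/l_\alpha(Y)$, a constant independent of the triangle; while for the bounded triangles each ratio converges to its own finite limit using the existence of the limits $c^\pm_i$ from Lemma~\ref{lem:estimate} (adapted to the discrete setting). Taking $\sup$ over both families and feeding them into formula~(\ref{eq:def2}) yields a well-defined finite limit $C_{X,Y}$.

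For the second statement, I would apply the first with $Y = gX$, noting that $d^\Gamma_h(g^n X, g^{n+1} X) = d^\Gamma_h(g^n X, g^n(gX))$, so the limit exists and equals $C_{X, gX}$; the claim is that this constant is $0$. The reason is that $g X$ and $X$ have the same length function along $\alpha$, i.e. $l_\alpha(gX) = l_\alpha(X)$, so the leading linear slopes for $g^n X$ and $g^{n+1}X = g^n(gX)$ are identical, forcing every Type C (linearly growing) ratio to tend to exactly $1$; and for the bounded-combination triangles, shifting $n$ by one shifts the argument of the eventually-monotone bounded sequences by a single step, whose difference vanishes as $n \to \infty$. Hence both suprema in~(\ref{eq:def2}) tend to $\log 1 = 0$.

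The main obstacle I anticipate is the careful bookkeeping in the Type C, balanced case (where $i(\eta_i,\alpha)-i(\eta_j,\alpha)+i(\eta_k,\alpha)=0$): here the linear terms cancel and one must control the ratio of the bounded remainders $f_i(nl_\alpha) - f_j(nl_\alpha) + f_k(nl_\alpha)$, which stays bounded away from $0$ thanks to~(\ref{eq:rho}) but requires the convergence of the discrete analogues of $c^\pm_i$ to guarantee a genuine limit rather than mere boundedness. Establishing this convergence rigorously — that $l_{\eta_i}(g^n X) - n\,i(\eta_i,\alpha)\,l_\alpha(X)$ actually converges as $n \to \infty$ and not just remains bounded — is the technical heart, and it follows from the monotonicity and boundedness exactly as in the final paragraph of the proof of Lemma~\ref{lem:estimate}.
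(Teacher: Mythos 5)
Your proposal is correct and takes essentially the same route as the paper: the paper's entire proof is the observation that $g=\mathcal E^{l_\alpha(X)}_\alpha$, so $g^nX$ traces the earthquake ray at times $t=nl_\alpha(X)$, followed by ``the remaining discussion is similar to the proof of the Main Theorem.'' You have simply filled in the Type A/B/C bookkeeping (including the limits $l_\alpha(X)/l_\alpha(Y)$ for the linearly growing triangles and the ratio $(n+1)/n\to 1$ for the second limit) that the paper leaves implicit.
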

\begin{proof}
  Note that a Dehn twist is also an earthquake map, i.e. $g=\mathcal E^{l_\alpha (X)}_\alpha $. The remaining discussion is similar to the proof of the Main Theorem.
\end{proof}

It follows immediately from Proposition \ref{prop:aymptotic} that  $(T_{g,n},d_h^\Gamma)$ is not $MCG(S_{g,n})$ invariant. More precisely, we have the following corollary.
\begin{corollary}\label{cor:nonisometry}
Let $\Gamma$ be a preferred triangulation, and $g\in MCG(S_{g,n})$ be a positive Dehn twist about a simple closed curve $\alpha$. Then the action of $g$  on $T_{g,n}$ is not isometric. In particular,  $(T_{g,n},d_h^\Gamma)$ is not $MCG(S_{g,n})$ invariant.
\end{corollary}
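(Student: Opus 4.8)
The plan is to argue by contradiction and to read off the failure of isometry directly from the asymptotic behaviour recorded in Proposition~\ref{prop:aymptotic}. Suppose that the positive Dehn twist $g$ acts as an isometry of $(T_{g,n},d^{\Gamma}_h)$. Then every power $g^n$ is an isometry as well, so for each $X\in T_{g,n}$ and each $n\geq 0$,
$$ d^{\Gamma}_h(g^nX,g^{n+1}X)=d^{\Gamma}_h\big(g^nX,g^n(gX)\big)=d^{\Gamma}_h(X,gX). $$
In particular the sequence $n\mapsto d^{\Gamma}_h(g^nX,g^{n+1}X)$ would be constant, equal to $d^{\Gamma}_h(X,gX)$. (Note that the first limit in Proposition~\ref{prop:aymptotic} is not by itself enough: under the isometry hypothesis it only yields $C_{X,Y}=d^{\Gamma}_h(X,Y)$, which is consistent. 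It is the second limit that is decisive.)

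Next I would invoke the second assertion of Proposition~\ref{prop:aymptotic}, namely $\lim_{n\to\infty}d^{\Gamma}_h(g^nX,g^{n+1}X)=0$. Comparing this with the constant value just computed forces $d^{\Gamma}_h(X,gX)=0$ for every $X\in T_{g,n}$. To turn this into $gX=X$, I would record that $d^{\Gamma}_h$ is a genuine metric on $T_{g,n}$: by definition $d^{\Gamma}_h(X,Y)=d_h(\Lambda(X),\Lambda(Y))$, where $d_h$ is Birkhoff's version of the Hilbert metric, which is a metric on the projective quotient $\mathcal C/\mathbb R^+$, and by Theorem~\ref{thm:Teich} the map $\Pi\circ\Lambda$ is injective. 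Hence $d^{\Gamma}_h(X,Y)=0$ if and only if $[\Lambda(X)]=[\Lambda(Y)]$, i.e.\ if and only if $X=Y$. Consequently $gX=X$ for every $X\in T_{g,n}$.

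This is the contradiction I am after. A positive Dehn twist about an essential simple closed curve is an element of infinite order in $MCG(S_{g,n})$, whereas the stabilizer of any point $X\in T_{g,n}$ under the mapping class group action is finite (it is isomorphic to the isometry group of the hyperbolic surface $X$). Thus a single fixed point $gX=X$ already places $g$ in a finite stabilizer, contradicting that $g$ has infinite order; a fortiori $gX=X$ cannot hold for all $X$. Therefore $g$ is not an isometry, and since $g\in MCG(S_{g,n})$ this shows at once that $d^{\Gamma}_h$ is not invariant under the whole mapping class group, giving the \lq\lq in particular\rq\rq{} clause. The only inputs beyond Proposition~\ref{prop:aymptotic} are the point-separation of $d^{\Gamma}_h$ and the infinite order of Dehn twists (equivalently, finiteness of point stabilizers); the computational heart of the argument — converting the limit $0$ into $d^{\Gamma}_h(X,gX)=0$ — is immediate once the isometry hypothesis is assumed, so I expect the only place demanding care is the clean justification that $d^{\Gamma}_h$ separates points, which follows from the injectivity of $\Pi\circ\Lambda$ in Theorem~\ref{thm:Teich}.
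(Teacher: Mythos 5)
Your argument is correct and is exactly the route the paper intends: the paper derives the corollary ``immediately'' from Proposition~\ref{prop:aymptotic}, and the intended deduction is precisely your contradiction — if $g$ were an isometry then $d^{\Gamma}_h(g^nX,g^{n+1}X)=d^{\Gamma}_h(X,gX)$ for all $n$, while the second limit forces this value to be $0$, contradicting that $d^{\Gamma}_h$ separates points (via injectivity of $\Pi\circ\Lambda$ in Theorem~\ref{thm:Teich}) and that a Dehn twist, having infinite order, cannot lie in a finite point stabilizer. You have merely spelled out the details the paper leaves implicit, including the correct observation that the first limit alone is inconclusive.
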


\end{document}